\newtheorem{theorem}{Theorem}
\theoremstyle{plain}
\newtheorem{acknowledgement}{Acknowledgement}
\newtheorem{axiom}{Axiom}
\newtheorem{conjecture}{Conjecture}
\newtheorem{corollary}{Corollary}
\newtheorem{definition}{Definition}
\newtheorem{example}{Example}
\newtheorem{exercise}{Exercise}
\newtheorem{lemma}{Lemma}
\newtheorem{proposition}{Proposition}
\newtheorem{remark}{Remark}
\numberwithin{equation}{section}
\chardef\@x10\chardef\@xv60
\def\tcitime{
\def\@time{%
  \@minute\time\@hour\@minute\divide\@hour\@xv
  \ifnum\@hour<\@x 0\fi\the\@hour:%
  \multiply\@hour\@xv\advance\@minute-\@hour
  \ifnum\@minute<\@x 0\fi\the\@minute
  }}%
\def\x@hyperref#1#2#3{%
   \catcode`\~ = 12
   \catcode`\$ = 12
   \catcode`\_ = 12
   \catcode`\# = 12
   \catcode`\& = 12
   \y@hyperref{#1}{#2}{#3}%
}
\def\y@hyperref#1#2#3#4{%
   #2\ref{#4}#3
   \catcode`\~ = 13
   \catcode`\$ = 3
   \catcode`\_ = 8
   \catcode`\# = 6
   \catcode`\& = 4
}
\def\QCTOpt[#1]#2{%
  \def\QCTOptB{#1}
  \def\QCTOptA{#2}
}
\def\QCTNOpt#1{%
  \def\QCTOptA{#1}
  \let\QCTOptB\empty
}
\def\Qct{%
  \@ifnextchar[{%
    \QCTOpt}{\QCTNOpt}
}
\def\QCBOpt[#1]#2{%
  \def\QCBOptB{#1}%
  \def\QCBOptA{#2}%
}
\def\QCBNOpt#1{%
  \def\QCBOptA{#1}%
  \let\QCBOptB\empty
}
\def\Qcb{%
  \@ifnextchar[{%
    \QCBOpt}{\QCBNOpt}%
}
\def\PrepCapArgs{%
  \ifx\QCBOptA\empty
    \ifx\QCTOptA\empty
      {}%
    \else
      \ifx\QCTOptB\empty
        {\QCTOptA}%
      \else
        [\QCTOptB]{\QCTOptA}%
      \fi
    \fi
  \else
    \ifx\QCBOptA\empty
      {}%
    \else
      \ifx\QCBOptB\empty
        {\QCBOptA}%
      \else
        [\QCBOptB]{\QCBOptA}%
      \fi
    \fi
  \fi
}
\def\GRAPHICSPS#1{%
 \ifcase\GRAPHICSTYPE
   \special{ps: #1}%
 \or
   \special{language "PS", include "#1"}%
 \fi
}%
\def\graffile#1#2#3#4{%
    \bgroup
       \@inlabelfalse
       \leavevmode
       \@ifundefined{bbl@deactivate}{\def~{\string~}}{\activesoff}%
        \raise -#4 \BOXTHEFRAME{%
           \hbox to #2{\raise #3\hbox to #2{\null #1\hfil}}}%
    \egroup
}%
\def\draftbox#1#2#3#4{%
 \leavevmode\raise -#4 \hbox{%
  \frame{\rlap{\protect\tiny #1}\hbox to #2%
   {\vrule height#3 width\z@ depth\z@\hfil}%
  }%
 }%
}%
\let\nographics=\@msidraft
\newif\ifwasdraft
\def\GRAPHIC#1#2#3#4#5{%
   \ifnum\@msidraft=\@ne\draftbox{#2}{#3}{#4}{#5}%
   \else\graffile{#1}{#3}{#4}{#5}%
   \fi
}
\def\addtoLaTeXparams#1{%
    \edef\LaTeXparams{\LaTeXparams #1}}%
\newif\ifBoxFrame \BoxFramefalse
\newif\ifOverFrame \OverFramefalse
\newif\ifUnderFrame \UnderFramefalse
\def\BOXTHEFRAME#1{%
   \hbox{%
      \ifBoxFrame
         \frame{#1}%
      \else
         {#1}%
      \fi
   }%
}
\def\doFRAMEparams#1{\BoxFramefalse\OverFramefalse\UnderFramefalse\readFRAMEparams#1\end}%
\def\readFRAMEparams#1{%
 \ifx#1\end%
  \let\next=\relax
  \else
  \ifx#1i\dispkind=\z@\fi
  \ifx#1d\dispkind=\@ne\fi
  \ifx#1f\dispkind=\tw@\fi
  \ifx#1t\addtoLaTeXparams{t}\fi
  \ifx#1b\addtoLaTeXparams{b}\fi
  \ifx#1p\addtoLaTeXparams{p}\fi
  \ifx#1h\addtoLaTeXparams{h}\fi
  \ifx#1X\BoxFrametrue\fi
  \ifx#1O\OverFrametrue\fi
  \ifx#1U\UnderFrametrue\fi
  \ifx#1w
    \ifnum\@msidraft=1\wasdrafttrue\else\wasdraftfalse\fi
    \@msidraft=\@ne
  \fi
  \let\next=\readFRAMEparams
  \fi
 \next
 }%
\def\IFRAME#1#2#3#4#5#6{%
      \bgroup
      \let\QCTOptA\empty
      \let\QCTOptB\empty
      \let\QCBOptA\empty
      \let\QCBOptB\empty
      #6%
      \parindent=0pt
      \leftskip=0pt
      \rightskip=0pt
      \setbox0=\hbox{\QCBOptA}%
      \@tempdima=#1\relax
      \ifOverFrame
          \typeout{This is not implemented yet}%
          \show\HELP
      \else
         \ifdim\wd0>\@tempdima
            \advance\@tempdima by \@tempdima
            \ifdim\wd0 >\@tempdima
               \setbox1 =\vbox{%
                  \unskip\hbox to \@tempdima{\hfill\GRAPHIC{#5}{#4}{#1}{#2}{#3}\hfill}%
                  \unskip\hbox to \@tempdima{\parbox[b]{\@tempdima}{\QCBOptA}}%
               }%
               \wd1=\@tempdima
            \else
               \textwidth=\wd0
               \setbox1 =\vbox{%
                 \noindent\hbox to \wd0{\hfill\GRAPHIC{#5}{#4}{#1}{#2}{#3}\hfill}\\%
                 \noindent\hbox{\QCBOptA}%
               }%
               \wd1=\wd0
            \fi
         \else
            \ifdim\wd0>0pt
              \hsize=\@tempdima
              \setbox1=\vbox{%
                \unskip\GRAPHIC{#5}{#4}{#1}{#2}{0pt}%
                \break
                \unskip\hbox to \@tempdima{\hfill \QCBOptA\hfill}%
              }%
              \wd1=\@tempdima
           \else
              \hsize=\@tempdima
              \setbox1=\vbox{%
                \unskip\GRAPHIC{#5}{#4}{#1}{#2}{0pt}%
              }%
              \wd1=\@tempdima
           \fi
         \fi
         \@tempdimb=\ht1
         \advance\@tempdimb by -#2
         \advance\@tempdimb by #3
         \leavevmode
         \raise -\@tempdimb \hbox{\box1}%
      \fi
      \egroup%
}%
\def\DFRAME#1#2#3#4#5{%
  \hfil\break
  \bgroup
     \leftskip\@flushglue
     \rightskip\@flushglue
     \parindent\z@
     \parfillskip\z@skip
     \let\QCTOptA\empty
     \let\QCTOptB\empty
     \let\QCBOptA\empty
     \let\QCBOptB\empty
     \vbox\bgroup
        \ifOverFrame
           #5\QCTOptA\par
        \fi
        \GRAPHIC{#4}{#3}{#1}{#2}{\z@}%
        \ifUnderFrame
           \break#5\QCBOptA
        \fi
     \egroup
   \egroup
   \break
}%
\def\FFRAME#1#2#3#4#5#6#7{%
  \@ifundefined{floatstyle}
    {
     \begin{figure}[#1]%
    }
    {
     \ifx#1h
      \begin{figure}[H]%
     \else
      \begin{figure}[#1]%
     \fi
    }
  \let\QCTOptA\empty
  \let\QCTOptB\empty
  \let\QCBOptA\empty
  \let\QCBOptB\empty
  \ifOverFrame
    #4
    \ifx\QCTOptA\empty
    \else
      \ifx\QCTOptB\empty
        \caption{\QCTOptA}%
      \else
        \caption[\QCTOptB]{\QCTOptA}%
      \fi
    \fi
    \ifUnderFrame\else
      \label{#5}%
    \fi
  \else
    \UnderFrametrue%
  \fi
  \begin{center}\GRAPHIC{#7}{#6}{#2}{#3}{\z@}\end{center}%
  \ifUnderFrame
    #4
    \ifx\QCBOptA\empty
      \caption{}%
    \else
      \ifx\QCBOptB\empty
        \caption{\QCBOptA}%
      \else
        \caption[\QCBOptB]{\QCBOptA}%
      \fi
    \fi
    \label{#5}%
  \fi
  \end{figure}%
 }%
\def\makeactives{
  \catcode`\"=\active
  \catcode`\;=\active
  \catcode`\:=\active
  \catcode`\'=\active
  \catcode`\~=\active
}
   \gdef\activesoff{%
      \def"{\string"}%
      \def;{\string;}%
      \def:{\string:}%
      \def'{\string'}%
      \def~{\string~}%
    }
\def\FRAME#1#2#3#4#5#6#7#8{%
 \bgroup
 \ifnum\@msidraft=\@ne
   \wasdrafttrue
 \else
   \wasdraftfalse%
 \fi
 \def\LaTeXparams{}%
 \dispkind=\z@
 \def\LaTeXparams{}%
 \doFRAMEparams{#1}%
 \ifnum\dispkind=\z@\IFRAME{#2}{#3}{#4}{#7}{#8}{#5}\else
  \ifnum\dispkind=\@ne\DFRAME{#2}{#3}{#7}{#8}{#5}\else
   \ifnum\dispkind=\tw@
    \edef\@tempa{\noexpand\FFRAME{\LaTeXparams}}%
    \@tempa{#2}{#3}{#5}{#6}{#7}{#8}%
    \fi
   \fi
  \fi
  \ifwasdraft\@msidraft=1\else\@msidraft=0\fi{}%
  \egroup
 }%
\def\TEXUX#1{"texux"}
\long\def\QQQ#1#2{%
     \long\expandafter\def\csname#1\endcsname{#2}}%
\long\def\QQA#1#2{}%
\def\QTR#1#2{{\csname#1\endcsname {#2}}}%
\def\EXPAND#1[#2]#3{}%
\def\NOEXPAND#1[#2]#3{}%
\def\LaTeXparent#1{}%
\def\ChildStyles#1{}%
\def\ChildDefaults#1{}%
\def\QTagDef#1#2#3{}%
  \providecommand{\UNICODE}[2][]{\protect\rule{.1in}{.1in}}
  \providecommand{\U}[1]{\protect\rule{.1in}{.1in}}
\def\QQfnmark#1{\footnotemark}
 \def\abstract{%
  \if@twocolumn
   \section*{Abstract (Not appropriate in this style!)}%
   \else \small
   \begin{center}{\bf Abstract\vspace{-.5em}\vspace{\z@}}\end{center}%
   \quotation
   \fi
  }%
   \def\registered{\relax\ifmmode{}\r@gistered
                    \else$\m@th\r@gistered$\fi}%
 \def\r@gistered{^{\ooalign
  {\hfil\raise.07ex\hbox{$\scriptstyle\rm\text{R}$}\hfil\crcr
  \mathhexbox20D}}}}{}%
\newdimen\theight
\def\newfmtname{LaTeX2e}
  \DeclareOldFontCommand{\rm}{\normalfont\rmfamily}{\mathrm}
  \DeclareOldFontCommand{\sf}{\normalfont\sffamily}{\mathsf}
  \DeclareOldFontCommand{\tt}{\normalfont\ttfamily}{\mathtt}
  \DeclareOldFontCommand{\bf}{\normalfont\bfseries}{\mathbf}
  \DeclareOldFontCommand{\it}{\normalfont\itshape}{\mathit}
  \DeclareOldFontCommand{\sl}{\normalfont\slshape}{\@nomath\sl}
  \DeclareOldFontCommand{\sc}{\normalfont\scshape}{\@nomath\sc}
\def\alpha{{\Greekmath 010B}}%
\def\beta{{\Greekmath 010C}}%
\def\gamma{{\Greekmath 010D}}%
\def\delta{{\Greekmath 010E}}%
\def\epsilon{{\Greekmath 010F}}%
\def\zeta{{\Greekmath 0110}}%
\def\eta{{\Greekmath 0111}}%
\def\theta{{\Greekmath 0112}}%
\def\iota{{\Greekmath 0113}}%
\def\kappa{{\Greekmath 0114}}%
\def\lambda{{\Greekmath 0115}}%
\def\mu{{\Greekmath 0116}}%
\def\nu{{\Greekmath 0117}}%
\def\xi{{\Greekmath 0118}}%
\def\pi{{\Greekmath 0119}}%
\def\rho{{\Greekmath 011A}}%
\def\sigma{{\Greekmath 011B}}%
\def\tau{{\Greekmath 011C}}%
\def\upsilon{{\Greekmath 011D}}%
\def\phi{{\Greekmath 011E}}%
\def\chi{{\Greekmath 011F}}%
\def\psi{{\Greekmath 0120}}%
\def\omega{{\Greekmath 0121}}%
\def\varepsilon{{\Greekmath 0122}}%
\def\vartheta{{\Greekmath 0123}}%
\def\varpi{{\Greekmath 0124}}%
\def\varrho{{\Greekmath 0125}}%
\def\varsigma{{\Greekmath 0126}}%
\def\varphi{{\Greekmath 0127}}%
\def\nabla{{\Greekmath 0272}}
\def\FindBoldGroup{%
   {\setbox0=\hbox{$\mathbf{x\global\edef\theboldgroup{\the\mathgroup}}$}}%
}
\def\Greekmath#1#2#3#4{%
    \if@compatibility
        \ifnum\mathgroup=\symbold
           \mathchoice{\mbox{\boldmath$\displaystyle\mathchar"#1#2#3#4$}}%
                      {\mbox{\boldmath$\textstyle\mathchar"#1#2#3#4$}}%
                      {\mbox{\boldmath$\scriptstyle\mathchar"#1#2#3#4$}}%
                      {\mbox{\boldmath$\scriptscriptstyle\mathchar"#1#2#3#4$}}%
        \else
           \mathchar"#1#2#3#4%
        \fi
    \else
        \FindBoldGroup
        \ifnum\mathgroup=\theboldgroup 
           \mathchoice{\mbox{\boldmath$\displaystyle\mathchar"#1#2#3#4$}}%
                      {\mbox{\boldmath$\textstyle\mathchar"#1#2#3#4$}}%
                      {\mbox{\boldmath$\scriptstyle\mathchar"#1#2#3#4$}}%
                      {\mbox{\boldmath$\scriptscriptstyle\mathchar"#1#2#3#4$}}%
        \else
           \mathchar"#1#2#3#4%
        \fi
      \fi}
\newif\ifGreekBold  \GreekBoldfalse
\let\SAVEPBF=\pbf
\def\pbf{\GreekBoldtrue\SAVEPBF}%
  \newcounter{equationnumber}
  \def\mathletters{%
     \addtocounter{equation}{1}
     \edef\@currentlabel{\theequation}%
     \setcounter{equationnumber}{\c@equation}
     \setcounter{equation}{0}%
     \edef\theequation{\@currentlabel\noexpand\alph{equation}}%
  }
    \def\BibTeX{{\rm B\kern-.05em{\sc i\kern-.025em b}\kern-.08em
                 T\kern-.1667em\lower.7ex\hbox{E}\kern-.125emX}}}{}%
\def\AmS{{\protect\usefont{OMS}{cmsy}{m}{n}%
                A\kern-.1667em\lower.5ex\hbox{M}\kern-.125emS}}}{}%
\def\@@eqncr{\let\@tempa\relax
    \ifcase\@eqcnt \def\@tempa{& & &}\or \def\@tempa{& &}%
      \else \def\@tempa{&}\fi
     \@tempa
     \if@eqnsw
        \iftag@
           \@taggnum
        \else
           \@eqnnum\stepcounter{equation}%
        \fi
     \fi
     \global\tag@false
     \global\@eqnswtrue
     \global\@eqcnt\z@\cr}
\def\TCItag{\@ifnextchar*{\@TCItagstar}{\@TCItag}}
\def\@TCItag#1{%
    \global\tag@true
    \global\def\@taggnum{(#1)}}
\def\@TCItagstar*#1{%
    \global\tag@true
    \global\def\@taggnum{#1}}
\def\ExitTCILatex{\makeatother }
\let\DOTSI\relax
\def\RIfM@{\relax\ifmmode}%
\def\FN@{\futurelet\next}%
\def\iint{\DOTSI\intno@\tw@\FN@\ints@}%
\def\iiint{\DOTSI\intno@\thr@@\FN@\ints@}%
\def\iiiint{\DOTSI\intno@4 \FN@\ints@}%
\def\idotsint{\DOTSI\intno@\z@\FN@\ints@}%
\def\ints@{\findlimits@\ints@@}%
\newif\iflimtoken@
\newif\iflimits@
\def\findlimits@{\limtoken@true\ifx\next\limits\limits@true
 \else\ifx\next\nolimits\limits@false\else
 \limtoken@false\ifx\ilimits@\nolimits\limits@false\else
 \ifinner\limits@false\else\limits@true\fi\fi\fi\fi}%
\def\multint@{\int\ifnum\intno@=\z@\intdots@                          
 \else\intkern@\fi                                                    
 \ifnum\intno@>\tw@\int\intkern@\fi                                   
 \ifnum\intno@>\thr@@\int\intkern@\fi                                 
 \int}
\def\multintlimits@{\intop\ifnum\intno@=\z@\intdots@\else\intkern@\fi
 \ifnum\intno@>\tw@\intop\intkern@\fi
 \ifnum\intno@>\thr@@\intop\intkern@\fi\intop}%
\def\intic@{%
    \mathchoice{\hskip.5em}{\hskip.4em}{\hskip.4em}{\hskip.4em}}%
\def\negintic@{\mathchoice
 {\hskip-.5em}{\hskip-.4em}{\hskip-.4em}{\hskip-.4em}}%
\def\ints@@{\iflimtoken@                                              
 \def\ints@@@{\iflimits@\negintic@
   \mathop{\intic@\multintlimits@}\limits                             
  \else\multint@\nolimits\fi                                          
  \eat@}
 \else                                                                
 \def\ints@@@{\iflimits@\negintic@
  \mathop{\intic@\multintlimits@}\limits\else
  \multint@\nolimits\fi}\fi\ints@@@}%
\def\intkern@{\mathchoice{\!\!\!}{\!\!}{\!\!}{\!\!}}%
\def\plaincdots@{\mathinner{\cdotp\cdotp\cdotp}}%
\def\intdots@{\mathchoice{\plaincdots@}%
 {{\cdotp}\mkern1.5mu{\cdotp}\mkern1.5mu{\cdotp}}%
 {{\cdotp}\mkern1mu{\cdotp}\mkern1mu{\cdotp}}%
 {{\cdotp}\mkern1mu{\cdotp}\mkern1mu{\cdotp}}}%
\def\RIfM@{\relax\protect\ifmmode}
\def\text{\RIfM@\expandafter\text@\else\expandafter\mbox\fi}
\let\nfss@text\text
\def\text@#1{\mathchoice
   {\textdef@\displaystyle\f@size{#1}}%
   {\textdef@\textstyle\tf@size{\firstchoice@false #1}}%
   {\textdef@\textstyle\sf@size{\firstchoice@false #1}}%
   {\textdef@\textstyle \ssf@size{\firstchoice@false #1}}%
   \glb@settings}
\def\textdef@#1#2#3{\hbox{{%
                    \everymath{#1}%
                    \let\f@size#2\selectfont
                    #3}}}
\newif\iffirstchoice@
\def\Let@{\relax\iffalse{\fi\let\\=\cr\iffalse}\fi}%
\def\vspace@{\def\vspace##1{\crcr\noalign{\vskip##1\relax}}}%
\def\multilimits@{\bgroup\vspace@\Let@
 \baselineskip\fontdimen10 \scriptfont\tw@
 \advance\baselineskip\fontdimen12 \scriptfont\tw@
 \lineskip\thr@@\fontdimen8 \scriptfont\thr@@
 \lineskiplimit\lineskip
 \vbox\bgroup\ialign\bgroup\hfil$\m@th\scriptstyle{##}$\hfil\crcr}%
\def\Sb{_\multilimits@}%
\def\endSb{\crcr\egroup\egroup\egroup}%
\def\Sp{^\multilimits@}%
\newdimen\ex@
\def\rightarrowfill@#1{$#1\m@th\mathord-\mkern-6mu\cleaders
 \hbox{$#1\mkern-2mu\mathord-\mkern-2mu$}\hfill
 \mkern-6mu\mathord\rightarrow$}%
\def\leftarrowfill@#1{$#1\m@th\mathord\leftarrow\mkern-6mu\cleaders
 \hbox{$#1\mkern-2mu\mathord-\mkern-2mu$}\hfill\mkern-6mu\mathord-$}%
\def\leftrightarrowfill@#1{$#1\m@th\mathord\leftarrow
\mkern-6mu\cleaders
 \hbox{$#1\mkern-2mu\mathord-\mkern-2mu$}\hfill
 \mkern-6mu\mathord\rightarrow$}%
\def\overrightarrow{\mathpalette\overrightarrow@}%
\def\overrightarrow@#1#2{\vbox{\ialign{##\crcr\rightarrowfill@#1\crcr
 \noalign{\kern-\ex@\nointerlineskip}$\m@th\hfil#1#2\hfil$\crcr}}}%
\def\overleftarrow{\mathpalette\overleftarrow@}%
\def\overleftarrow@#1#2{\vbox{\ialign{##\crcr\leftarrowfill@#1\crcr
 \noalign{\kern-\ex@\nointerlineskip}$\m@th\hfil#1#2\hfil$\crcr}}}%
\def\overleftrightarrow{\mathpalette\overleftrightarrow@}%
\def\overleftrightarrow@#1#2{\vbox{\ialign{##\crcr
   \leftrightarrowfill@#1\crcr
 \noalign{\kern-\ex@\nointerlineskip}$\m@th\hfil#1#2\hfil$\crcr}}}%
\def\underrightarrow{\mathpalette\underrightarrow@}%
\def\underrightarrow@#1#2{\vtop{\ialign{##\crcr$\m@th\hfil#1#2\hfil
  $\crcr\noalign{\nointerlineskip}\rightarrowfill@#1\crcr}}}%
\def\underleftarrow{\mathpalette\underleftarrow@}%
\def\underleftarrow@#1#2{\vtop{\ialign{##\crcr$\m@th\hfil#1#2\hfil
  $\crcr\noalign{\nointerlineskip}\leftarrowfill@#1\crcr}}}%
\def\underleftrightarrow{\mathpalette\underleftrightarrow@}%
\def\underleftrightarrow@#1#2{\vtop{\ialign{##\crcr$\m@th
  \hfil#1#2\hfil$\crcr
 \noalign{\nointerlineskip}\leftrightarrowfill@#1\crcr}}}%
\def\qopnamewl@#1{\mathop{\operator@font#1}\nlimits@}
\let\nlimits@\displaylimits
\def\setboxz@h{\setbox\z@\hbox}
\def\varlim@#1#2{\mathop{\vtop{\ialign{##\crcr
 \hfil$#1\m@th\operator@font lim$\hfil\crcr
 \noalign{\nointerlineskip}#2#1\crcr
 \noalign{\nointerlineskip\kern-\ex@}\crcr}}}}
 \def\rightarrowfill@#1{\m@th\setboxz@h{$#1-$}\ht\z@\z@
  $#1\copy\z@\mkern-6mu\cleaders
  \hbox{$#1\mkern-2mu\box\z@\mkern-2mu$}\hfill
  \mkern-6mu\mathord\rightarrow$}
\def\leftarrowfill@#1{\m@th\setboxz@h{$#1-$}\ht\z@\z@
  $#1\mathord\leftarrow\mkern-6mu\cleaders
  \hbox{$#1\mkern-2mu\copy\z@\mkern-2mu$}\hfill
  \mkern-6mu\box\z@$}
\def\projlim{\qopnamewl@{proj\,lim}}
\def\injlim{\qopnamewl@{inj\,lim}}
\def\varinjlim{\mathpalette\varlim@\rightarrowfill@}
\def\varprojlim{\mathpalette\varlim@\leftarrowfill@}
\def\varliminf{\mathpalette\varliminf@{}}
\def\varliminf@#1{\mathop{\underline{\vrule\@depth.2\ex@\@width\z@
   \hbox{$#1\m@th\operator@font lim$}}}}
\def\varlimsup{\mathpalette\varlimsup@{}}
\def\varlimsup@#1{\mathop{\overline
  {\hbox{$#1\m@th\operator@font lim$}}}}
\def\align{\@verbatim \frenchspacing\@vobeyspaces \@alignverbatim
You are using the "align" environment in a style in which it is not defined.}
\let\csname endalign*\endcsname =\endtrivlist
\def\alignat{\@verbatim \frenchspacing\@vobeyspaces \@alignatverbatim
You are using the "alignat" environment in a style in which it is not defined.}
\let\csname endalignat*\endcsname =\endtrivlist
\def\xalignat{\@verbatim \frenchspacing\@vobeyspaces \@xalignatverbatim
You are using the "xalignat" environment in a style in which it is not defined.}
\let\csname endxalignat*\endcsname =\endtrivlist
\def\gather{\@verbatim \frenchspacing\@vobeyspaces \@gatherverbatim
You are using the "gather" environment in a style in which it is not defined.}
\let\csname endgather*\endcsname =\endtrivlist
\def\multiline{\@verbatim \frenchspacing\@vobeyspaces \@multilineverbatim
You are using the "multiline" environment in a style in which it is not defined.}
\let\csname endmultiline*\endcsname =\endtrivlist
\def\arrax{\@verbatim \frenchspacing\@vobeyspaces \@arraxverbatim
You are using a type of "array" construct that is only allowed in AmS-LaTeX.}
\def\tabulax{\@verbatim \frenchspacing\@vobeyspaces \@tabulaxverbatim
You are using a type of "tabular" construct that is only allowed in AmS-LaTeX.}
\let\csname endarrax*\endcsname =\endtrivlist
\let\csname endtabulax*\endcsname =\endtrivlist
 \def\endequation{%
     \ifmmode\ifinner 
      \iftag@
        \addtocounter{equation}{-1} 
        $\hfil
           \displaywidth\linewidth\@taggnum\egroup \endtrivlist
        \global\tag@false
        \global\@ignoretrue
      \else
        $\hfil
           \displaywidth\linewidth\@eqnnum\egroup \endtrivlist
        \global\tag@false
        \global\@ignoretrue
      \fi
     \else
      \iftag@
        \addtocounter{equation}{-1} 
        \eqno \hbox{\@taggnum}
        \global\tag@false%
        $$\global\@ignoretrue
      \else
        \eqno \hbox{\@eqnnum}
        $$\global\@ignoretrue
      \fi
     \fi\fi
 }
 \newif\iftag@ \tag@false
 \def\TCItag{\@ifnextchar*{\@TCItagstar}{\@TCItag}}
 \def\@TCItag#1{%
     \global\tag@true
     \global\def\@taggnum{(#1)}}
 \def\@TCItagstar*#1{%
     \global\tag@true
     \global\def\@taggnum{#1}}
     \def\tag{\@ifnextchar*{\@tagstar}{\@tag}}
     \def\@tag#1{%
         \global\tag@true
         \global\def\@taggnum{(#1)}}
     \def\@tagstar*#1{%
         \global\tag@true
         \global\def\@taggnum{#1}}
\begin{document}
\title[Integrability of linear RDEs and related topics]{Integrability of
(non-)linear rough differential equations and integrals.}
\author{Peter Friz, Sebastian Riedel}
\address{TU\ and WIAS\ Berlin (first and corresponding author,
friz@math.tu-berlin.de), TU\ Berlin (second author)}

\begin{abstract}
Integrability properties of (classical, linear, linear growth) rough
differential equations (RDEs) are considered, the Jacobian of the RDE flow
driven by Gaussian signals being a motivating example. We revisit and extend
some recent ground-breaking work of Cass--Litterer--Lyons in this regard; as
by-product, we obtain a user-friendly "transitivity property" of such
integrability estimates. We also consider rough integrals; as a novel
application, uniform Weibull tail estimates for a class of (random) rough
integrals are obtained. A concrete example arises from the stochastic
heat-equation, spatially mollified by hyper-viscosity, and we can recover
(in fact: sharpen)\ a technical key result of [Hairer,
Comm.PureAppl.Math.64,no.11,(2011),1547--1585].
\end{abstract}

\maketitle
\tableofcontents

\section{Introduction}

Integrability properties of linear rough differential equations (RDEs), and
related topics, driven by Brownian and then a Gaussian rough path (GRP), a
random rough path $\mathbf{X=X}\left( \omega \right) $, have been a serious
difficulty in a variety of recent applications of rough path theory. To wit,
for solutions of linear RDEs one has the typical - and as such sharp -
estimate $O\left( \exp \left( \left( const\right) \times \omega _{\mathbf{x}%
}\left( 0,T\right) \right) \right) $ where $\omega _{\mathbf{x}}\left(
0,T\right) =\left\Vert \mathbf{x}\right\Vert _{p\text{-var;}\left[ 0,T\right]
}^{p}$ denotes some (homogenous) $p$-variation norm (raised to power $p$).
In a Gaussian rough path setting, $\left\Vert \mathbf{X}\right\Vert _{p\text{%
-var;}\left[ 0,T\right] }$ enjoyes Gaussian integrability but as soon as $p>2
$ (the "interesting" case, which covers Brownian and rougher situations) one
has lost all control over moments of such (random) linear RDE solutions. In
a recent work, Cass, Litterer, Lyons \cite{CLL} have overcome a similar
problem, the integrability of the Jacobian of Gaussian RDE flow, as needed
in non-Markovian H\"{o}rmander theory \cite{HP11}.

With these (and some other, cf. below) problems in mind, we revisit the work
of Cass, Litterer, Lyons and propose (what we believe to be) a particularly
user-friendly formulation. We avoid the concept of "\textit{localized }$p$%
\textit{-variation}", as introduced in \cite{CLL}, and work throughout with
a quantity called $N_{\left[ 0,T\right] }\left( \mathbf{x}\right) $. As it
turns out, in many (deterministic) rough path estimates, as obtained in \cite%
{FV10} for instance, one may replace $\omega _{\mathbf{x}}\left( 0,T\right) $
by $N_{\left[ 0,T\right] }\left( \mathbf{x}\right) $. Doing so does not
require to revisit the (technical) proofs of these rough path estimates, but
rather to apply the existing estimates repeatedly on the intervals of a
carefully chosen partition of $\left[ 0,T\right] $. The point is that $N_{%
\left[ 0,T\right] }\left( \mathbf{X}\right) $ enjoyes much better
integrability than $\omega _{\mathbf{X}}\left( 0,T\right) $. Of course, this
does not rule out that for \textit{some} rough paths $\mathbf{x}$, $N_{\left[
0,T\right] }\left( \mathbf{x}\right) \approx \omega _{\mathbf{x}}\left(
0,T\right) $, in agreement with the essentially optimal nature of exisiting
rough path estimates in terms of $\omega _{\mathbf{x}}\left( 0,T\right) $.
For instance, both quantities will scale like $\lambda $ when $p=2$ and $%
\mathbf{x}$ is the \textit{pure-area rough path}, dilated by $\lambda >>1$.
Differently put, the point is that $N_{\left[ 0,T\right] }\left( \mathbf{X}%
\left( \omega \right) \right) $ will be smaller than $\omega _{\mathbf{X}%
\left( \omega \right) }\left( 0,T\right) $ for \textit{most} realizations of 
$\mathbf{X}\left( \omega \right) $.

The consequent focus on $N_{\left[ 0,T\right] }$ rather than "localized $p$%
-variation" aside, let us briefly enlist our contributions relative to \cite%
{CLL}. 

\begin{itemize}
\item[(i)] A technical condition "$p>q\left[ p\right] $" is removed; this
shows that the Cass, Litterer, Lyons results are valid assuming only
"complementary Young regularity of the Cameron-Martin space", i.e. $%
H\hookrightarrow C^{q-var}$ where $1/p+1/q>1$ and sample paths have finite $%
p $-variation, a natural condition, in particular in the context of
Malliavin calculus, whose importance was confirmed in a number of papers, 
\cite{friz-victoir-2007-gauss}, \cite{FO10}, \cite{CFV}, \cite{CF}, see also 
\cite{FV10}.

\item[(ii)] Their technical main result, Weibull tails of $N_{\left[ 0,T%
\right] }\left( \mathbf{X}\right) $ with shape parameter $2/q$, here $%
\mathbf{X}$ is a Gaussian rough path, remains valid for general rough paths
obtained as image of $\mathbf{X}$ under \textit{locally linear maps on
(rough) path space}. (This random rough paths may be far from Gaussian:
examples of locally linear maps are given by rough integration and (solving)
rough differential equations.)

\item[(iii)] The arguments are adapted to deal with (random) linear (and
also linear growth) rough differential equations (the solution maps here are
not locally linear!) driven by $\mathbf{X}\left( \omega \right) $. As above,
it suffices that $\mathbf{X}$ is the locally linear image of a Gaussian
rough path.
\end{itemize}

We conclude with two applications. First, we show how to recover log-Weibull
tails for $\left\vert J\right\vert $, the Jacobian of a Gaussian RDE flow.
(Afore-mentioned extended validity and some minor sharpening of the norm of $%
\left\vert J\right\vert $ aside, this was the main result of \cite{CLL}.)
Our point here is that \cite{CLL} use somewhat involved (known) explicit
estimates for the $J$ in terms of the Gaussian driving signal. In contrast,
our "user-friendly"\ formulation allows for a simple step-by-step approach:
recall that $J$ solves $dJ=J\,dM$ where $M$ is a non-Gaussian driving rough
path, obtained by solving an RDE / performing a rough integration. Since $M$
is the locally linear image of a Gaussian rough path, we can immediately
appeal to (iii). As was pointed out recently by \cite{HP11}, such estimates
are - in combination with a Norris lemma for rough paths - the key to a
non-Markovian Hoermander and then ergodic theory.\newline
Secondly, as a novel application, we consider (random) rough integrals of
the form $\int G\left( X\,\right) \,d\mathbf{X}$, with $G\in Lip^{\gamma -1}$%
,$\gamma >p$ and establish Weibull tails with shape parameter $2/q$, uniform
over classes of Gaussian process whose covariance satisfies a uniform
variational estimate. A special case arises when $\mathbf{X}=\mathbf{X}%
^{\epsilon }$ is taken, independently in each component, as solution to the
stochastic heat equation on the 1D torus, $\dot{u}=u_{xx}+\dot{W}$ with
hyper-viscosity term $\epsilon u_{xxxx}$ - as function of the space
variable, for fixed time. Complementary Young regularity is seen to hold
with $p>2$ and and $q=1$ and we so obtain (and in fact, improve from
exponential to Gaussian integrability) the uniform in $\epsilon $
integrability estimate \cite{H11}, Theorem 5.1, a somewhat central technical
result whose proof encompasses almost a third of that paper.

\section{Basis definitions}

\begin{definition}
Let $\omega $ be a control. For $\alpha >0$ and $\left[ s,t\right] \subset %
\left[ 0,1\right] $ we set%
\begin{eqnarray*}
\tau _{0}\left( \alpha \right) &=&s \\
\tau _{i+1}\left( \alpha \right) &=&\inf \left\{ u:\omega \left( \tau
_{i},u\right) \geq \alpha ,\tau _{i}\left( \alpha \right) <u\leq t\right\}
\wedge t
\end{eqnarray*}%
and define%
\begin{equation*}
N_{\alpha ,\left[ s,t\right] }\left( \omega \right) =\sup \left\{ n\in 
\mathbb{N\cup }\left\{ 0\right\} :\tau _{n}\left( \alpha \right) <t\right\} .
\end{equation*}%
When $\omega $ arises from a (homogenous) $p$-variation norm of a ($p$%
-rough) path, such as $\omega _{\mathbf{x}}=\left\Vert \mathbf{x}\right\Vert
_{p\text{-var;}\left[ \cdot ,\cdot \right] }^{p}$ or $\bar{\omega}_{\mathbf{x%
}}:=\left\vert \left\vert \left\vert \mathbf{x}\right\vert \right\vert
\right\vert _{p\text{-var;}\left[ \cdot ,\cdot \right] }^{p}$, detailed
definitions are give later in the text, we shall also write%
\begin{equation*}
N_{\alpha ,\left[ s,t\right] }\left( \mathbf{x}\right) :=N_{\alpha ,\left[
s,t\right] }\left( \omega _{\mathbf{x}}\right) \text{ and }\bar{N}_{\alpha ,%
\left[ s,t\right] }\left( \mathbf{x}\right) :=N_{\alpha ,\left[ s,t\right]
}\left( \bar{\omega}_{\mathbf{x}}\right) \text{.}
\end{equation*}
\end{definition}

In fact, we will be in a situation where $C^{-1}\bar{\omega}_{\mathbf{x}%
}\leq \omega _{\mathbf{x}}\leq C\bar{\omega}_{\mathbf{x}}$ for some constant 
$C$ which entails (cf. Lemma \ref{lemma_transitivity_of_N} below)%
\begin{equation*}
\bar{N}_{\alpha C,\left[ \cdot ,\cdot \right] }\left( \mathbf{x}\right) \leq
N_{\alpha ,\left[ \cdot ,\cdot \right] }\left( \mathbf{x}\right) \leq \bar{N}%
_{\alpha /C,\left[ \cdot ,\cdot \right] }\left( \mathbf{x}\right) .
\end{equation*}%
Furthermore, the precise value of $\alpha >0$ will not matter (c.f. Lemma %
\ref{lemma_Nalpha_Nbeta_estimate} below) so that a factor $C$ or $1/C$ is
indeed inconsequential; effectively, this means that one can switch between $%
N$ and $\bar{N}$ as one pleases.

We now study the scaling of $N_{\alpha }$. Note that $N_{\alpha ,\left[ s,t%
\right] }\left( \omega \right) \searrow 0$ for $\alpha \nearrow \infty $.

\begin{lemma}
\label{lemma_scaling_N_omega}Let $\omega $ be a control and $\lambda >0$.
Then $\left( s,t\right) \mapsto \lambda \omega \left( s,t\right) $ is again
a control and for all \thinspace $s<t$,%
\begin{equation*}
N_{\alpha ,\left[ s,t\right] }\left( \lambda \omega \right) =N_{\alpha
/\lambda ,\left[ s,t\right] }\left( \omega \right) .
\end{equation*}
\end{lemma}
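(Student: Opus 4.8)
The plan is to unwind both sides directly from the definition of the stopping times $\tau_i$, since the claim is essentially a bookkeeping identity relating the greedy partition for the control $\lambda\omega$ at threshold $\alpha$ to the greedy partition for $\omega$ at threshold $\alpha/\lambda$. First I would check that $(s,t)\mapsto\lambda\omega(s,t)$ is a control: superadditivity is immediate since $\lambda>0$ and $\lambda\omega(s,u)+\lambda\omega(u,t)\le\lambda\omega(s,t)$, continuity near the diagonal is preserved under multiplication by a positive constant, and $\lambda\omega(t,t)=0$. This is the routine part and I would dispatch it in one sentence.

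The heart of the argument is the observation that, for any subinterval $[\tau_i,u]\subset[s,t]$,
\[
\lambda\omega(\tau_i,u)\ge\alpha\quad\Longleftrightarrow\quad\omega(\tau_i,u)\ge\alpha/\lambda .
\]
Hence, writing $\tau_i=\tau_i(\alpha)$ for the times built from $\lambda\omega$ and $\sigma_i=\sigma_i(\alpha/\lambda)$ for those built from $\omega$, an induction on $i$ shows $\tau_i(\alpha)=\sigma_i(\alpha/\lambda)$ for every $i$: the base case $\tau_0=s=\sigma_0$ is definitional, and the inductive step follows because the infimum defining $\tau_{i+1}$ is taken over exactly the same set of $u$'s as the infimum defining $\sigma_{i+1}$, by the displayed equivalence applied with the common left endpoint $\tau_i=\sigma_i$ (the truncation $\wedge\,t$ is identical on both sides). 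Once the two families of stopping times coincide, the defining suprema
\[
N_{\alpha,[s,t]}(\lambda\omega)=\sup\{n:\tau_n(\alpha)<t\}
=\sup\{n:\sigma_n(\alpha/\lambda)<t\}=N_{\alpha/\lambda,[s,t]}(\omega)
\]
agree, which is the assertion.

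I do not expect a genuine obstacle here; the only point requiring a little care is the induction being well-posed, namely that at stage $i$ the previous times already agree so that "same left endpoint" is legitimate — this is why the argument must be run as a simultaneous induction on the whole tuple $(\tau_0,\dots,\tau_i)$ rather than on $\tau_{i+1}$ in isolation. A secondary nuisance is the boundary convention in "$\tau_i(\alpha)<u\le t$": one should note that the constraint $\tau_i<u$ and the cap $\wedge\,t$ are insensitive to the rescaling, so they contribute nothing beyond what the threshold equivalence already gives. Everything else is immediate from the definition.
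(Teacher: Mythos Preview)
Your proposal is correct and is precisely the unwinding of the definition that the paper has in mind; the paper's own proof consists of the single sentence ``Follows directly from the definition.'' Your inductive identification of the stopping times $\tau_i(\alpha)=\sigma_i(\alpha/\lambda)$ is exactly the content of that remark, so there is nothing to add.
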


\begin{proof}
Follows directly from the definition.
\end{proof}

\begin{lemma}
\label{lemma_transitivity_of_N}Let $\omega _{1},\omega _{2}$ be two
controls, $s<t$ and $\alpha >0$. Assume that $\omega _{1}\left( u,v\right)
\leq C\omega _{2}\left( u,v\right) $ holds whenever $\omega _{2}\left(
u,v\right) \leq \alpha $ for a constant $C$. Then $N_{C\alpha ,\left[ s,t%
\right] }\left( \omega _{1}\right) \leq N_{\alpha ,\left[ s,t\right] }\left(
\omega _{2}\right) $.
\end{lemma}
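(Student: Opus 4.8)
The plan is to unwind the definitions of the greedy times $\tau_i$ for the two controls and compare them directly. Write $(\sigma_i)$ for the sequence $\tau_i(\alpha)$ built from $\omega_2$ at level $\alpha$, and $(t_i)$ for the sequence $\tau_i(C\alpha)$ built from $\omega_1$ at level $C\alpha$; both start at $s$. I would show by induction on $i$ that $t_i \geq \sigma_i$ for every $i$ (as long as the relevant times are $<t$). The base case $t_0 = \sigma_0 = s$ is immediate.

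For the inductive step, suppose $t_i \geq \sigma_i$. By definition $t_{i+1}$ is the first time $u>t_i$ with $\omega_1(t_i,u) \geq C\alpha$ (capped at $t$), and $\sigma_{i+1}$ is the first time $u>\sigma_i$ with $\omega_2(\sigma_i,u)\geq \alpha$ (capped at $t$). The key observation is that for $u$ in the range $t_i \le u < t_{i+1}$ we have $\omega_1(t_i,u) < C\alpha$; I want to conclude $\omega_2(t_i,u) < \alpha$, hence also $\omega_2(\sigma_i,u) < \alpha$ by superadditivity of $\omega_2$ together with $\sigma_i \le t_i$, which forces $\sigma_{i+1}\ge u$ and ultimately $\sigma_{i+1}\ge t_{i+1}$, closing the induction. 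To get $\omega_2(t_i,u) < \alpha$ from $\omega_1(t_i,u) < C\alpha$, argue by contraposition using the hypothesis: if $\omega_2(t_i,u)\ge \alpha$ were to fail to give a contradiction we instead look at the last time $v\le u$ with $\omega_2(t_i,v)\le \alpha$ — by right-continuity/superadditivity of controls one can split the interval so that the hypothesis ``$\omega_2(u',v')\le\alpha \Rightarrow \omega_1(u',v')\le C\omega_2(u',v')$'' applies on each piece. Summing the resulting bounds on $\omega_1$ over the (at most two) pieces and using superadditivity of $\omega_1$ yields $\omega_1(t_i,u)\ge \alpha$ scaled appropriately; chasing the constants carefully gives the needed strict inequality.

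Once $t_i \ge \sigma_i$ holds for all $i$, the conclusion is immediate: if $\sigma_n = \tau_n(\alpha)(\omega_2) \ge t$ then also there is no obstruction, but more directly, $t_n \ge \sigma_n$ means that whenever $t_n < t$ we also have $\sigma_n < t$... wait, that is the wrong direction, so instead I use the contrapositive form: $t_n \geq \sigma_n$ implies that the largest $n$ with $\sigma_n < t$ is at least the largest $n$ with $t_n<t$ — no. The correct reading: larger greedy times mean the level-$C\alpha$ partition of $\omega_1$ is \emph{coarser}, so it has \emph{fewer} intervals. Precisely, $N_{C\alpha,[s,t]}(\omega_1) = \sup\{n : t_n < t\}$ and $N_{\alpha,[s,t]}(\omega_2) = \sup\{n : \sigma_n < t\}$; from $t_n \ge \sigma_n$, if $t_n < t$ then $\sigma_n \le t_n < t$, so every $n$ counted on the left is counted on the right, giving $N_{C\alpha,[s,t]}(\omega_1) \le N_{\alpha,[s,t]}(\omega_2)$.

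The main obstacle I anticipate is the interval-splitting argument in the inductive step: controls are only superadditive, not additive, so one cannot freely cut $[t_i,u]$ at the threshold where $\omega_2$ crosses $\alpha$ and recombine the $\omega_1$-estimates. The cleanest route is probably to note that $\omega_2(t_i, \cdot)$ is non-decreasing and right-continuous, let $v^\ast = \sup\{v \le u : \omega_2(t_i,v) \le \alpha\}$, apply the hypothesis on $[t_i, v^\ast]$ to get $\omega_1(t_i,v^\ast) \le C\alpha$, and handle the (possibly degenerate) remaining sliver $[v^\ast, u]$ separately, on which $\omega_2$ is arbitrarily close to — but has just exceeded — $\alpha$; taking a limit and invoking superadditivity of $\omega_1$ one more time delivers $\omega_1(t_i,u) \ge$ something forcing $u \ge t_{i+1}$. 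I would double-check whether one actually needs strict versus non-strict inequalities here and whether the constant should be $C\alpha$ or $\alpha$ at the crossing point; the statement as given suggests the bookkeeping works out exactly, with the factor $C$ absorbed precisely by comparing level $C\alpha$ for $\omega_1$ against level $\alpha$ for $\omega_2$.
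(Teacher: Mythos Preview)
Your overall strategy --- compare the greedy times by induction, deducing $t_i \geq \sigma_i$ and hence $N_{C\alpha,[s,t]}(\omega_1) \leq N_{\alpha,[s,t]}(\omega_2)$ --- is exactly the right one, and the final paragraph correctly explains why $t_n \geq \sigma_n$ yields the desired inequality. The problem lies entirely in the inductive step, where you have the logic running in the wrong direction at several points. First, you try to pass from $\omega_1(t_i,u) < C\alpha$ to $\omega_2(t_i,u) < \alpha$; but the hypothesis only says $\omega_2 \leq \alpha \Rightarrow \omega_1 \leq C\omega_2$, not the converse, and your interval-splitting workaround cannot repair this: on the ``sliver'' $[v^\ast,u]$ you have no information about $\omega_1$ at all. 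Second, even granting $\omega_2(t_i,u) < \alpha$, the step to $\omega_2(\sigma_i,u) < \alpha$ via ``superadditivity together with $\sigma_i \leq t_i$'' is backwards: enlarging the interval from $[t_i,u]$ to $[\sigma_i,u]$ can only \emph{increase} a control. Third, if both steps did go through, you would conclude $\sigma_{i+1} \geq t_{i+1}$, which is the opposite of what the induction needs.

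The fix is simply to reverse the flow of the argument. Start from $u$ with $t_i < u < \sigma_{i+1}$, so that $\omega_2(\sigma_i,u) < \alpha$. Since $\sigma_i \leq t_i$ (induction hypothesis), monotonicity of controls on nested intervals gives $\omega_2(t_i,u) \leq \omega_2(\sigma_i,u) < \alpha$. Now the hypothesis applies \emph{in the direction it is stated} and yields $\omega_1(t_i,u) \leq C\,\omega_2(t_i,u) < C\alpha$, whence $u < t_{i+1}$. Thus every $u < \sigma_{i+1}$ satisfies $u < t_{i+1}$, i.e.\ $\sigma_{i+1} \leq t_{i+1}$, closing the induction with no splitting, no limits, and no worry about strict versus non-strict inequalities. (The paper first rescales to reduce to $C=1$, but this is cosmetic.)
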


\begin{proof}
It suffices to consider the case $C=1$, the general case follows by the
scaling of $N$. Set%
\begin{eqnarray*}
\tau _{0}^{j}\left( \alpha \right) &=&s \\
\tau _{i+1}^{j}\left( \alpha \right) &=&\inf \left\{ u:\omega _{j}\left(
\tau _{i}^{j},u\right) \geq \alpha ,\tau _{i}^{j}\left( \alpha \right)
<u\leq t\right\} \wedge t
\end{eqnarray*}%
for $j=1,2$. It suffices to show that $\tau _{i}^{2}\leq \tau _{i}^{1}$
holds for every $i\in \mathbb{N}$. By induction over $i$: For $i=0$ this is
clear. If $\tau _{i}^{2}\leq \tau _{i}^{1}$ for some fixed $i$,%
\begin{equation*}
\omega _{1}\left( \tau _{i}^{1},u\right) \leq \omega _{2}\left( \tau
_{i}^{1},u\right) \leq \omega _{2}\left( \tau _{i}^{2},u\right)
\end{equation*}%
whenever $\omega _{2}\left( \tau _{i}^{2},u\right) \leq \alpha $. Hence%
\begin{equation*}
\inf_{u}\left\{ \omega _{2}\left( \tau _{i}^{2},u\right) \geq \alpha
\right\} \leq \inf_{u}\left\{ \omega _{1}\left( \tau _{i}^{2},u\right) \geq
\alpha \right\}
\end{equation*}%
and therefore $\tau _{i+1}^{2}\leq \tau _{i+1}^{1}$.
\end{proof}

\begin{lemma}
\label{lemma_Nalpha_Nbeta_estimate} Let $\omega $ be a control and $0<\alpha
\leq \beta $. Then%
\begin{equation*}
N_{\alpha ,\left[ s,t\right] }\left( \omega \right) \leq \frac{\beta }{%
\alpha }\left( 2N_{\beta ,\left[ s,t\right] }\left( \omega \right) +1\right)
.
\end{equation*}
\end{lemma}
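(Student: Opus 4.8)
\noindent\emph{Proof strategy.}\quad The plan is to compare the greedy times at level $\alpha$ with those at level $\beta$, and to show that strictly between two consecutive $\beta$-greedy times there can sit at most $\beta/\alpha+1$ many $\alpha$-greedy times. Write $\sigma_i:=\tau_i(\alpha)$ and $\theta_j:=\tau_j(\beta)$ for the points produced in the definition of $N_{\alpha,[s,t]}$ and $N_{\beta,[s,t]}$ relative to $[s,t]$. One may assume $n:=N_{\beta,[s,t]}(\omega)<\infty$, for otherwise the right-hand side is infinite. Then $s=\theta_0<\theta_1<\dots<\theta_n<\theta_{n+1}=t$, so the half-open intervals $[\theta_j,\theta_{j+1})$, $j=0,\dots,n$, partition $[s,t)$; if one likes, Lemma~\ref{lemma_scaling_N_omega} applied to $\omega/\alpha$ reduces matters to $\alpha=1\le\beta$, but this is purely cosmetic.

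First I would record two elementary facts. \emph{(a)} If $\sigma_i<t$, then $\omega(\sigma_{i-1},\sigma_i)\ge\alpha$: in that case $\sigma_i$ is the infimum of a non-empty set of points $u$ with $\omega(\sigma_{i-1},u)\ge\alpha$, and continuity of the control passes this inequality to the limit. \emph{(b)} If $\theta_j\le u<\theta_{j+1}$, then $\omega(\theta_j,u)<\beta$; this is immediate from the description of $\theta_{j+1}$ as an infimum truncated at $t$, and it also covers $j=n$, since $\theta_{n+1}=t$ is reached only through that truncation.

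The heart of the argument is a counting estimate on one block. Fix $j$ and let $\sigma_p<\sigma_{p+1}<\dots<\sigma_q$ be the $\alpha$-greedy times lying in $[\theta_j,\theta_{j+1})$ — consecutive indices, since the $\sigma_i$ increase — with the convention that the block may be empty. If $q>p$, then each of $\sigma_p,\dots,\sigma_q$ is $<t$, so combining \emph{(a)}, the superadditivity of $\omega$, and \emph{(b)} at the admissible endpoint $\sigma_q<\theta_{j+1}$,
\[
(q-p)\,\alpha\;\le\;\sum_{i=p}^{q-1}\omega(\sigma_i,\sigma_{i+1})\;\le\;\omega(\sigma_p,\sigma_q)\;\le\;\omega(\theta_j,\sigma_q)\;<\;\beta .
\]
Hence each $[\theta_j,\theta_{j+1})$ contains at most $\beta/\alpha+1$ of the $\sigma_i$. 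Since the $\alpha$-greedy times lying in $[s,t)$ are exactly $\sigma_0,\dots,\sigma_{N_{\alpha,[s,t]}(\omega)}$ — in particular finitely many — summing the block bound over $j=0,\dots,n$ gives
\[
N_{\alpha,[s,t]}(\omega)+1\;\le\;(n+1)\bigl(\beta/\alpha+1\bigr),
\]
and, using $\beta/\alpha\ge1$ to bound the additive term $n$ on the right by $n\beta/\alpha$, we get $N_{\alpha,[s,t]}(\omega)\le(n+1)\beta/\alpha+n\le\frac{\beta}{\alpha}(2n+1)$, which is the assertion.

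The only step I expect to require genuine care is the displayed block count: one must be sure that \emph{all} of the $\sigma_i$ entering the sum lie strictly below $t$ (so that fact \emph{(a)} applies to every gap), and that fact \emph{(b)} is invoked only at a point $<\theta_{j+1}$. Both are guaranteed because $\sigma_q$ belongs to the half-open interval $[\theta_j,\theta_{j+1})$, hence $\sigma_q<\theta_{j+1}\le t$. The remaining ingredients — continuity and superadditivity of a control, and the infimum description of the greedy times — are exactly those already used in Lemmas~\ref{lemma_scaling_N_omega} and \ref{lemma_transitivity_of_N}, and everything past the first display is arithmetic.
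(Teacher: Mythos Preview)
Your argument is correct and complete. The block-counting estimate works as stated: superadditivity gives $(q-p)\alpha<\beta$, so each half-open $\beta$-interval carries at most $\beta/\alpha+1$ of the $\sigma_i$, and the final arithmetic is clean. The only mildly circular-sounding step---asserting that the $\alpha$-greedy times in $[s,t)$ are finitely many---is in fact justified by the block count itself, since each of the $n+1$ blocks holds only finitely many.

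Your route is genuinely different from the paper's. The paper introduces the auxiliary quantity
\[
\omega_\alpha(s,t):=\sup_{\substack{D=(t_i)\subset[s,t]\\\omega(t_i,t_{i+1})\le\alpha}}\sum_i\omega(t_i,t_{i+1}),
\]
observes $\alpha N_{\alpha,[s,t]}(\omega)\le\omega_\alpha(s,t)\le\omega_\beta(s,t)$, and then invokes Proposition~4.6 of \cite{CLL} for the bound $\omega_\beta(s,t)\le(2N_{\beta,[s,t]}(\omega)+1)\beta$. So the paper's proof is shorter on the page but outsources the main step to an external reference and passes through the ``localized $p$-variation'' object the introduction explicitly says it wants to avoid. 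Your direct combinatorial argument is self-contained, uses nothing beyond continuity and superadditivity of the control, and arguably fits the paper's declared philosophy better. The paper's approach, on the other hand, makes the role of $\omega_\alpha$ transparent and connects the estimate to the \cite{CLL} framework; neither approach gives a sharper constant.
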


\begin{proof}
Set%
\begin{equation*}
\omega _{\alpha }\left( s,t\right) :=\sup_{\substack{ \left( t_{i}\right)
=D\subset \left[ s,t\right]  \\ \omega (t_{i},t_{i+1})\leq \alpha }}%
\sum_{t_{i}}\omega (t_{i},t_{i+1}).
\end{equation*}%
We clearly have $\omega _{\alpha }\left( s,t\right) \leq \omega _{\beta
}\left( s,t\right) $ and%
\begin{equation*}
\alpha N_{\alpha ,\left[ s,t\right] }\left( \omega \right)
=\sum_{i=0}^{N_{\alpha ,\left[ s,t\right] }\left( \omega \right) -1}\omega
(\tau _{i}\left( \alpha \right) ,\tau _{i+1}\left( \alpha \right) )\leq
\omega _{\alpha }(s,t).
\end{equation*}%
Finally, Proposition 4.6 in \cite{CLL} shows that $\omega _{\beta }\left(
s,t\right) \leq \left( 2N_{\beta ,\left[ s,t\right] }\left( \omega \right)
+1\right) \beta $. (Strictly speaking, Proposition 4.6 is formulated for a
particular control $\omega $, namely the control induced by the $p$%
-variation of a rough path. However, the proof only uses general properties
of control functions and the conclusion remains valid.)
\end{proof}

Let $\mathbf{x\colon }[0,T]\rightarrow G^{N}\left( \mathbb{R}^{d}\right) $
be a path. In the whole section, $\left\Vert \cdot \right\Vert _{p-var}$
denotes the $p$-variation norm for such paths induced by the
Carnot-Caratheodory metric; \cite{FV10}. Set $\omega _{\mathbf{x}%
}(s,t)=\left\Vert \mathbf{x}\right\Vert _{p-var;[s,t]}^{p}$ and $N_{\alpha ,%
\left[ s,t\right] }\left( \mathbf{x}\right) =N_{\alpha ,\left[ s,t\right]
}\left( \omega _{\mathbf{x}}\right) $ (the fact that $\omega _{\mathbf{x}}$
is indeed a control is well-known; c.f. \cite{FV10}).

\begin{lemma}
\label{lemma_pvar_dom_by_N}For any $\alpha >0$,%
\begin{equation*}
\left\Vert \mathbf{x}\right\Vert _{p-var;[s,t]}\leq \alpha ^{1/p}\left(
N_{\alpha ,\left[ s,t\right] }\left( \mathbf{x}\right) +1\right)
\end{equation*}
\end{lemma}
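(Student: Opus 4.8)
The plan is to unwind the greedy partition implicit in the definition of $N_{\alpha ,\left[ s,t\right] }$, bound the $p$-variation of $\mathbf{x}$ on each of its cells by $\alpha ^{1/p}$, and then reassemble these bounds using the sub-additivity of $p$-variation under concatenation of intervals.

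We may assume $N:=N_{\alpha ,\left[ s,t\right] }\left( \mathbf{x}\right) <\infty $ (otherwise the right-hand side is $+\infty $) and $s<t$. Let $s=\tau _{0}<\tau _{1}<\dots <\tau _{N+1}=t$ denote the points $\tau _{i}=\tau _{i}\left( \alpha \right) $ from the definition of $N_{\alpha,[s,t]}$; that $\tau _{N+1}=t$ is forced by the definition of $N$ as a supremum together with the truncation ``$\wedge \,t$''. I claim $\left\Vert \mathbf{x}\right\Vert _{p\text{-var};[\tau _{i},\tau _{i+1}]}\le \alpha ^{1/p}$, i.e. $\omega _{\mathbf{x}}(\tau _{i},\tau _{i+1})\le \alpha $, for every $i=0,\dots ,N$. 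For $i\le N-1$ one has $\tau _{i+1}<t$, so $\tau _{i+1}=\inf \{u:\omega _{\mathbf{x}}(\tau _{i},u)\ge \alpha \}$; since $\omega _{\mathbf{x}}(\tau _{i},\cdot )$ is continuous and vanishes at $\tau _{i}$, this gives $\omega _{\mathbf{x}}(\tau _{i},\tau _{i+1})=\alpha $. For the final cell $[\tau _{N},t]$: were $\omega _{\mathbf{x}}(\tau _{N},t)>\alpha $, the intermediate value theorem would produce $u\in (\tau _{N},t)$ with $\omega _{\mathbf{x}}(\tau _{N},u)=\alpha $, forcing $\tau _{N+1}\le u<t$, a contradiction; hence $\omega _{\mathbf{x}}(\tau _{N},t)\le \alpha $.

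To reassemble, I would invoke the (elementary) concatenation estimate
\[
\left\Vert \mathbf{x}\right\Vert _{p\text{-var};[a,c]}\le \left\Vert \mathbf{x}\right\Vert _{p\text{-var};[a,b]}+\left\Vert \mathbf{x}\right\Vert _{p\text{-var};[b,c]},\qquad a\le b\le c,
\]
valid for any path into a metric space, in particular for $G^{N}(\mathbb{R}^{d})$ with the Carnot--Carath\'{e}odory metric $d$; in the framework of \cite{FV10} this is available off the shelf. Iterating it over the $N+1$ cells $[\tau _{i},\tau _{i+1}]$ yields
\[
\left\Vert \mathbf{x}\right\Vert _{p\text{-var};[s,t]}\le \sum_{i=0}^{N}\left\Vert \mathbf{x}\right\Vert _{p\text{-var};[\tau _{i},\tau _{i+1}]}\le (N+1)\,\alpha ^{1/p},
\]
which is the assertion. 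Should one wish to prove the concatenation estimate from scratch, one takes an arbitrary partition $D=(u_{j})$ of $[a,c]$: the cells of $D$ lying in $[a,b]$ (resp. in $[b,c]$) contribute at most $\omega _{\mathbf{x}}(a,b)$ (resp. $\omega _{\mathbf{x}}(b,c)$) by super-additivity and monotonicity of $\omega _{\mathbf{x}}$, while the (at most one) cell $[u_{\ell },u_{\ell +1}]$ straddling $b$ is split via $d(\mathbf{x}_{u_{\ell }},\mathbf{x}_{u_{\ell +1}})\le d(\mathbf{x}_{u_{\ell }},\mathbf{x}_{b})+d(\mathbf{x}_{b},\mathbf{x}_{u_{\ell +1}})$; a one-line optimisation (the map $(\xi ,\eta )\mapsto P(1-\xi ^{p})+Q(1-\eta ^{p})+(P^{1/p}\xi +Q^{1/p}\eta )^{p}$ on $[0,1]^{2}$ is coordinatewise non-decreasing, hence maximal at $(1,1)$ with value $(P^{1/p}+Q^{1/p})^{p}$) then gives $\omega _{\mathbf{x}}(a,c)\le \bigl(\omega _{\mathbf{x}}(a,b)^{1/p}+\omega _{\mathbf{x}}(b,c)^{1/p}\bigr)^{p}$ after taking the supremum over $D$.

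The only real content is this concatenation sub-additivity of $p$-variation; everything else is bookkeeping with the greedy partition. The one point to watch is the last cell $[\tau _{N},t]$, where --- unlike the interior cells, which saturate at $\omega _{\mathbf{x}}=\alpha $ --- one only obtains the inequality $\omega _{\mathbf{x}}(\tau _{N},t)\le \alpha $; fortunately that is precisely the direction needed for an upper bound on $\left\Vert \mathbf{x}\right\Vert _{p\text{-var};[s,t]}$.
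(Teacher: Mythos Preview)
Your proof is correct. Both you and the paper use the greedy partition $(\tau_i)$ and the fact that $\omega_{\mathbf{x}}(\tau_i,\tau_{i+1})\le\alpha$ on each cell (you are more explicit about the last cell, which is fine). The difference lies in the reassembly step. You invoke the concatenation sub-additivity of the $p$-variation \emph{norm}, $\|\mathbf{x}\|_{p\text{-var};[a,c]}\le\|\mathbf{x}\|_{p\text{-var};[a,b]}+\|\mathbf{x}\|_{p\text{-var};[b,c]}$, and iterate. The paper instead works directly with an arbitrary dissection $D$ of $[s,t]$, refines it by the $\tau_j$'s, and uses only the sub-additivity of the Carnot--Carath\'eodory \emph{group norm}, $\|\mathbf{x}_{u,v}\|\le\sum_i\|\mathbf{x}_{u_i,u_{i+1}}\|$, followed by the power-mean inequality to get $\|\mathbf{x}_{u,v}\|^p\le m^{p-1}\sum_i\|\mathbf{x}_{u_i,u_{i+1}}\|^p$; summing and taking the supremum over $D$ then gives $(N+1)^p\alpha$. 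Your route is cleaner if one is willing to quote the $p$-variation concatenation inequality (and your optimisation sketch for it is correct); the paper's route is slightly more self-contained in that it needs only the group-norm triangle inequality rather than the Minkowski-type statement for $p$-variation.
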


\begin{proof}
Let $u=u_{0}<u_{1}<\ldots <u_{m}=v$. Note that%
\begin{equation*}
\left\Vert \mathbf{x}_{u,v}\right\Vert ^{p}=\left\Vert \mathbf{x}%
_{u,u_{1}}\otimes \mathbf{x}_{u_{1},u_{2}}\otimes \ldots \otimes \mathbf{x}%
_{u_{m-1},v}\right\Vert ^{p}\leq m^{p-1}\sum_{i=0}^{m-1}\left\Vert \mathbf{x}%
_{u_{i},u_{i+1}}\right\Vert ^{p}.
\end{equation*}%
Let $D$ be a dissection of $\left[ s,t\right] $ and $\left( \tau _{j}\right)
_{j=0}^{N_{\alpha ,\left[ s,t\right] }\left( \mathbf{x}\right) }=\left( \tau
_{j}\left( \alpha \right) \right) _{j=0}^{N_{\alpha ,\left[ s,t\right]
}\left( \mathbf{x}\right) }$. Set $\bar{D}=D\cup \left( \tau _{j}\right)
_{j=0}^{N_{\alpha ,\left[ s,t\right] }\left( \mathbf{x}\right) }$. Then,%
\begin{eqnarray*}
\sum_{t_{i}\in D}\left\Vert \mathbf{x}_{t_{i},t_{i+1}}\right\Vert ^{p} &\leq
&\left( N_{\alpha ,\left[ s,t\right] }\left( \mathbf{x}\right) +1\right)
^{p-1}\sum_{\bar{t}_{i}\in \bar{D}}\left\Vert \mathbf{x}_{\bar{t}_{i},\bar{t}%
_{i+1}}\right\Vert ^{p} \\
&\leq &\left( N_{\alpha ,\left[ s,t\right] }\left( \mathbf{x}\right)
+1\right) ^{p-1}\sum_{j=0}^{N_{\alpha ,\left[ s,t\right] }\left( \mathbf{x}%
\right) }\left\Vert \mathbf{x}\right\Vert _{p-var;\left[ \tau _{j},\tau
_{j+1}\right] }^{p} \\
&\leq &\left( N_{\alpha ,\left[ s,t\right] }\left( \mathbf{x}\right)
+1\right) ^{p}\alpha .
\end{eqnarray*}%
Taking the supremum over all partitions shows the claim.
\end{proof}

\section{Cass, Litterer and Lyons revisited}

The basic object is a continuous $d$-dimensional Gaussian process, say $X$,
realized as coordinate process on the (not-too abstract) Wiener space $%
\left( E,\mathcal{H},\mu \right) $ where $E=C\left( \left[ 0,T\right] ,%
\mathbb{R}^{d}\right) $ equipped with $\mu $ is a Gaussian measure s.t. $X$
has zero-mean, independent components and that $V_{\rho \text{-var}}\left( R,%
\left[ 0,T\right] ^{2}\right) $, the $\rho $-variation in 2D sense of the
covariance $R$ of $X$, is finite for some $\rho \in \lbrack 1,2)$. From \cite%
[Theorem 15.33]{FV10} it follows that we can lift the sample paths of $X$ to 
$p$-rough paths for any $p>2\rho $ and we denote this process by $\mathbf{X}$%
, called the \emph{enhanced Gaussian process}. We also assume that the
Cameron-Martin space $\mathcal{H}$ has \emph{complementary Young regularity}
in the sense that $\mathcal{H}$ embeds continuously in $C^{q\text{-var}%
}\left( \left[ 0,T\right] ,\mathbb{R}^{d}\right) $ with $\frac{1}{p}+\frac{1%
}{q}>1$. Note $q\leq p$ for $\mu $ is supported on the paths of finite $p$%
-variation. There are many examples of such a situation \cite{FV10}, let us
just note that fractional Brownian motion (fBM) with Hurst parameter $H>1/4$
falls in this class of Gaussian rough paths.

In this section, we present, in a self-contained fashion, the results \cite%
{CLL}. In fact, we present a slightly modified argument which avoids the
technical condition "$p>q\left[ p\right] $" made in \cite[Theorem 6.2,
condition (3)]{CLL} (this still applies to fBM with~$H>1/4$ but causes some
discontinuities in the resulting estimates when $H$ crosses the barrier $1/3$%
). Our argument also gives a unified treatment for all $p$ thereby
clarifying the structure of the proof\ (in \cite[Theorem 6.2]{CLL} the cases 
$\left[ p\right] =2,3$ are treated separately "by hand"). That said, we
clearly follow \cite{CLL} in their ingenious use of Borell's inequality.

In the whole section, if not stated otherwise, for a $p$-rough path $\mathbf{%
x}$, set%
\begin{equation*}
\left\vert \left\vert \left\vert \mathbf{x}\right\vert \right\vert
\right\vert _{p-var;[s,t]}:=\left( \sum_{k=1}^{\left[ p\right] }\left\Vert 
\mathbf{x}^{\left( k\right) }\right\Vert _{p/k-var;\left[ s,t\right]
}^{p/k}\right) ^{1/p}.
\end{equation*}%
Then $\left\vert \left\vert \left\vert \mathbf{\cdot }\right\vert
\right\vert \right\vert _{p-var}$ is a homogeneous rough path norm. Recall
that, as a consequence of Theorem 7.44 in \cite{FV10}, the norms $\left\vert
\left\vert \left\vert \mathbf{\cdot }\right\vert \right\vert \right\vert
_{p-var}$ and $\left\vert \left\vert \mathbf{\cdot }\right\vert \right\vert
_{p-var}$ are equivalent, hence there is a constant $C$ such that%
\begin{equation}
\frac{1}{C}\left\vert \left\vert \left\vert \mathbf{\cdot }\right\vert
\right\vert \right\vert _{p-var}\leq \left\vert \left\vert \mathbf{\cdot }%
\right\vert \right\vert _{p-var}\leq C\left\vert \left\vert \left\vert 
\mathbf{\cdot }\right\vert \right\vert \right\vert _{p-var}.
\label{eqn_equiv_hom_rp_norms}
\end{equation}%
The map $\left( s,t\right) \mapsto \bar{\omega}_{\mathbf{x}}\left(
s,t\right) =\left\vert \left\vert \left\vert \mathbf{x}\right\vert
\right\vert \right\vert _{p-var;[s,t]}^{p}$ is a control and we set $\bar{N}%
_{\alpha ,\left[ s,t\right] }\left( \mathbf{x}\right) =N_{\alpha ,\left[ s,t%
\right] }\left( \bar{\omega}_{\mathbf{x}}\right) $.

\begin{lemma}
\label{lemma_Aa_pos_meas}Assume that $\mathcal{H}$ has complementary Young
regularity to $X$. Then for any $a>0$, the set%
\begin{equation*}
A_{a}=\left\{ \left\vert \left\vert \left\vert \mathbf{X}\right\vert
\right\vert \right\vert _{p-var;[0,T]}<a\right\}
\end{equation*}%
has positive $\mu $-measure. Moreover, if $M\geq V_{\rho -\text{var}}\left(
R;\left[ 0,T\right] ^{2}\right) $, we have the lower bound%
\begin{equation*}
\mu \left\{ |||\mathbf{X|||}_{p-var;\left[ 0,T\right] }<a\right\} \geq 1-%
\frac{C}{\exp \left( a\right) }
\end{equation*}%
where $C$ is a constant only depending on $\rho ,p$ and $M$.
\end{lemma}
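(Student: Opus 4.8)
The plan is to derive both assertions from a single ingredient: a Fernique-type integrability estimate for the homogeneous norm of the enhanced Gaussian process. First I would record that, since $\rho<2$ and $p>2\rho$, the sample-path lift theorem together with the Gaussian iterated-integral estimates and the Fernique-type theorem for Gaussian rough paths (all in \cite{FV10}) give that $|||\mathbf{X}|||_{p\text{-var};[0,T]}<\infty$ for $\mu$-a.e.\ path, and, more quantitatively, that there are $\eta>0$ and $C_{0}<\infty$ \emph{depending only on $\rho$, $p$ and on any number $M\ge V_{\rho\text{-var}}(R;[0,T]^{2})$} such that
\begin{equation*}
\mathbb{E}_{\mu}\Big[\exp\big(\eta\,|||\mathbf{X}|||_{p\text{-var};[0,T]}^{2}\big)\Big]\le C_{0}.
\end{equation*}
The only step here that is not completely formal is tracking this dependence: in the Gaussian estimates the $2$D $\rho$-variation $V_{\rho\text{-var}}(R;[s,t]^{2})$ enters the level-$k$ $L^{2}$-bound to the power $k$, and the Borell/Fernique argument then upgrades that $L^{2}$-control to square-exponential integrability with constants that see the covariance only through the single number $M$ (the $T$-dependence being already absorbed into $M$).

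Granting this, the elementary inequality $t\le \tfrac{\eta}{2}t^{2}+\tfrac{1}{2\eta}$ shows
\begin{equation*}
C:=\mathbb{E}_{\mu}\big[\exp\big(|||\mathbf{X}|||_{p\text{-var};[0,T]}\big)\big]\le e^{1/(2\eta)}\sqrt{C_{0}}<\infty ,
\end{equation*}
which is again a function of $\rho,p,M$ only. Markov's inequality then yields $\mu\{|||\mathbf{X}|||_{p\text{-var};[0,T]}\ge a\}\le C\,e^{-a}$ for every $a>0$, that is,
\begin{equation*}
\mu(A_{a})=\mu\{|||\mathbf{X}|||_{p\text{-var};[0,T]}<a\}\ge 1-\frac{C}{\exp(a)} ,
\end{equation*}
the asserted lower bound; in particular $\mu(A_{a})>0$ as soon as $a>\log C$.

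It remains to see that $\mu(A_{a})>0$ for \emph{every} $a>0$, including small $a$ for which the last display is vacuous. Here I would use the hypothesis of complementary Young regularity: under it the law of $\mathbf{X}$ on the space of geometric $p$-rough paths has full support for the $p$-variation topology (\cite{FV10}), so every $p$-variation neighbourhood of the trivial rough path $\mathbf{1}$ carries positive $\mu$-mass; since $|||\mathbf{1}|||_{p\text{-var};[0,T]}=0<a$, the set $A_{a}$ contains such a neighbourhood, whence $\mu(A_{a})>0$. (Alternatively, once $|||\mathbf{X}|||_{p\text{-var};[0,T]}<\infty$ $\mu$-a.s.\ is available, a direct small-ball estimate for the enhanced Gaussian process does the job.) Thus the genuine work is concentrated in the first step — obtaining the Gaussian integrability bound with constants depending on the Gaussian structure only through $\rho$, $p$ and $M$ — while everything afterwards is Markov's inequality together with a support statement.
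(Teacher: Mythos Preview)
Your proposal is correct and follows essentially the same route as the paper: a support theorem for Gaussian rough paths gives $\mu(A_a)>0$ for every $a>0$, and exponential integrability of $|||\mathbf{X}|||_{p\text{-var};[0,T]}$ (with constants depending only on $\rho,p,M$) combined with Markov/Chebyshev gives the quantitative lower bound. The only differences are cosmetic: the paper cites the linear-exponential moment bound $E[\exp(|||\mathbf{X}|||_{p\text{-var}})]\le C$ directly from \cite{FV10}, Theorem~15.33, whereas you start from the stronger square-exponential Fernique estimate and reduce; and the paper explicitly notes that Theorem~15.33 is stated for $p\in(2\rho,4)$ and extends to larger $p$ via the Lipschitz property of the Lyons lift $S_{[p]}$, a step you absorb into your opening citation.
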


\begin{proof}
The support theorem for Gaussian rough paths (\cite[Theorem 15.60]{FV10})
shows that%
\begin{equation*}
\text{supp}\left[ \mathbf{X}_{\ast }\mu \right] =\overline{S_{\left[ p\right]
}\left( \mathcal{H}\right) }
\end{equation*}%
holds for $p\in (2\rho ,4)$. Hence every neighbourhood of the zero-path has
positive measure which is the first statement. The general case follows from
the a.s. estimate%
\begin{equation}
|||S_{[p^{\prime }]}\left( \mathbf{X}\right) |||_{p^{\prime }-var}\leq
|||S_{[p^{\prime }]}\left( \mathbf{X}\right) |||_{p-var}\leq C_{p,p^{\prime
}}|||\mathbf{X|||}_{p-var}  \label{eqn_lipschitz_lyons_lift}
\end{equation}%
which holds for every $p\leq p^{\prime }$, c.f. \cite{FV10}, Theorem 9.5.
For the lower bound, recall that from \cite{FV10}, Theorem 15.33 one can
deduce that%
\begin{equation}
E\left( \exp |||\mathbf{X|||}_{p-var;\left[ 0,T\right] }\right) \leq C
\label{eqn_moments_p_var_unif_bdd}
\end{equation}%
for $p\in (2\rho ,4)$ where $C$ only depends on $\rho ,p$ and $M$. Using $%
\left( \ref{eqn_lipschitz_lyons_lift}\right) $ shows that this actually
holds for every $p>2\rho $. Finally, by Chebychev's inequality,%
\begin{equation*}
\mu \left\{ |||\mathbf{X|||}_{p-var;\left[ 0,T\right] }<a\right\} \geq 1-%
\frac{C}{\exp \left( a\right) }.
\end{equation*}
\end{proof}

In the next theorem we cite the famous isoperimetric inequality due to C.
Borell (for a proof c.f. \cite[Theorem 4.3]{Le}).

\begin{theorem}[Borell]
\label{theorem_borell}Let $\left( E,\mathcal{H},\mu \right) $ be an abstract
Wiener space and $\mathcal{K}$ denote the unit ball in $H$. If $A\subset E$
is a Borell set with positive measure, then for every $r\geq 0$%
\begin{equation*}
\mu \left( A+r\mathcal{K}\right) \geq \Phi \left( \Phi ^{-1}\left( \mu
\left( A\right) \right) +r\right)
\end{equation*}%
where $\Phi $ is the cumulative distribution function of a standard normal
random variable, i.e. $\Phi =\left( 2\pi \right) ^{-1/2}\int_{-\infty
}^{\cdot }\exp \left( -x^{2}/2\right) \,dx$.
\end{theorem}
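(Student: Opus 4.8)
The result is a classical fact whose proof proceeds in two stages: reducing the infinite-dimensional assertion to the Gaussian isoperimetric inequality on $\mathbb{R}^{n}$, and then proving that finite-dimensional inequality. Since the paper uses Borell's theorem only as an input (and refers to \cite[Theorem 4.3]{Le}), in practice one just cites it; what follows is the roadmap one would follow to prove it.

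\textbf{Step 1: reduction to finite dimensions.} Fix an orthonormal basis $(h_{k})_{k\geq 1}$ of $\mathcal{H}$, set $V_{n}=\mathrm{span}(h_{1},\dots ,h_{n})$, and let $\pi_{n}\colon E\to V_{n}$ be the associated finite-rank projections; the law of $\pi_{n}$ under $\mu$ is the standard Gaussian $\gamma_{n}$ on $V_{n}\cong\mathbb{R}^{n}$, and $\mathcal{K}\cap V_{n}$ is precisely the Euclidean unit ball of $V_{n}$. Granting the finite-dimensional statement that half-spaces are extremal, one first treats a cylinder set $A=\pi_{n}^{-1}(B)$: using the inclusion $\pi_{n}^{-1}\big(B+r(\mathcal{K}\cap V_{n})\big)\subset A+r\mathcal{K}$ (a point of the left-hand side can be written $\tilde{x}+rk$ with $k\in\mathcal{K}\cap V_{n}\subset\mathcal{K}$ and $\pi_{n}(\tilde{x})\in B$), one gets $\mu(A+r\mathcal{K})\geq\gamma_{n}\big(B+r(\mathcal{K}\cap V_{n})\big)\geq\Phi\big(\Phi^{-1}(\mu(A))+r\big)$. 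A general Borel $A$ is then obtained by a standard approximation along the $\pi_{n}$ together with inner regularity of $\mu$; the one point of care is that $A+r\mathcal{K}$ need not be Borel: it is the continuous image of the Borel set $A\times(r\mathcal{K})$ with $r\mathcal{K}$ compact in $E$, hence analytic and therefore measurable for the completion of $\mu$, which is all that is needed.

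\textbf{Step 2: the Gaussian isoperimetric inequality on $\mathbb{R}^{n}$.} This is the statement that among Borel sets of given $\gamma_{n}$-measure the half-space minimises $\gamma_{n}(A_{r})$, where $A_{r}=\{x:\mathrm{dist}(x,A)\leq r\}$. I would prove it via Bobkov's functional inequality. Let $\mathcal{I}=\varphi\circ\Phi^{-1}$ be the Gaussian isoperimetric profile, $\varphi=\Phi'$. One first proves an elementary two-point inequality on $\{0,1\}$ with Gaussian-type weights, tensorises it by induction on $n$ to obtain, for every smooth $f\colon\mathbb{R}^{n}\to[0,1]$,
\begin{equation*}
\mathcal{I}\Big(\int f\,d\gamma_{n}\Big)\leq\int\sqrt{\mathcal{I}(f)^{2}+|\nabla f|^{2}}\,d\gamma_{n},
\end{equation*}
and then applies this to smooth approximations of $\mathbf{1}_{A}$ to deduce the isoperimetric bound $\gamma_{n}^{+}(\partial A)\geq\mathcal{I}(\gamma_{n}(A))$ for the Gaussian Minkowski content, i.e. $\tfrac{d}{dr}\gamma_{n}(A_{r})\geq\mathcal{I}(\gamma_{n}(A_{r}))$ for a.e. $r$; integrating this differential inequality with $\mathcal{I}=\varphi\circ\Phi^{-1}$ gives exactly $\gamma_{n}(A_{r})\geq\Phi\big(\Phi^{-1}(\gamma_{n}(A))+r\big)$. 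Equally valid alternatives are Borell's original route — L\'evy's isoperimetric inequality on the sphere $S^{N-1}(\sqrt{N})$ combined with Poincar\'e's limit $N\to\infty$, under which spherical caps become half-spaces — and deriving the enlargement form from Ehrhard's inequality $\Phi^{-1}\big(\gamma_{n}(\lambda A+(1-\lambda)C)\big)\geq\lambda\Phi^{-1}(\gamma_{n}(A))+(1-\lambda)\Phi^{-1}(\gamma_{n}(C))$ by taking $C$ a large centred ball and rescaling.

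\textbf{Main obstacle.} The genuine content lies entirely in Step 2: each of the three routes rests on a symmetrisation-type inequality that is not formal — the two-point inequality behind Bobkov's estimate, L\'evy's spherical isoperimetry, or Ehrhard's inequality — whereas Step 1 is a routine measure-theoretic approximation once the universal measurability of $A+r\mathcal{K}$ is noted. This is precisely why, in the present paper, it is sensible simply to invoke \cite[Theorem 4.3]{Le}.
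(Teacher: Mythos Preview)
Your proposal is correct and, in fact, goes well beyond the paper: the paper offers no proof of Borell's inequality whatsoever, merely citing \cite[Theorem 4.3]{Le}, and you explicitly acknowledge this before supplying a sound roadmap of the standard argument. The two-step outline --- finite-dimensional reduction via cylinder sets and then the Gaussian isoperimetric inequality on $\mathbb{R}^n$ --- is accurate, and each of the three routes you list for Step~2 (Bobkov's functional inequality, the spherical isoperimetry plus Poincar\'e limit, Ehrhard) is a legitimate proof found in the literature; your remark on the analytic (hence universally measurable) nature of $A+r\mathcal{K}$ is a genuine technical point often glossed over.
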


\begin{corollary}
\label{cor_conseq_borell}Let $f,g\colon E\rightarrow \left[ 0,\infty \right] 
$ be measurable maps and $a,\sigma >0$ such that%
\begin{equation*}
A_{a}:=\left\{ x:f\left( x\right) \leq a\right\}
\end{equation*}%
has positive measure and let $\hat{a}\leq \Phi ^{-1}\mu \left( A_{a}\right) $%
. Assume furthermore that there exists a null-set $N$ such that for all $%
x\in N^{c}$ and $h\in \mathcal{H}:$%
\begin{equation*}
f\left( x-h\right) \leq a\Rightarrow \sigma \left\Vert h\right\Vert _{%
\mathcal{H}}\geq g\left( x\right) .
\end{equation*}%
Then $g$ has a Gauss tail; more precisely, for all $r>0$,%
\begin{equation*}
\mu \left( \left\{ x:g\left( x\right) >r\right\} \right) \leq \exp \left( -%
\frac{\left( \hat{a}+\frac{r}{\sigma }\right) ^{2}}{2}\right) .
\end{equation*}
\end{corollary}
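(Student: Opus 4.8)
The plan is to combine Borell's inequality (Theorem~\ref{theorem_borell}) with the hypothesised implication, in the standard way. First I would estimate the measure of the super-level set $\{g>r\}$ by passing to its intersection with the good set $N^c$ (which changes nothing, $N$ being null) and then showing that this set is disjoint from a suitable Minkowski enlargement of $A_a$. Concretely, I claim that for $r>0$,
\begin{equation*}
\left(A_a + \tfrac{r}{\sigma}\mathcal{K}\right) \cap N^c \subseteq \left\{x : g(x) \leq r\right\},
\end{equation*}
where $\mathcal{K}$ is the unit ball of $\mathcal{H}$. Indeed, if $x = y + h$ with $f(y)\leq a$ and $\|h\|_{\mathcal H}\leq r/\sigma$, then writing $y = x-h$ we have $f(x-h)\leq a$, so the hypothesis (valid on $N^c$) gives $g(x)\leq \sigma\|h\|_{\mathcal H}\leq r$.

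Taking complements, $\{x : g(x) > r\} \subseteq \left(E \setminus \left(A_a + \tfrac{r}{\sigma}\mathcal{K}\right)\right) \cup N$, hence
\begin{equation*}
\mu\left(\{g>r\}\right) \leq 1 - \mu\left(A_a + \tfrac{r}{\sigma}\mathcal{K}\right).
\end{equation*}
Now I would apply Borell's inequality with $A = A_a$ (which has positive measure by assumption) and $r$ replaced by $r/\sigma$, obtaining
\begin{equation*}
\mu\left(A_a + \tfrac{r}{\sigma}\mathcal{K}\right) \geq \Phi\left(\Phi^{-1}\left(\mu(A_a)\right) + \tfrac{r}{\sigma}\right) \geq \Phi\left(\hat a + \tfrac{r}{\sigma}\right),
\end{equation*}
the last step using $\hat a \leq \Phi^{-1}\mu(A_a)$ and monotonicity of $\Phi$. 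Therefore $\mu(\{g>r\}) \leq 1 - \Phi(\hat a + r/\sigma) = \Phi\left(-(\hat a + r/\sigma)\right)$ by symmetry of the standard normal.

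It remains to bound the Gaussian tail $\Phi(-u)$ for $u = \hat a + r/\sigma$. Here one uses the elementary estimate $\Phi(-u) \leq \tfrac12\exp(-u^2/2) \leq \exp(-u^2/2)$, valid for all real $u$ (for $u\leq 0$ it is trivial since the right side is $\geq \tfrac12$; for $u\geq 0$ it is the classical bound $\int_u^\infty e^{-x^2/2}\,dx \leq \int_u^\infty \tfrac{x}{u}e^{-x^2/2}\,dx = \tfrac{1}{u}e^{-u^2/2}$, refined slightly, or simply the Chernoff-type bound). This yields
\begin{equation*}
\mu\left(\{x : g(x) > r\}\right) \leq \exp\left(-\frac{\left(\hat a + \frac{r}{\sigma}\right)^2}{2}\right),
\end{equation*}
which is the assertion. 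I do not expect any genuine obstacle here: the only point requiring a little care is the set-inclusion in the first step (making sure the null set $N$ is correctly accounted for, and that one enlarges $A_a$ rather than $\{g>r\}$), and the choice to feed $r/\sigma$ rather than $r$ into Borell's inequality so that the factor $\sigma$ lands in the right place. The Gaussian tail estimate at the end is completely routine.
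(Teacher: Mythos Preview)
Your argument is essentially the paper's own: establish the inclusion $A_a + (r/\sigma)\mathcal{K} \subseteq \{g \le r\}$ (up to the null set $N$, which you handle more carefully than the paper, which simply sets $\sigma=1$ and writes the chain of inclusions directly), apply Borell's inequality, and finish with the standard bound $1-\Phi(u) \le e^{-u^2/2}$. The only wrinkle is your justification that this last bound holds ``for all real $u$'' because ``the right side is $\ge \tfrac12$'' when $u\le 0$: that is false once $|u| > \sqrt{2\log 2}$ (e.g.\ $u=-2$ gives $\Phi(2)\approx 0.977 > e^{-2}\approx 0.135$), so the bound $\bar\Phi(u)\le e^{-u^2/2}$ genuinely requires $u\ge 0$ --- but the paper's proof has the same tacit gap, and in the applications (Corollary~\ref{cor_cll_results_single_gaussian}) one always takes $\hat a \ge 0$ so that $\hat a + r/\sigma > 0$.
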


\begin{proof}
W.l.o.g. $\sigma =1$. Then%
\begin{eqnarray*}
\left\{ x:g\left( x\right) \leq r\right\} &=&\bigcup_{h\in r\mathcal{K}%
}\left\{ x:\left\Vert h\right\Vert _{\mathcal{H}}\geq g\left( x\right)
\right\} \\
&\supset &\bigcup_{h\in r\mathcal{K}}\left\{ x:f\left( x-h\right) \leq
a\right\} \\
&=&\bigcup_{h\in r\mathcal{K}}\left\{ x+h:f\left( x\right) \leq a\right\} \\
&=&A_{a}+r\mathcal{K}\text{.}
\end{eqnarray*}%
By Theorem \ref{theorem_borell},%
\begin{equation*}
\mu \left( \left\{ x:g\left( x\right) >r\right\} \right) \leq \mu \left(
\left\{ A_{a}+r\mathcal{K}\right\} ^{c}\right) \leq \bar{\Phi}\left( \hat{a}%
+r\right)
\end{equation*}%
where $\bar{\Phi}=1-\Phi $. The claim follows from the standard estimate $%
\bar{\Phi}\left( r\right) \leq \exp \left( -r^{2}/2\right) $.
\end{proof}

\begin{proposition}
\label{prop_cll_theorem}Let $X$ be a continuous $d$-dimensional Gaussian
process, realized as coordinate process on $\left( E,\mathcal{H},\mu \right) 
$ where $E=C\left( \left[ 0,T\right] ,\mathbb{R}^{d}\right) $ equipped with $%
\mu $ is a Gaussian measure s.t. $X$ has zero-mean, independent components
and that the covariance $R$ of $X$ has finite $\rho $-variationen for some $%
\rho \in \lbrack 1,2).$ Let $\mathbf{X}$ be its enhanced Gaussian process
with sample paths in a $p$-rough paths space, $p>2\rho $. Assume $\mathcal{H}
$ has complementary Young regularity, so that Cameron--Martin paths enjoy
finite $q$-variation regularity, $q\leq p$ and $\frac{1}{p}+\frac{1}{q}>1$.
Then there exists a set $\tilde{E}\subset E$ of full measure with the
following property: If%
\begin{equation}
\left\vert \left\vert \left\vert \mathbf{X}\left( \omega -h\right)
\right\vert \right\vert \right\vert _{p-var;\left[ 0,T\right] }\leq \alpha
^{1/p}  \label{eqn_assumption_cll}
\end{equation}%
for all $\omega \in \tilde{E}$, $h\in \mathcal{H}$ and some $\alpha >0$ then%
\begin{equation*}
C\left\vert h\right\vert _{q-var;\left[ 0,T\right] }\geq \alpha ^{1/p}\left( 
\bar{N}_{\beta ,\left[ 0,T\right] }\left( \mathbf{X}\left( \omega \right)
\right) \right) ^{1/q}
\end{equation*}%
where $\beta =2^{p}\left[ p\right] \alpha $ and $C$ depends only on $p$ and $%
q$.
\end{proposition}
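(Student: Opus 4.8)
The plan is to convert the global hypothesis \eqref{eqn_assumption_cll} into a one-interval estimate by exploiting the greedy partition hard-wired into $\bar N_{\beta,[0,T]}$, and then to run a deterministic translation estimate for rough paths on each interval of that partition, after rescaling it to unit size. First, write $N:=\bar N_{\beta,[0,T]}(\mathbf X(\omega))$ and assume $N\ge1$ (otherwise the right-hand side vanishes and there is nothing to prove). Let $0=\tau_0<\tau_1<\dots<\tau_N<T$ be the points $\tau_i(\beta)$ associated with the control $\bar\omega_{\mathbf X(\omega)}(s,t)=|||\mathbf X(\omega)|||_{p\text{-var};[s,t]}^p$ at level $\beta$, as in the definition of $\bar N$. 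Because controls are continuous and $\tau_i<T$ for all $i\le N$, we have $\bar\omega_{\mathbf X(\omega)}(\tau_i,\tau_{i+1})=\beta$, i.e. $|||\mathbf X(\omega)|||_{p\text{-var};[\tau_i,\tau_{i+1}]}=\beta^{1/p}$ for $i=0,\dots,N-1$, whereas \eqref{eqn_assumption_cll} restricted to $[\tau_i,\tau_{i+1}]$ gives $|||\mathbf X(\omega-h)|||_{p\text{-var};[\tau_i,\tau_{i+1}]}\le\alpha^{1/p}$. It then suffices to prove a local bound $\left\vert h\right\vert_{q\text{-var};[\tau_i,\tau_{i+1}]}\ge c\,\alpha^{1/p}$ for every $i<N$ with $c=c(p,q)>0$: summing the $q$-th powers and using superadditivity of $(s,t)\mapsto\left\vert h\right\vert_{q\text{-var};[s,t]}^q$ then gives $\left\vert h\right\vert_{q\text{-var};[0,T]}^q\ge Nc^q\alpha^{q/p}$, which is the assertion with $C=1/c$.

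For the local bound we use the full-measure set $\tilde E$: on $\tilde E$ the enhancement intertwines with Cameron--Martin translation, so $\mathbf X(\omega)=T_h\,\mathbf X(\omega-h)$, where $T_h$ denotes the \emph{deterministic} translation of a $p$-rough path by $h\in\mathcal H\hookrightarrow C^{q\text{-var}}$; this is well defined since $\tfrac1p+\tfrac1q>1$ (note $q<2$), and the exceptional set may be chosen independently of $h$ --- part of the Gaussian rough path machinery of \cite{FV10}, implicit in \cite{CLL}. Now apply the dilation $\delta_\lambda$ with $\lambda:=\alpha^{-1/p}$; using $\delta_\lambda\circ T_g=T_{\lambda g}\circ\delta_\lambda$ and setting $\mathbf z:=\delta_\lambda\mathbf X(\omega-h)$ we get $\delta_\lambda\mathbf X(\omega)=T_{\lambda h}\,\mathbf z$, and homogeneity of $|||\cdot|||_{p\text{-var}}$ yields, on each $[\tau_i,\tau_{i+1}]$,
\begin{equation*}
|||\mathbf z|||_{p\text{-var};[\tau_i,\tau_{i+1}]}\le1\qquad\text{and}\qquad|||T_{\lambda h}\,\mathbf z|||_{p\text{-var};[\tau_i,\tau_{i+1}]}=\lambda\beta^{1/p}=2[p]^{1/p}\ge2 .
\end{equation*}
The remaining input is a deterministic translation estimate, derived from the translation-of-rough-paths (Young-pairing) estimates of \cite{FV10}: there is a continuous $Q=Q_{p,q}\colon[0,\infty)^2\to[0,\infty)$ with $Q(\cdot,0)\equiv0$ such that $|||T_g\mathbf y|||_{p\text{-var};[s,t]}\le|||\mathbf y|||_{p\text{-var};[s,t]}+Q\bigl(|||\mathbf y|||_{p\text{-var};[s,t]},\left\vert g\right\vert_{q\text{-var};[s,t]}\bigr)$ for every $p$-rough path $\mathbf y$, every $q$-variation path $g$, and every $[s,t]$. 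Choose $c_0=c_0(p,q)>0$ small enough that $Q(b,t)<1$ for all $b\le1$ and $t\le c_0$ (possible since $Q$ is continuous and vanishes on $\{t=0\}$). Then $|||\mathbf z|||_{p\text{-var};[\tau_i,\tau_{i+1}]}\le1$ together with $|||T_{\lambda h}\mathbf z|||_{p\text{-var};[\tau_i,\tau_{i+1}]}\ge2$ forces $\left\vert\lambda h\right\vert_{q\text{-var};[\tau_i,\tau_{i+1}]}> c_0$, i.e. $\left\vert h\right\vert_{q\text{-var};[\tau_i,\tau_{i+1}]}\ge c_0\,\alpha^{1/p}$; this is the desired local bound with $c=c_0$.

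The two substantive points, and where I expect the work to lie, are: (a) the translation identity $\mathbf X(\omega)=T_h\mathbf X(\omega-h)$ off a null set \emph{independent of} $h$, which rests on the Cameron--Martin translation theory for Gaussian rough paths; and (b) the translation estimate of the previous paragraph, to be established uniformly over all $p$ and all levels $[p]$. The point is that by working throughout with the homogeneous norm $|||\cdot|||_{p\text{-var}}$ and by applying the translation estimate only on a greedy interval normalised to unit size, no case distinction on $[p]$ ever arises; this is exactly what lets one dispense with the technical condition ``$p>q[p]$'' of \cite[Theorem 6.2]{CLL}, and making (b) precise with one argument valid for every $p$ is the delicate part.
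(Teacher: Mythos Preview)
Your proposal is correct, and the global architecture---reduce to a local lower bound $|h|_{q\text{-var};[\tau_i,\tau_{i+1}]}\ge c\,\alpha^{1/p}$ on each greedy interval, then sum $q$-th powers using superadditivity---is exactly the paper's. The difference lies in how the local bound is obtained. The paper pigeonholes on the level: from $|||\mathbf X(\omega)|||_{p\text{-var};[\tau_i,\tau_{i+1}]}^p=\beta$ it extracts a level $k$ with $\|\mathbf X^{(k)}(\omega)\|_{p/k\text{-var}}^{p/k}\ge\beta/[p]$, then expands the translation formula at that fixed level, bounds each cross integral by Young--H\"older estimates, and arrives at a polynomial inequality in $|h|_{q\text{-var}}$ and $\alpha^{1/p}$ whose positive root delivers $c$. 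You instead normalise by the dilation $\delta_{\alpha^{-1/p}}$ (using the identity $\delta_\lambda\circ T_g=T_{\lambda g}\circ\delta_\lambda$ and homogeneity of $|||\cdot|||$) so that the translated path has norm $\le 1$ while its $T_{\lambda h}$-image has norm $2[p]^{1/p}\ge 2$; a continuity modulus $Q$ for the translation operator then forces $|\lambda h|_{q\text{-var}}$ to exceed a threshold $c_0(p,q)$.

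Your route is conceptually cleaner---no pigeonhole, no level chosen---and, incidentally, shows that the factor $[p]$ in $\beta=2^p[p]\alpha$ is an artefact of the paper's level-selection step and could be dropped. The cost is that the ``black box'' $Q$ is not a ready citation from \cite{FV10}: to establish it in the precise form $|||T_g\mathbf y|||_{p\text{-var}}\le |||\mathbf y|||_{p\text{-var}}+Q(|||\mathbf y|||_{p\text{-var}},|g|_{q\text{-var}})$ with $Q(\cdot,0)\equiv 0$ you must show level-wise that $\|(T_g\mathbf y)^{(k)}\|_{p/k\text{-var}}\le\|\mathbf y^{(k)}\|_{p/k\text{-var}}+\sum_{l=1}^{k}c_{k,l}|g|_{q\text{-var}}^l|||\mathbf y|||_{p\text{-var}}^{k-l}$, which is precisely the Young--H\"older cross-term estimate the paper writes out (with the roles of $\mathbf X(\omega)$ and $\mathbf X(\omega-h)$ interchanged). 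So the hard analytic content is the same; you have relocated it from the foreground to a lemma, which is a legitimate and tidy repackaging.
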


\begin{proof}
Set%
\begin{equation*}
\tilde{E}=\left\{ \omega :T_{h}\left( \mathbf{X}\left( \omega \right)
\right) =\mathbf{X}\left( \omega +h\right) \text{ for all }h\in \mathcal{H}%
\right\} .
\end{equation*}%
From \cite[Lemma 15.58]{FV10} we know that $\tilde{E}$ has full measure.
Define the random partition $\left( \tau _{i}\right) _{i=0}^{\infty }=\left(
\tau _{i}\left( \beta \right) \right) _{i=0}^{\infty }$ for the control $%
\bar{\omega}_{\mathbf{X}}$. Let $h\in \mathcal{H}$ and assume that $\left( %
\ref{eqn_assumption_cll}\right) $ holds. We claim that there is a constant $%
C_{p,q}$ such that%
\begin{equation}
C_{p,q}\left\vert h\right\vert _{q-var;\left[ \tau _{i},\tau _{i+1}\right]
}\geq \alpha ^{1/p}\quad \text{for all }i=0,\ldots ,\bar{N}_{\beta ,\left[
0,T\right] }\left( \mathbf{X}\right) -1.  \label{eqn_mainclaim}
\end{equation}%
The statement then follows from%
\begin{equation*}
C_{p,q}^{q}\left\vert h\right\vert _{q-var;\left[ 0,T\right] }^{q}\geq
C_{p,q}^{q}\sum_{i=0}^{\bar{N}_{\beta ,\left[ 0,T\right] }\left( \mathbf{X}%
\right) -1}\left\vert h\right\vert _{q-var;\left[ \tau _{i},\tau _{i+1}%
\right] }^{q}\geq \alpha ^{q/p}\bar{N}_{\beta ,\left[ 0,T\right] }\left( 
\mathbf{X}\right) .
\end{equation*}%
To show $\left( \ref{eqn_mainclaim}\right) $, we first notice that for every 
$i=0,\ldots ,\bar{N}_{\beta ,\left[ 0,T\right] }\left( \mathbf{X}\right) -1$,%
\begin{equation*}
\beta =|||\mathbf{X}\left( \omega \right) |||_{p-var;\left[ \tau _{i},\tau
_{i+1}\right] }^{p}=\sum_{k=1}^{\left[ p\right] }\left\Vert \mathbf{X}%
^{\left( k\right) }\left( \omega \right) \right\Vert _{p/k-var;\left[ \tau
_{i},\tau _{i+1}\right] }^{p/k}.
\end{equation*}%
Fix $i$. Then there is a $k\in \left\{ 1,\ldots ,\left[ p\right] \right\} $
such that $\left\Vert \mathbf{X}^{\left( k\right) }\left( \omega \right)
\right\Vert _{p/k-var;\left[ \tau _{i},\tau _{i+1}\right] }^{p/k}\geq \frac{%
\beta }{\left[ p\right] }$. Let $D=\left( t_{j}\right) _{j=0}^{M}$ be any
dissection of $\left[ \tau _{i},\tau _{i+1}\right] $. We define the vector%
\begin{equation*}
\mathbf{X}^{\left( k\right) }\left( \omega \right) :=\left( \mathbf{X}%
^{\left( k\right) }\left( \omega \right) _{t_{0},t_{1}},\ldots ,\mathbf{X}%
^{\left( k\right) }\left( \omega \right) _{t_{M-1},t_{M}}\right)
\end{equation*}%
and do the same for $\mathbf{X}^{\left( k\right) }\left( \omega -h\right) $
and for the mixed iterated integrals%
\begin{equation*}
\int_{\Delta ^{k}}dZ^{i_{1}}\otimes \ldots \otimes dZ^{i_{k}}\quad \text{%
where }Z^{i}=\left\{ 
\begin{array}{ccc}
X & \text{if} & i=0 \\ 
h & \text{if} & i=1%
\end{array}%
\right. .
\end{equation*}%
We have then%
\begin{equation*}
\mathbf{X}^{\left( k\right) }\left( \omega -h\right) =\sum_{\left(
i_{1},\ldots ,i_{k}\right) \in \left\{ 0,1\right\} ^{k}}\left( -1\right)
^{i_{1}+\ldots +i_{k}}\int_{\Delta ^{k}}dZ^{i_{1}}\otimes \ldots \otimes
dZ^{i_{k}}
\end{equation*}%
and by the triangle inequality,%
\begin{eqnarray}
&&\left\vert \int_{\Delta ^{k}}dh\otimes \ldots \otimes dh\right\vert
_{l^{p/k}}  \notag \\
&\geq &\left\vert \mathbf{X}^{\left( k\right) }\left( \omega \right)
\right\vert _{l^{p/k}}-\left( \left\vert \mathbf{X}^{\left( k\right) }\left(
\omega -h\right) \right\vert _{l^{p/k}}+\sum_{\substack{ \left( i_{1},\ldots
,i_{k}\right) \in \left\{ 0,1\right\} ^{k}  \\ 0<i_{1}+\ldots +i_{k}<k}}%
\left\vert \int_{\Delta ^{k}}dZ^{i_{1}}\otimes \ldots \otimes
dZ^{i_{k}}\right\vert _{l^{p/k}}\right) .  \label{eqn_rearranged_sum}
\end{eqnarray}%
Since $q<2$ and $p\geq q$, we can use Young and super-additivity of $%
\left\vert h\right\vert _{q-var}^{p}$ to see that%
\begin{eqnarray*}
\left\vert \int_{\Delta ^{k}}dh\otimes \ldots \otimes dh\right\vert
_{l^{p/k}}^{p/k} &\leq &c_{q,k}^{p/k}\sum_{j}\left\vert h\right\vert
_{q-var; \left[ t_{j},t_{j+1}\right] }^{p} \\
&\leq &c_{q,k}^{p/k}\left\vert h\right\vert _{q-var;\left[ \tau _{i},\tau
_{i+1}\right] }^{p}.
\end{eqnarray*}%
For the mixed integrals one has for any $u<v$%
\begin{equation*}
\left\vert \int_{\Delta _{u,v}^{k}}dZ^{i_{1}}\otimes \ldots \otimes
dZ^{i_{k}}\right\vert \leq c_{k,l,p,q}\left\vert h\right\vert _{q-var;\left[
u,v\right] }^{l}|||\mathbf{X}\left( \omega \right) |||_{p-var;\left[ u,v%
\right] }^{k-l}
\end{equation*}%
where $l=i_{1}+\ldots +i_{k}$ (this follows from Theorem 9.26 in \cite{FV10}%
). Hence we have, using H\"{o}lder's inequality and super-additivity%
\begin{eqnarray*}
\left\vert \int_{\Delta ^{k}}dZ^{i_{1}}\otimes \ldots \otimes
dZ^{i_{k}}\right\vert _{l^{p/k}}^{p/k} &\leq
&c_{k,l,p,q}^{p/k}\sum_{j}\left\vert h\right\vert _{q-var;\left[
t_{j},t_{j+1}\right] }^{\frac{lp}{k}}|||\mathbf{X}\left( \omega \right)
|||_{p-var;\left[ t_{j},t_{j+1}\right] }^{\frac{\left( k-l\right) p}{k}} \\
&\leq &c_{k,l,p,q}^{p/k}\left( \sum_{j}\left\vert h\right\vert _{q-var;\left[
t_{j},t_{j+1}\right] }^{p}\right) ^{l/k}\left( \sum_{j}|||\mathbf{X}\left(
\omega \right) |||_{p-var;\left[ t_{j},t_{j+1}\right] }^{p}\right) ^{\frac{%
k-l}{k}} \\
&\leq &c_{k,l,p,q}^{p/k}\left\vert h\right\vert _{q-var;\left[ \tau
_{i},\tau _{i+1}\right] }^{\frac{pl}{k}}|||\mathbf{X}\left( \omega \right)
|||_{p-var;\left[ \tau _{i},\tau _{i+1}\right] }^{\frac{p\left( k-l\right) }{%
k}}
\end{eqnarray*}%
and hence%
\begin{eqnarray*}
\left\vert \int_{\Delta ^{k}}dZ^{i_{1}}\otimes \ldots \otimes
dZ^{i_{k}}\right\vert _{l^{p/k}} &\leq &c_{k,p,q}\left\vert h\right\vert
_{q-var;\left[ \tau _{i},\tau _{i+1}\right] }^{l}|||\mathbf{X}\left( \omega
\right) |||_{p-var;\left[ \tau _{i},\tau _{i+1}\right] }^{k-l} \\
&=&c_{k,l,p,q}\left\vert h\right\vert _{q-var;\left[ \tau _{i},\tau _{i+1}%
\right] }^{l}\beta ^{\frac{k-l}{p}}.
\end{eqnarray*}%
By assumption,%
\begin{equation*}
\left\vert \mathbf{X}^{\left( k\right) }\left( \omega -h\right) \right\vert
_{l^{p/k}}\leq |||\mathbf{X}\left( \omega -h\right) |||_{p-var;\left[ 0,T%
\right] }^{k}\leq \alpha ^{k/p}.
\end{equation*}%
Plugging this into $\left( \ref{eqn_rearranged_sum}\right) $ yields%
\begin{equation*}
c_{q,k}\left\vert h\right\vert _{q-var;\left[ \tau _{i},\tau _{i+1}\right]
}^{k}\geq \left\vert \mathbf{X}^{\left( k\right) }\left( \omega \right)
\right\vert _{l^{p/k}}-\left( \alpha
^{k/p}+\sum_{l=1}^{k-1}c_{k,l,p,q}\left\vert h\right\vert _{q-var;\left[
\tau _{i},\tau _{i+1}\right] }^{l}\beta ^{\frac{k-l}{p}}\right) .
\end{equation*}%
Now we can take the supremum over all dissections $D$ and obtain, using $%
\left\Vert \mathbf{X}^{\left( k\right) }\left( \omega \right) \right\Vert
_{p/k-var;\left[ \tau _{i},\tau _{i+1}\right] }\geq \left( \frac{\beta }{%
\left[ p\right] }\right) ^{k/p}$,%
\begin{eqnarray*}
c_{q,k}\left\vert h\right\vert _{q-var;\left[ \tau _{i},\tau _{i+1}\right]
}^{k} &\geq &\left\Vert \mathbf{X}^{\left( k\right) }\left( \omega \right)
\right\Vert _{p/k-var;\left[ \tau _{i},\tau _{i+1}\right] }-\left( \alpha
^{k/p}+\sum_{l=1}^{k-1}c_{k,l,p,q}\left\vert h\right\vert _{q-var;\left[
\tau _{i},\tau _{i+1}\right] }^{l}\beta ^{\frac{k-l}{p}}\right) \\
&\geq &\left( \frac{\beta }{\left[ p\right] }\right) ^{k/p}-\left( \alpha
^{k/p}+\sum_{l=1}^{k-1}c_{k,l,p,q}\left\vert h\right\vert _{q-var;\left[
\tau _{i},\tau _{i+1}\right] }^{l}\beta ^{\frac{k-l}{p}}\right) \\
&=&\left( 2^{k}-1\right) \alpha ^{k/p}-\left( \sum_{l=1}^{k-1}\left( 2\left[
p\right] ^{1/p}\right) ^{k-l}c_{k,l,p,q}\left\vert h\right\vert _{q-var;%
\left[ \tau _{i},\tau _{i+1}\right] }^{l}\alpha ^{\frac{k-l}{p}}\right) .
\end{eqnarray*}%
By making constants larger if necessary, we may assume that there is a
constant $c_{k,p,q}$ such that%
\begin{equation*}
\left\vert h\right\vert _{q-var;\left[ \tau _{i},\tau _{i+1}\right]
}^{k}\geq \frac{\left( 2^{k}-1\right) }{c_{k,p,q}}\alpha ^{k/p}-\left(
\sum_{l=1}^{k-1}\left\vert h\right\vert _{q-var;\left[ \tau _{i},\tau _{i+1}%
\right] }^{l}\alpha ^{\frac{k-l}{p}}\right) .
\end{equation*}%
This implies that there is a constant $C_{k,p,q}$ depending on $c_{k,p,q}$
such that%
\begin{equation*}
C_{k}\left\vert h\right\vert _{q-var;\left[ \tau _{i},\tau _{i+1}\right]
}\geq \alpha ^{1/p}.
\end{equation*}%
Setting $C_{p,q}=\max \left\{ C_{1,p,q},\ldots ,C_{\left[ p\right]
,p,q}\right\} $ finally shows $\left( \ref{eqn_mainclaim}\right) $.
\end{proof}

Now we come to the main result.

\begin{corollary}
\label{cor_cll_results_single_gaussian}Let $X$ be a centred Gaussian process
in $\mathbb{R}^{d}$ with independent components and covariance $R_{X}$ of
finite $\rho $-variation, $\rho <2$. Consider the Gaussian $p$-rough paths $%
\mathbf{X}$ for $p>2\rho $ and assume that there is a continuous embedding%
\begin{equation*}
\iota \colon H\hookrightarrow C^{q-var}
\end{equation*}%
where $\frac{1}{p}+\frac{1}{q}>1$ and let $K\geq $ $\left\Vert \iota
\right\Vert _{op}$. Then for every~$\alpha >0$, $\bar{N}_{\alpha ,\left[ 0,T%
\right] }\left( \mathbf{X}\right) $ has a Weibull tail with shape $2/q$.
More precisely, there is a constant $C=C\left( p,q\right) $ such that%
\begin{equation*}
\mu \left\{ \bar{N}_{\alpha ,\left[ 0,T\right] }\left( \mathbf{X}\right)
>r\right\} \leq \exp \left\{ -\frac{1}{2}\left( \hat{a}+\frac{\alpha
^{1/p}r^{1/q}}{CK}\right) ^{2}\right\}
\end{equation*}%
for every $r>0$ where $\hat{a}>-\infty $ is chosen such that%
\begin{equation*}
\hat{a}\leq \Phi ^{-1}\mu \left\{ |||\mathbf{X}|||_{p-var;\left[ 0,T\right]
}^{p}\leq \frac{\alpha }{2^{p}\left[ p\right] }\right\} .
\end{equation*}
\end{corollary}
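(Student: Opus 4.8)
The plan is to combine the deterministic estimate of Proposition \ref{prop_cll_theorem} with the Borell-type Corollary \ref{cor_conseq_borell}. Given $\alpha>0$ as in the statement, I would first apply Proposition \ref{prop_cll_theorem} with the level there chosen to be $\alpha_0:=\alpha/(2^p[p])$, so that the auxiliary level $\beta=2^p[p]\alpha_0$ appearing in that proposition is exactly our $\alpha$. This yields a full-measure set $\tilde E\subset E$ and a constant $C_0=C_0(p,q)$ such that, for every $\omega\in\tilde E$ and every $h\in\mathcal H$,
\[
|||\mathbf X(\omega-h)|||_{p-var;[0,T]}\le\alpha_0^{1/p}
\quad\Longrightarrow\quad
C_0\,|h|_{q-var;[0,T]}\ \ge\ \alpha_0^{1/p}\,\big(\bar N_{\alpha,[0,T]}(\mathbf X(\omega))\big)^{1/q}.
\]
By complementary Young regularity, $|h|_{q-var;[0,T]}\le\|\iota\|_{op}\,\|h\|_{\mathcal H}\le K\,\|h\|_{\mathcal H}$, so the displayed implication has precisely the shape demanded by Corollary \ref{cor_conseq_borell}, with $f(\omega):=|||\mathbf X(\omega)|||_{p-var;[0,T]}$, $g(\omega):=\alpha_0^{1/p}\,(\bar N_{\alpha,[0,T]}(\mathbf X(\omega)))^{1/q}$, threshold $a:=\alpha_0^{1/p}$, scale $\sigma:=C_0K$, and the null set taken to be $\tilde E^{c}$.

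It then remains to check the positivity hypothesis and to unwind the conclusion. By Lemma \ref{lemma_Aa_pos_meas} (complementary Young regularity being in force) the set $A_a=\{f\le a\}=\{|||\mathbf X|||_{p-var;[0,T]}^{p}\le\alpha/(2^p[p])\}$ has positive $\mu$-measure, so any $\hat a\le\Phi^{-1}\mu(A_a)$ is admissible and necessarily $\hat a>-\infty$; this is exactly the $\hat a$ named in the statement. Corollary \ref{cor_conseq_borell} then gives, for every $\varrho>0$,
\[
\mu\{g>\varrho\}\ \le\ \exp\!\left(-\tfrac12\Big(\hat a+\tfrac{\varrho}{C_0K}\Big)^{2}\right).
\]
Since $\{g>\varrho\}=\{\bar N_{\alpha,[0,T]}(\mathbf X)>(\varrho\,\alpha_0^{-1/p})^{q}\}$, I would set $\varrho=\alpha_0^{1/p}r^{1/q}$ and use $\alpha_0^{1/p}=\alpha^{1/p}/(2[p]^{1/p})$, so that the purely dimensional factor $2[p]^{1/p}$ gets absorbed into a new constant $C=C(p,q)$; this produces exactly the asserted tail bound. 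Letting $r\to\infty$, the right-hand side decays like $\exp(-c\,r^{2/q})$, i.e. $\bar N_{\alpha,[0,T]}(\mathbf X)$ has a Weibull tail of shape $2/q$.

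The only mildly delicate point is the constant- and exponent-bookkeeping in the first step: one must keep the substitution $\alpha\mapsto\alpha/(2^p[p])$ consistent throughout, and verify that the dimensional constants $2^p[p]$ and $(2^p[p])^{1/p}$ land, respectively, in the level defining $\hat a$ and in $C(p,q)$, precisely as the statement prescribes. No new obstacle arises here — all the substance has already been carried by Proposition \ref{prop_cll_theorem} (the deterministic $p$-variation / Cameron–Martin estimate) and by Corollary \ref{cor_conseq_borell} (Borell's isoperimetric inequality), and the present argument is only their assembly.
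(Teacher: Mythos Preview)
Your proposal is correct and follows essentially the same route as the paper: choose the level $\alpha_0=\alpha/(2^{p}[p])$ in Proposition \ref{prop_cll_theorem} so that its $\beta$ becomes the given $\alpha$, invoke Lemma \ref{lemma_Aa_pos_meas} for positive mass of $A_{a}$, and then apply Corollary \ref{cor_conseq_borell} with $f=|||\mathbf X|||_{p\text{-var};[0,T]}$, $g=\alpha_0^{1/p}\bar N_{\alpha,[0,T]}(\mathbf X)^{1/q}$, and $\sigma=C_0K$. The constant bookkeeping you outline (absorbing $2[p]^{1/p}$ into $C(p,q)$) matches the paper's, so there is no gap.
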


\begin{proof}
Set%
\begin{equation*}
A_{a}=\left\{ \omega :|||\mathbf{X}\left( \omega \right) |||_{p-var;\left[
0,T\right] }\leq a^{1/p}\right\} .
\end{equation*}%
Lemma \ref{lemma_Aa_pos_meas} guarantees that $A_{a}$ has positive measure
for any $a>0$. From Proposition \ref{prop_cll_theorem} we know that there is
a set $\tilde{E}$ of full measure such that whenever $\left\Vert \mathbf{X}%
\left( \omega -h\right) \right\Vert _{p-var;\left[ 0,T\right] }\leq a^{1/p}$
for $\omega \in \tilde{E}$, $h\in \mathcal{H}$ and $a>0$ we have%
\begin{equation*}
a^{1/p}\left( \bar{N}_{\beta ,\left[ 0,T\right] }\left( \mathbf{X}\right)
\right) ^{1/q}\leq c_{p,q}\left\vert h\right\vert _{q-var;\left[ 0,T\right]
}\leq c_{p,q}\left\Vert \iota \right\Vert _{op}\left\Vert h\right\Vert _{%
\mathcal{H}}
\end{equation*}%
where $\beta =2^{p}\left[ p\right] a$. Setting $a=\alpha /\left( 2^{p}\left[
p\right] \right) $, Corollary \ref{cor_conseq_borell} shows that%
\begin{equation*}
\mu \left( \left\{ \omega :\bar{N}_{\alpha ,\left[ 0,T\right] }\left( 
\mathbf{X}\left( \omega \right) \right) >r\right\} \right) \leq \exp \left(
-\left( \frac{\hat{a}}{\sqrt{2}}+\frac{\alpha ^{1/p}r^{1/q}}{2\sqrt{2}c_{p,q}%
\left[ p\right] ^{1/p}\left\Vert \iota \right\Vert _{op}}\right) ^{2}\right)
\end{equation*}%
where $\hat{a}\leq \Phi ^{-1}\mu \left( A_{a}\right) $.
\end{proof}

\begin{remark}
Corollary \ref{cor_cll_results_single_gaussian} remains valid if one
replaces $\bar{N}_{\alpha ,\left[ 0,T\right] }$ by $N_{\alpha ,\left[ 0,T%
\right] }$ in the statement. This follows directly from $\left( \ref%
{eqn_equiv_hom_rp_norms}\right) $ and Lemma \ref{lemma_transitivity_of_N} by
putting the constant $C$ of $\left( \ref{eqn_equiv_hom_rp_norms}\right) $ in
the constant $C_{p,q}$ of the respective corollaries.
\end{remark}

\begin{remark}
In \cite{FV10}, Proposition 15.7 is is shown that%
\begin{equation*}
\left\vert h\right\vert _{\rho -var}\leq \sqrt{V_{\rho -\text{var}}\left(
R_{X};\left[ 0,T\right] ^{2}\right) }\left\Vert h\right\Vert _{\mathcal{H}}
\end{equation*}%
holds for all $h\in \mathcal{H}$. Hence in the regime $\rho \in \lbrack
1,3/2)$ we can always choose $q=\rho $ and the conditions of Corollary \ref%
{cor_cll_results_single_gaussian} are fulfilled. For the fractional Brownian
motion with Hurst parameter $H$ one can show that $\rho =\frac{1}{2H}$ and $%
q>\frac{1}{H+1/2}$ are valid choices (cf. \cite{FV10}, chapter 15) and the
results of Corollary \ref{cor_cll_results_single_gaussian} remain valid
provided $H>1/4$.
\end{remark}

\begin{remark}
If $\mathbf{X}$ is a Gaussian rough paths, we know that $\left\Vert \mathbf{X%
}\right\Vert _{p-var}$ has a Gaussian tail (or a Weibull tail with shape
parameter 2), e.g. obtained by a non-linear Fernique Theorem, cf. \cite{FO10}%
, whereas Corollary \ref{cor_cll_results_single_gaussian} combined with
Lemma \ref{lemma_pvar_dom_by_N} only gives that $\left\Vert \mathbf{X}%
\right\Vert _{p-var}$ has a Weibull tail with shape $2/q$ and thus the
estimate is not sharp for $q>1$. On the other hand, Lemma \ref%
{lemma_pvar_dom_by_N} is robust and also available in situations where
Fernique- (or Borell-) type arguments are not directly available, e.g. in a
non-Gaussian setting.
\end{remark}

\section{Transitivity of the tail estimates under locally linear maps}

Existing maps for rough integrals and RDE solutions suggest that we consider
maps $\Psi $ such that $\left\Vert \Psi \left( \mathbf{x}\right) \right\Vert
_{p-var;I}\leq const.\left\Vert \mathbf{x}\right\Vert _{p-var;I}$ uniformly
over all intervals $I\subset \left[ 0,T\right] $ where $\left\Vert \mathbf{x}%
\right\Vert _{p-var;I}\leq R$, $R>0$. More formally,

\begin{definition}
\label{definition_loc_linear}We call $\Psi \colon C^{p-var}\left( \left[ 0,T%
\right] ;G^{N}\left( \mathbb{R}^{d}\right) \right) \rightarrow $ $%
C^{p-var}\left( \left[ 0,T\right] ;G^{M}\left( \mathbb{R}^{e}\right) \right) 
$ a \emph{locally linear map} if there is a $R\in (0,\infty ]$ such that%
\begin{equation}
\left\Vert \Psi \right\Vert _{R}:=\inf_{C>0}\left\{ \left\Vert \Psi \left( 
\mathbf{x}\right) \right\Vert _{p-var;[u,v]}\leq C\left\Vert \mathbf{x}%
\right\Vert _{p-var;[u,v]}\text{ for all }\left( u,v\right) \in \Delta ,%
\mathbf{x}\text{ s.t. }\left\Vert \mathbf{x}\right\Vert _{p-var;[u,v]}\leq R%
\text{ }\right\}  \notag
\end{equation}%
is finite.
\end{definition}

\begin{remark}
\begin{enumerate}
\item 

\item For $\lambda \in \mathbb{R}$, we denote by $\delta _{\lambda }$ the
dilation map. Set $\left( \delta _{\lambda }\Psi \right) \colon \mathbf{x}%
\mapsto \delta _{\lambda }\Psi \left( \mathbf{x}\right) $. Then $\left\Vert
\cdot \right\Vert _{R}$ is homogeneous w.r.t. dilation, e.g. $\left\Vert
\delta _{\lambda }\Psi \right\Vert _{R}=\left\vert \lambda \right\vert
\left\Vert \Psi \right\Vert _{R}$.

\item If $\Psi $ commutes with the dilation map $\delta $ modulo $p$%
-variation, e.g. $\left\Vert \Psi \left( \delta _{\lambda }\mathbf{x}\right)
\right\Vert _{p\text{-var;}I}=\left\Vert \delta _{\lambda }\Psi \left( 
\mathbf{x}\right) \right\Vert _{p\text{-var;}I}$ for any $\mathbf{x}$, $%
\lambda \in \mathbb{R}$ and intervall $I\subset \left[ 0,T\right] $, we have 
$\left\Vert \Psi \right\Vert _{R}=\left\Vert \Psi \right\Vert _{\infty }$
for any $R>0$. An example of such a map is the Lyons lift map%
\begin{equation*}
S_{N}\colon C^{p-var}\left( \left[ 0,T\right] ;G^{[p]}\left( \mathbb{R}%
^{d}\right) \right) \rightarrow C^{p-var}\left( \left[ 0,T\right]
;G^{N}\left( \mathbb{R}^{d}\right) \right)
\end{equation*}%
for which we have $\left\Vert S_{N}\right\Vert _{\infty }\leq C\left(
N,p\right) <\infty $, c.f. \cite{FV10}, Theorem 9.5.

\item If $\phi \colon \left[ 0,T\right] \rightarrow \phi \left[ 0,T\right]
\subset \left[ 0,T\right] $ is a bijective, continuous and increasing
function and $\mathbf{x}$ a rough path, we set $\mathbf{x}_{t}^{\phi }=$ $%
\mathbf{x}_{\phi \left( t\right) }$ and call $\mathbf{x}^{\phi }$ a
reparametrization of $\mathbf{x}$. If $\Psi $ commutes with
reparametrization modulo $p$-variation, e.g. $\left\Vert \Psi \left( \mathbf{%
x}^{\phi }\right) \right\Vert _{p-var;I}=\left\Vert \Psi \left( \mathbf{x}%
\right) ^{\phi }\right\Vert _{p-var;I}$ for any $\mathbf{x}$, $\phi $ and
intervall $I\subset \left[ 0,T\right] $, we have%
\begin{equation*}
\left\Vert \Psi \right\Vert _{R}:=\inf_{C>0}\left\{ \left\Vert \Psi \left( 
\mathbf{x}\right) \right\Vert _{p-var;[0,T]}\leq C\left\Vert \mathbf{x}%
\right\Vert _{p-var;[0,T]}\text{ for all }\mathbf{x}\text{ s.t. }\left\Vert 
\mathbf{x}\right\Vert _{p-var;[0,T]}\leq R\text{ }\right\} .
\end{equation*}%
This follows by a standard reparametrization argument. Examples of such maps
are rough integration over $1$-forms, e.g. $\mathbf{x\mapsto }%
\int_{0}^{\cdot }\varphi \left( x\right) \,d\mathbf{x}$, and the It\={o}%
-Lyons map, e.g. $\Psi \left( \mathbf{x}\right) _{s,t}=\mathbf{y}_{s,t}$
where $\mathbf{y}$ solves $d\mathbf{y}=V\left( \mathbf{x}\right) \,d\mathbf{x%
}$ with initial condition $\mathbf{y}_{0}\in G^{\left[ p\right] }\left( 
\mathbb{R}^{e}\right) $. In this case,%
\begin{equation*}
\left\Vert \Psi \right\Vert _{\infty }=\sup_{\mathbf{x}}\frac{\left\Vert
\Psi \left( \mathbf{x}\right) \right\Vert _{p-var;\left[ 0,T\right] }}{%
\left\Vert \mathbf{x}\right\Vert _{p-var;\left[ 0,T\right] }}
\end{equation*}%
(where $0/0:=0$) and we find the usual operator norm. (Note that, however,
we can not speak of linear maps in this context since rough paths spaces are
typically non-linear.)

\item Clearly, if $\left\Vert \Psi \right\Vert _{\infty }<\infty $, $%
\left\Vert \Psi \left( \mathbf{X}\right) \right\Vert _{p-var;\left[ s,t%
\right] }$ inherits the integrability properties of $\left\Vert \mathbf{X}%
\right\Vert _{p-var;\left[ s,t\right] }$. However, for the most interesting
maps, e.g. the It\={o}-Lyons map, we will \emph{not} have $\left\Vert \Psi
\right\Vert _{\infty }<\infty $, but $\left\Vert \Psi \right\Vert
_{R}<\infty $ for any finite $R>0$. In a way, the purpose of this section is
to show that one still has transitivity of integrability if one considers $%
N_{\alpha ,\left[ s,t\right] }\left( \mathbf{X}\right) $ instead of $%
\left\Vert \mathbf{X}\right\Vert _{p-var;\left[ s,t\right] }$.
\end{enumerate}
\end{remark}

\begin{lemma}
Let $\Psi $ and $\Phi $ be locally linear maps with $\left\Vert \Psi
\right\Vert _{R}<\infty $ and $\left\Vert \Phi \right\Vert _{R\left\Vert
\Psi \right\Vert _{R}}<\infty $. Then $\Psi \circ \Phi $ is again locally
linear and%
\begin{equation*}
\left\Vert \Phi \circ \Psi \right\Vert _{R}\leq \left\Vert \Phi \right\Vert
_{R\left\Vert \Psi \right\Vert _{R}}\left\Vert \Psi \right\Vert _{R}.
\end{equation*}
\end{lemma}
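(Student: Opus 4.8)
The plan is simply to unwind Definition~\ref{definition_loc_linear}; nothing beyond that definition is needed, and the heart of the matter is a two-line chaining of the two local-linearity bounds. Before starting I would record the harmless observation that the infimum defining $\|\Psi\|_R$ is attained, i.e.\ that $C=\|\Psi\|_R$ is itself a valid choice of constant: if $\|\Psi\|_R+\varepsilon$ works for every $\varepsilon>0$, then $\|\Psi(\mathbf{x})\|_{p\text{-var};[u,v]}\le(\|\Psi\|_R+\varepsilon)\,\|\mathbf{x}\|_{p\text{-var};[u,v]}$ for all $(u,v)\in\Delta$ and all $\mathbf{x}$ with $\|\mathbf{x}\|_{p\text{-var};[u,v]}\le R$, and letting $\varepsilon\downarrow0$ removes the $\varepsilon$. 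The same applies to $\Phi$ at level $R\|\Psi\|_R$. This is what allows the two estimates to be chained without accumulating a spurious factor.

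Now fix $(u,v)\in\Delta$ and a rough path $\mathbf{x}$ with $\|\mathbf{x}\|_{p\text{-var};[u,v]}\le R$. Applying the bound for $\Psi$ on the interval $[u,v]$ gives
\[
\|\Psi(\mathbf{x})\|_{p\text{-var};[u,v]}\ \le\ \|\Psi\|_R\,\|\mathbf{x}\|_{p\text{-var};[u,v]}\ \le\ R\,\|\Psi\|_R .
\]
Hence $\mathbf{y}:=\Psi(\mathbf{x})$ satisfies $\|\mathbf{y}\|_{p\text{-var};[u,v]}\le R\|\Psi\|_R$, so it is an admissible input for the bound defining $\|\Phi\|_{R\|\Psi\|_R}$ on that same interval, and applying it yields
\[
\|(\Phi\circ\Psi)(\mathbf{x})\|_{p\text{-var};[u,v]}=\|\Phi(\mathbf{y})\|_{p\text{-var};[u,v]}\ \le\ \|\Phi\|_{R\|\Psi\|_R}\,\|\mathbf{y}\|_{p\text{-var};[u,v]}\ \le\ \|\Phi\|_{R\|\Psi\|_R}\,\|\Psi\|_R\,\|\mathbf{x}\|_{p\text{-var};[u,v]} .
\]
Since $(u,v)$ and $\mathbf{x}$ were arbitrary subject only to $\|\mathbf{x}\|_{p\text{-var};[u,v]}\le R$, the constant $C=\|\Phi\|_{R\|\Psi\|_R}\,\|\Psi\|_R$ is a valid choice in the definition of $\|\Phi\circ\Psi\|_R$. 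Since both factors are finite by hypothesis, $\Phi\circ\Psi$ is locally linear, and $\|\Phi\circ\Psi\|_R\le C=\|\Phi\|_{R\|\Psi\|_R}\,\|\Psi\|_R$, which is the asserted bound.

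There is no genuine obstacle here; the two points worth a moment's care are the attainment of the infimum noted above, and the fact that the inequality in Definition~\ref{definition_loc_linear} is a genuinely \emph{local}, per-interval statement, so it may be invoked on the interval $[u,v]$ where $\mathbf{x}$ has $p$-variation at most $R$ regardless of how $\mathbf{x}$, or its image $\Psi(\mathbf{x})$, behaves elsewhere on $[0,T]$. This locality is precisely what makes the composition argument go through even when $\|\Psi\|_\infty=\infty$, which is the relevant case for the maps one really cares about, such as the solution map of an RDE.
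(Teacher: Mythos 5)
Your proof is correct and follows essentially the same route as the paper's: apply the bound for $\Psi$ on $[u,v]$ to see that $\Psi(\mathbf{x})$ has $p$-variation at most $R\|\Psi\|_R$ there, then feed it into the bound for $\Phi$ at level $R\|\Psi\|_R$. Your preliminary remark that the infimum defining $\|\cdot\|_R$ is attained (by letting $\varepsilon\downarrow 0$) is a point the paper uses silently, and spelling it out is harmless and slightly more careful.
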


\begin{proof}
Let $\left\Vert \mathbf{x}\right\Vert _{p-var;[u,v]}\leq R$. Then $%
\left\Vert \Psi \left( \mathbf{x}\right) \right\Vert _{p-var;[u,v]}\leq
\left\Vert \Psi \right\Vert _{R}\left\Vert \mathbf{x}\right\Vert
_{p-var;[u,v]}\leq R\left\Vert \Psi \right\Vert _{R}$ which implies%
\begin{equation*}
\left\Vert \Phi \circ \Psi \left( \mathbf{x}\right) \right\Vert
_{p-var;[u,v]}\leq \left\Vert \Phi \right\Vert _{R\left\Vert \Psi
\right\Vert _{R}}\left\Vert \Psi \left( \mathbf{x}\right) \right\Vert
_{p-var;[u,v]}\leq \left\Vert \Phi \right\Vert _{R\left\Vert \Psi
\right\Vert _{R}}\left\Vert \Psi \right\Vert _{R}\left\Vert \mathbf{x}%
\right\Vert _{p-var;[u,v]}.
\end{equation*}
\end{proof}

The interesting property of locally linear maps is formulated in the next
proposition.

\begin{proposition}
\label{prop_main_deterministic}Let $\Psi \colon C^{p-var}\left( \left[ 0,T%
\right] ;G^{N}\left( \mathbb{R}^{d}\right) \right) \rightarrow $ $%
C^{p-var}\left( \left[ 0,T\right] ;G^{M}\left( \mathbb{R}^{e}\right) \right) 
$ be locally linear and $\left\Vert \Psi \right\Vert _{R}<\infty $ for some $%
R\in (0,\infty ]$. Then%
\begin{equation*}
N_{\alpha \left\Vert \Psi \right\Vert _{R}^{p},\left[ s,t\right] }\left(
\Psi \left( \mathbf{x}\right) \right) \leq N_{\alpha ,\left[ s,t\right]
}\left( \mathbf{x}\right)
\end{equation*}%
for any $s<t$ and $\alpha \in (0,R^{p}]$.
\end{proposition}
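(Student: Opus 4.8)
The plan is to deduce the estimate directly from the transitivity lemma, Lemma~\ref{lemma_transitivity_of_N}, applied to the two controls $\omega_{1}:=\omega_{\Psi(\mathbf{x})}$ and $\omega_{2}:=\omega_{\mathbf{x}}$ with constant $C:=\Vert\Psi\Vert_{R}^{p}$.

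First I would unpack the local linearity of $\Psi$ at the level of controls. By Definition~\ref{definition_loc_linear}, for every subinterval $[u,v]\subset[s,t]$ with $\Vert\mathbf{x}\Vert_{p\text{-var};[u,v]}\le R$ one has $\Vert\Psi(\mathbf{x})\Vert_{p\text{-var};[u,v]}\le C'\,\Vert\mathbf{x}\Vert_{p\text{-var};[u,v]}$ for every $C'>\Vert\Psi\Vert_{R}$, hence also for $C'=\Vert\Psi\Vert_{R}$ itself (let $C'\downarrow\Vert\Psi\Vert_{R}$). Taking $p$-th powers, this reads
\begin{equation*}
\omega_{\Psi(\mathbf{x})}(u,v)\le\Vert\Psi\Vert_{R}^{p}\,\omega_{\mathbf{x}}(u,v)\qquad\text{whenever}\quad\omega_{\mathbf{x}}(u,v)\le R^{p}.
\end{equation*}

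Next I would invoke the standing hypothesis $\alpha\in(0,R^{p}]$: if $\omega_{\mathbf{x}}(u,v)\le\alpha$ then in particular $\omega_{\mathbf{x}}(u,v)\le R^{p}$, so $\Vert\mathbf{x}\Vert_{p\text{-var};[u,v]}\le R$ and the displayed comparison applies. Consequently $\omega_{1}(u,v)\le C\,\omega_{2}(u,v)$ holds whenever $\omega_{2}(u,v)\le\alpha$, which is precisely the hypothesis of Lemma~\ref{lemma_transitivity_of_N} (with threshold $\alpha$ and constant $C=\Vert\Psi\Vert_{R}^{p}$). That lemma then yields $N_{C\alpha,[s,t]}(\omega_{1})\le N_{\alpha,[s,t]}(\omega_{2})$, i.e.\ $N_{\alpha\Vert\Psi\Vert_{R}^{p},[s,t]}(\Psi(\mathbf{x}))\le N_{\alpha,[s,t]}(\mathbf{x})$ after unwinding the notational convention $N_{\cdot}(\mathbf{x})=N_{\cdot}(\omega_{\mathbf{x}})$.

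I do not expect a genuine obstacle here: the whole content is this reduction to Lemma~\ref{lemma_transitivity_of_N}, for which the only things to verify are that the $p$-variation bound coming from local linearity survives raising to the power $p$ (it does, since $t\mapsto t^{p}$ is increasing) and that the constraint $\alpha\le R^{p}$ ensures this bound is available on every interval $[\tau_{i}(\alpha),\tau_{i+1}(\alpha)]$ produced by the greedy partition underlying $N_{\alpha,[s,t]}(\omega_{\mathbf{x}})$. The one mild point of care is that the infimum defining $\Vert\Psi\Vert_{R}$ is attained, so the comparison is valid with the constant $\Vert\Psi\Vert_{R}$ and not merely with every strictly larger one; and the degenerate case $\Vert\Psi\Vert_{R}=0$, where the scaling argument behind Lemma~\ref{lemma_transitivity_of_N} breaks down, should simply be excluded, $\Vert\Psi\Vert_{R}>0$ being the case of interest (it holds for the Lyons lift, rough integration and the It\^o--Lyons map).
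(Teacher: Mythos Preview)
Your proof is correct and follows exactly the paper's approach: the paper's entire proof reads ``Follows directly from Lemma~\ref{lemma_transitivity_of_N}'', and you have simply unpacked what this means by verifying that $\omega_{\Psi(\mathbf{x})}(u,v)\le\Vert\Psi\Vert_R^p\,\omega_{\mathbf{x}}(u,v)$ whenever $\omega_{\mathbf{x}}(u,v)\le\alpha\le R^p$, which is precisely the hypothesis of that lemma. Your additional remarks on attaining the infimum and the degenerate case $\Vert\Psi\Vert_R=0$ are more care than the paper takes, but do not change the argument.
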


\begin{proof}
Follows directly from Lemma \ref{lemma_transitivity_of_N}.
\end{proof}

\subsection{Full RDEs}

Consider the full RDE%
\begin{equation}
d\mathbf{y}=V\left( \mathbf{y}\right) \,d\mathbf{x;\quad y}_{0}\in G^{\left[
p\right] }\left( \mathbb{R}^{e}\right)  \label{eqn_full_rde}
\end{equation}%
where $\mathbf{x}$ is a weak geometric $p$-rough path with values in $G^{%
\left[ p\right] }\left( \mathbb{R}^{d}\right) $, $V=\left( V_{i}\right)
_{i=1,\ldots .d}$ is a collection of $Lip^{\gamma }$-vector fields in $%
\mathbb{R}^{e}$ where $\gamma >p$ and $\mathbf{y}_{0}$ is the initial value.
Theorem 10.36 and 10.38 in \cite{FV10} state that $\left( \ref{eqn_full_rde}%
\right) $ possesses a unique solution $\mathbf{y}$ which is a weak geometric 
$p$-rough path with values in $G^{\left[ p\right] }\left( \mathbb{R}%
^{e}\right) $.

\begin{corollary}
\label{cor_full_rdes}The It\={o}-Lyons map $\Psi \colon \mathbf{x\mapsto y}$
is locally linear with%
\begin{equation}
\left\Vert \Psi \right\Vert _{R}\leq K\left( \left\Vert V\right\Vert _{\text{%
Lip}^{\gamma -1}}\vee \left\Vert V\right\Vert _{\text{Lip}^{\gamma
-1}}^{p}R^{p-1}\right)  \label{eqn_ito_lyons_map_loc_lin}
\end{equation}%
for any $R\in (0,\infty )$ where $K$ only depends on $p$ and $\gamma $.
Moreover, if $\left\Vert V\right\Vert _{\text{Lip}^{\gamma -1}}\leq \nu $,
then for any $\alpha >0$ there is a constant $C=C\left( p,\gamma ,\nu
,\alpha \right) $ such that%
\begin{equation*}
N_{\alpha ,\left[ s,t\right] }\left( \mathbf{y}\right) \leq C\left(
N_{\alpha ,\left[ s,t\right] }\left( \mathbf{x}\right) +1\right)
\end{equation*}
for any \thinspace $s<t$.
\end{corollary}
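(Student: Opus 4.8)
The plan is to derive both assertions from the standard $p$-variation a~priori estimate for RDE solutions, combined with the deterministic comparison results already in hand, namely Proposition~\ref{prop_main_deterministic} and Lemma~\ref{lemma_Nalpha_Nbeta_estimate}.

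First I would prove local linearity. The key point is that the relevant RDE estimate is genuinely \emph{local}: by the flow decomposition of RDE solutions, the restriction of $\mathbf{y}=\Psi(\mathbf{x})$ to any subinterval $[u,v]\subset[0,T]$ solves the same RDE on $[u,v]$ driven by $\mathbf{x}|_{[u,v]}$ (with initial datum $\mathbf{y}_u$), and the a~priori bound for $\left\Vert \mathbf{y}\right\Vert_{p\text{-var};[u,v]}$ does not see that initial datum. Concretely, the quantitative estimate for full RDEs (\cite[Theorem 10.36]{FV10}) provides a constant $K_{0}=K_{0}(p,\gamma)$ with
\[
\left\Vert \mathbf{y}\right\Vert _{p\text{-var};[u,v]}\leq K_{0}\left( \left\Vert V\right\Vert _{\text{Lip}^{\gamma-1}}\left\Vert \mathbf{x}\right\Vert _{p\text{-var};[u,v]}\vee \left\Vert V\right\Vert _{\text{Lip}^{\gamma-1}}^{p}\left\Vert \mathbf{x}\right\Vert _{p\text{-var};[u,v]}^{p}\right)
\]
for every $(u,v)\in\Delta$. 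If in addition $\left\Vert \mathbf{x}\right\Vert_{p\text{-var};[u,v]}\leq R$, then $\left\Vert \mathbf{x}\right\Vert_{p\text{-var};[u,v]}^{p}\leq R^{p-1}\left\Vert \mathbf{x}\right\Vert_{p\text{-var};[u,v]}$, so the bracket is at most $\bigl(\left\Vert V\right\Vert_{\text{Lip}^{\gamma-1}}\vee \left\Vert V\right\Vert_{\text{Lip}^{\gamma-1}}^{p}R^{p-1}\bigr)\left\Vert \mathbf{x}\right\Vert_{p\text{-var};[u,v]}$. This is exactly $\left(\ref{eqn_ito_lyons_map_loc_lin}\right)$ with $K=K_{0}$, and in particular $\left\Vert \Psi\right\Vert_{R}<\infty$ for every $R\in(0,\infty)$, so $\Psi$ is locally linear in the sense of Definition~\ref{definition_loc_linear}.

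For the second assertion, assume $\left\Vert V\right\Vert_{\text{Lip}^{\gamma-1}}\leq\nu$ and set $\Lambda(R):=K\bigl(\nu\vee\nu^{p}R^{p-1}\bigr)$, so that $\left\Vert \Psi\right\Vert_{R}\leq\Lambda(R)$ by the first part. Proposition~\ref{prop_main_deterministic} gives $N_{a\left\Vert \Psi\right\Vert_{R}^{p},[s,t]}(\mathbf{y})\leq N_{a,[s,t]}(\mathbf{x})$ for all $a\in(0,R^{p}]$, and since $N_{\cdot,[s,t]}$ is non-increasing in its level and $\left\Vert \Psi\right\Vert_{R}\leq\Lambda(R)$, this upgrades to $N_{a\Lambda(R)^{p},[s,t]}(\mathbf{y})\leq N_{a,[s,t]}(\mathbf{x})$ for all $a\in(0,R^{p}]$. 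Now fix $\alpha>0$ and choose $R=R(\alpha,\nu,p,\gamma)$ so large that $\Lambda(R)\geq1$ and $\alpha\leq\bigl(R\,\Lambda(R)\bigr)^{p}$; this is possible because $R\,\Lambda(R)\geq K\nu^{p}R^{p}\to\infty$ as $R\to\infty$. Then $a:=\alpha/\Lambda(R)^{p}$ lies in $(0,R^{p}]$ and $a\Lambda(R)^{p}=\alpha$, so
\[
N_{\alpha,[s,t]}(\mathbf{y})\leq N_{\alpha/\Lambda(R)^{p},[s,t]}(\mathbf{x}).
\]
Since $\alpha/\Lambda(R)^{p}\leq\alpha$, Lemma~\ref{lemma_Nalpha_Nbeta_estimate} (with levels $\alpha/\Lambda(R)^{p}$ and $\alpha$) bounds the right-hand side by $\Lambda(R)^{p}\bigl(2N_{\alpha,[s,t]}(\mathbf{x})+1\bigr)$, whence $N_{\alpha,[s,t]}(\mathbf{y})\leq C\bigl(N_{\alpha,[s,t]}(\mathbf{x})+1\bigr)$ with $C:=2\Lambda(R)^{p}$, depending only on $p,\gamma,\nu,\alpha$.

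The only genuinely external input is the displayed RDE estimate, with a constant depending just on $p$ and $\gamma$ and not on the initial condition; everything afterwards is choosing $R$ and invoking Proposition~\ref{prop_main_deterministic} and Lemma~\ref{lemma_Nalpha_Nbeta_estimate}. I expect the main point to watch is making sure the cited a~priori bound is applied in its \emph{local} form (same constant on every subinterval, independent of the initial value), since that is precisely what allows $\Psi$ to be regarded as a locally linear map; the constant bookkeeping in the choice of $R$ is routine.
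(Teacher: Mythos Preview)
Your proof is correct and uses the same ingredients as the paper: the a~priori RDE estimate from \cite[Theorem 10.36]{FV10} for \eqref{eqn_ito_lyons_map_loc_lin}, then Proposition~\ref{prop_main_deterministic} combined with Lemma~\ref{lemma_Nalpha_Nbeta_estimate} for the $N_\alpha$ comparison. The only cosmetic difference is that the paper simply takes $R=\alpha^{1/p}$ (so that $a=\alpha$ is admissible in Proposition~\ref{prop_main_deterministic}) and then applies Lemma~\ref{lemma_Nalpha_Nbeta_estimate} on the $\mathbf{y}$ side to pass from level $\beta=\alpha\lVert\Psi\rVert_{\alpha^{1/p}}^{p}$ down to level $\alpha$, whereas you choose $R$ large and apply the lemma on the $\mathbf{x}$ side; the paper's choice is a bit more direct but both routes are equivalent.
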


\begin{proof}
$\left( \ref{eqn_ito_lyons_map_loc_lin}\right) $ follows from the estimate
(10.26) of Theorem 10.36 in \cite{FV10}. From Proposition \ref%
{prop_main_deterministic} we obtain%
\begin{equation*}
N_{\beta ,\left[ s,t\right] }\left( \Psi \left( \mathbf{x}\right) \right)
\leq N_{\alpha ,\left[ s,t\right] }\left( \mathbf{x}\right)
\end{equation*}

where $\beta =\alpha \left\Vert \Psi \right\Vert _{\alpha ^{1/p}}^{p}$. This
already shows the claim if $\left\Vert \Psi \right\Vert _{\alpha
^{1/p}}^{p}\leq 1$. In the case $\left\Vert \Psi \right\Vert _{\alpha
^{1/p}}^{p}>1$, we conclude with Lemma \ref{lemma_Nalpha_Nbeta_estimate}.
\end{proof}

\subsection{Rough integrals}

If $\mathbf{x}$ is a $p$-rough path and $\varphi =\left( \varphi _{i}\right)
_{i=1,\ldots ,d}$ a collection of $Lip^{\gamma -1}\left( \mathbb{R}^{d},%
\mathbb{R}^{e}\right) $-maps, one can define the rough integral%
\begin{equation}
\int \varphi \left( x\right) \,d\mathbf{x}  \label{eqn_rough_integral}
\end{equation}%
as an element in $C^{p-var}\left( \left[ 0,T\right] ;G^{\left[ p\right]
}\left( \mathbb{R}^{e}\right) \right) $ (c.f. \cite{FV10}, chapter 10.6).

\begin{corollary}
\label{cor_rough_integrals}The map $\Psi \colon \mathbf{x\mapsto z}$, $%
\mathbf{z}$ given by the rough integral $\left( \ref{eqn_rough_integral}%
\right) $, is locally linear with%
\begin{equation}
\left\Vert \Psi \right\Vert _{R}\leq K\left\Vert \varphi \right\Vert _{\text{%
Lip}^{\gamma -1}}\left( 1\vee R^{p-1}\right)
\label{eqn_rough_integr_loc_lin}
\end{equation}
for any $R\in (0,\infty )$ where $K$ only depends on $p$ and $\gamma $.
Moreover, if $\left\Vert \varphi \right\Vert _{\text{Lip}^{\gamma -1}}\leq
\nu $, then for any $\alpha >0$ there is a constant \thinspace $C=C\left(
p,\gamma ,\nu ,\alpha \right) $ such that%
\begin{equation*}
N_{\alpha ,\left[ s,t\right] }\left( \mathbf{z}\right) \leq C\left(
N_{\alpha ,\left[ s,t\right] }\left( \mathbf{x}\right) +1\right)
\end{equation*}%
for any $s<t$.
\end{corollary}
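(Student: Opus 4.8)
The plan is to run the same two-step argument as in the proof of Corollary \ref{cor_full_rdes}: first establish the local-linearity bound (\ref{eqn_rough_integr_loc_lin}) for the rough-integration map $\Psi\colon\mathbf{x}\mapsto\mathbf{z}$, and then feed this into Proposition \ref{prop_main_deterministic}, cleaning up with Lemma \ref{lemma_Nalpha_Nbeta_estimate}.

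For the first step, the relevant a priori estimate for rough integrals over $1$-forms from \cite{FV10}, Chapter 10.6, applied on an arbitrary subinterval $[u,v]\subset[0,T]$ (a rough integral restricted to $[u,v]$ is again a rough integral driven by $\mathbf{x}$ on $[u,v]$), takes the form
\begin{equation*}
\left\Vert\mathbf{z}\right\Vert_{p-var;[u,v]}\leq K\left\Vert\varphi\right\Vert_{\text{Lip}^{\gamma-1}}\left(\left\Vert\mathbf{x}\right\Vert_{p-var;[u,v]}\vee\left\Vert\mathbf{x}\right\Vert_{p-var;[u,v]}^{p}\right),
\end{equation*}
with $K=K(p,\gamma)$ (only the first power of $\left\Vert\varphi\right\Vert_{\text{Lip}^{\gamma-1}}$ enters because the rough integrand is linear in $\varphi$). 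If $\left\Vert\mathbf{x}\right\Vert_{p-var;[u,v]}\leq R$ then $\left\Vert\mathbf{x}\right\Vert_{p-var;[u,v]}^{p}\leq R^{p-1}\left\Vert\mathbf{x}\right\Vert_{p-var;[u,v]}$, so the right-hand side is at most $K\left\Vert\varphi\right\Vert_{\text{Lip}^{\gamma-1}}(1\vee R^{p-1})\left\Vert\mathbf{x}\right\Vert_{p-var;[u,v]}$, which is exactly (\ref{eqn_rough_integr_loc_lin}); since this holds for all $(u,v)\in\Delta$ with $\left\Vert\mathbf{x}\right\Vert_{p-var;[u,v]}\leq R$, the map $\Psi$ is locally linear with the stated norm.

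For the second step, assume $\left\Vert\varphi\right\Vert_{\text{Lip}^{\gamma-1}}\leq\nu$ and fix $\alpha>0$. Proposition \ref{prop_main_deterministic}, applied with $R=\alpha^{1/p}$ (so that $\alpha\in(0,R^{p}]$), gives
\begin{equation*}
N_{\beta,[s,t]}(\mathbf{z})\leq N_{\alpha,[s,t]}(\mathbf{x}),\qquad\beta:=\alpha\left\Vert\Psi\right\Vert_{\alpha^{1/p}}^{p}.
\end{equation*}
If $\left\Vert\Psi\right\Vert_{\alpha^{1/p}}^{p}\leq 1$ then $\beta\leq\alpha$, and since $N_{\cdot,[s,t]}$ is non-increasing in its threshold we get $N_{\alpha,[s,t]}(\mathbf{z})\leq N_{\beta,[s,t]}(\mathbf{z})\leq N_{\alpha,[s,t]}(\mathbf{x})$, so $C=1$ works. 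If $\left\Vert\Psi\right\Vert_{\alpha^{1/p}}^{p}>1$ then $\beta>\alpha$ and Lemma \ref{lemma_Nalpha_Nbeta_estimate} gives
\begin{equation*}
N_{\alpha,[s,t]}(\mathbf{z})\leq\frac{\beta}{\alpha}\bigl(2N_{\beta,[s,t]}(\mathbf{z})+1\bigr)\leq\frac{\beta}{\alpha}\bigl(2N_{\alpha,[s,t]}(\mathbf{x})+1\bigr),
\end{equation*}
where $\beta/\alpha=\left\Vert\Psi\right\Vert_{\alpha^{1/p}}^{p}$ is, by (\ref{eqn_rough_integr_loc_lin}) with $R=\alpha^{1/p}$ and $\left\Vert\varphi\right\Vert_{\text{Lip}^{\gamma-1}}\leq\nu$, bounded by a constant depending only on $p,\gamma,\nu$ and $\alpha$. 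Taking $C$ to be twice that constant (and at least $1$) yields $N_{\alpha,[s,t]}(\mathbf{z})\leq C(N_{\alpha,[s,t]}(\mathbf{x})+1)$, as claimed.

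The bookkeeping here is identical to that of Corollary \ref{cor_full_rdes}, so the argument is essentially routine. The one place calling for care is the first step: one must locate in \cite{FV10} a rough-integral estimate of precisely the homogeneous shape $\left\Vert\mathbf{x}\right\Vert_{p-var}\vee\left\Vert\mathbf{x}\right\Vert_{p-var}^{p}$ and confirm that in the small-$\left\Vert\mathbf{x}\right\Vert_{p-var}$ regime the integral is controlled \emph{linearly} in $\left\Vert\mathbf{x}\right\Vert_{p-var}$, with a constant that is only the first power of $\left\Vert\varphi\right\Vert_{\text{Lip}^{\gamma-1}}$; once that is in hand, Proposition \ref{prop_main_deterministic} and Lemma \ref{lemma_Nalpha_Nbeta_estimate} do the rest mechanically.
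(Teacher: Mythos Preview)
Your proposal is correct and follows essentially the same approach as the paper: the paper's proof simply cites \cite{FV10}, Theorem 10.47 for (\ref{eqn_rough_integr_loc_lin}) and then says ``One proceeds as in the proof of Corollary \ref{cor_full_rdes},'' which is exactly the two-step argument (Proposition \ref{prop_main_deterministic} with $R=\alpha^{1/p}$, then Lemma \ref{lemma_Nalpha_Nbeta_estimate} when $\|\Psi\|_{\alpha^{1/p}}^{p}>1$) that you have spelled out in detail. The only cosmetic difference is that the paper pinpoints Theorem 10.47 rather than the chapter, and leaves the case split implicit by reference to Corollary \ref{cor_full_rdes}.
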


\begin{proof}
$\left( \ref{eqn_rough_integr_loc_lin}\right) $ follows from \cite{FV10},
Theorem 10.47. One proceeds as in the proof of Corollary \ref{cor_full_rdes}.
\end{proof}

\section{Linear RDEs}

For a $p$-rough path $\mathbf{x}$, consider the full linear RDE%
\begin{equation}
d\mathbf{y}=V\left( \mathbf{y}\right) \,d\mathbf{x;\quad y}_{0}\in G^{\left[
p\right] }\left( \mathbb{R}^{e}\right)  \label{eqn_linear_rde}
\end{equation}%
where $V=\left( V_{i}\right) _{i=1,\ldots ,d}$ is a collection of linear
vector fields of the form $V_{i}\left( z\right) =A_{i}z+b_{i}$, $A_{i}$ are $%
e\times e$ matrices and $b_{i}\in \mathbb{R}^{e}$. It is well-known (e.g. 
\cite{FV10}, section 10.7) that in this case $\left( \ref{eqn_linear_rde}%
\right) $ has a unique solution $\mathbf{y}$. Unfortunately, the map $\Psi
\colon \mathbf{x\mapsto y}$ is not locally linear in the sense of Definition %
\ref{definition_loc_linear} and our tools of the former section do not
apply. However, we can do a more direct analysis and obtain a different
transitivity of the tail estimates.

Let $\nu $ be a bound on $\max_{i}\left( |A_{i}|+|b_{i}|\right) $ and set $%
y=\pi _{1}\left( \mathbf{y}\right) $. In \cite{FV10}, Theorem 10.53 one sees
that there is a constant $C$ depending only on $p$ such that%
\begin{equation}
\left\Vert \mathbf{y}_{s,t}\right\Vert \leq C\left( 1+\left\vert
y_{s}\right\vert \right) \nu \left\Vert \mathbf{x}\right\Vert _{p-var;\left[
s,t\right] }\exp \left( C\nu ^{p}\left\Vert \mathbf{x}\right\Vert _{p-var;%
\left[ s,t\right] }^{p}\right)  \label{eqn_key_estimate_linear_rdes}
\end{equation}%
holds for all $s<t\in \lbrack 0,T]$. (Strictly speaking, we only find the
estimate for $\left( s,t\right) =\left( 0,1\right) $, the general case
follows by reparametrization.) We start with an estimate for the supremum
norm of $y$.

\begin{lemma}
\label{lemma_sup_norm_linear_rde}\bigskip For any $\alpha >0$ there is a
constant $C=C\left( p,\nu ,\alpha \right) $ such that%
\begin{equation*}
\left\vert y\right\vert _{\infty ;\left[ s,t\right] }\leq C\left(
1+\left\vert y_{s}\right\vert \right) \exp \left( CN_{\alpha ;\left[ s,t%
\right] }\left( \mathbf{x}\right) \right)
\end{equation*}%
holds for any \thinspace $s<t$.
\end{lemma}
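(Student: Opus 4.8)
The plan is to run the a priori bound $\left( \ref{eqn_key_estimate_linear_rdes}\right) $ on each interval of the greedy partition underlying $N_{\alpha ,\left[ s,t\right] }\left( \mathbf{x}\right) $, where it is harmless, and then to chain the resulting local estimates.

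Concretely, write $\omega _{\mathbf{x}}\left( u,v\right) =\left\Vert \mathbf{x}\right\Vert _{p-var;\left[ u,v\right] }^{p}$, let $\left( \tau _{i}\right) _{i\geq 0}=\left( \tau _{i}\left( \alpha \right) \right) _{i\geq 0}$ be the associated greedy sequence on $\left[ s,t\right] $, and put $N=N_{\alpha ,\left[ s,t\right] }\left( \mathbf{x}\right) $, so that $\tau _{0}=s$, $\tau _{N+1}=t$, and $\omega _{\mathbf{x}}\left( \tau _{i},u\right) \leq \alpha $ for every $u\in \left[ \tau _{i},\tau _{i+1}\right] $ and every $i\leq N$; equivalently $\left\Vert \mathbf{x}\right\Vert _{p-var;\left[ \tau _{i},u\right] }\leq \alpha ^{1/p}$ there. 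Applying $\left( \ref{eqn_key_estimate_linear_rdes}\right) $ on $\left[ \tau _{i},u\right] $ and absorbing the factors $\nu \alpha ^{1/p}$ and $\exp \left( C\nu ^{p}\alpha \right) $ into a single constant $C_{1}=C_{1}\left( p,\nu ,\alpha \right) $, we obtain $\left\Vert \mathbf{y}_{\tau _{i},u}\right\Vert \leq C_{1}\left( 1+\left\vert y_{\tau _{i}}\right\vert \right) $, and hence, since the first level of $\mathbf{y}$ is dominated by the homogeneous norm, $1+\left\vert y_{u}\right\vert \leq \left( 1+C_{1}\right) \left( 1+\left\vert y_{\tau _{i}}\right\vert \right) $ for all $u\in \left[ \tau _{i},\tau _{i+1}\right] $, $i\leq N$.

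The conclusion then follows by an elementary induction: taking $u=\tau _{i+1}$ gives $1+\left\vert y_{\tau _{i+1}}\right\vert \leq \left( 1+C_{1}\right) \left( 1+\left\vert y_{\tau _{i}}\right\vert \right) $, so $1+\left\vert y_{\tau _{i}}\right\vert \leq \left( 1+C_{1}\right) ^{i}\left( 1+\left\vert y_{s}\right\vert \right) $. For arbitrary $u\in \left[ s,t\right] $ pick $i\leq N$ with $u\in \left[ \tau _{i},\tau _{i+1}\right] $; then $1+\left\vert y_{u}\right\vert \leq \left( 1+C_{1}\right) ^{i+1}\left( 1+\left\vert y_{s}\right\vert \right) \leq \left( 1+C_{1}\right) ^{N+1}\left( 1+\left\vert y_{s}\right\vert \right) $. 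Taking the supremum over $u$ and writing $\left( 1+C_{1}\right) ^{N+1}=\left( 1+C_{1}\right) \exp \left( N\log \left( 1+C_{1}\right) \right) $ yields the claim with, say, $C=\max \left\{ 1+C_{1},\log \left( 1+C_{1}\right) \right\} $, which depends only on $p,\nu ,\alpha $.

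I do not expect a genuine obstacle. The only points needing a word of care are that $\omega _{\mathbf{x}}$ is a bona fide control (so that $N<\infty $ and $\tau _{N+1}=t$, as recorded in the text) and that $\left( \ref{eqn_key_estimate_linear_rdes}\right) $, stated for $\left( s,t\right) =\left( 0,1\right) $, is available on every subinterval via the reparametrisation remark following it; the rest is bookkeeping of constants, in particular checking that $C_{1}$ correctly swallows $\nu \alpha ^{1/p}\exp \left( C\nu ^{p}\alpha \right) $.
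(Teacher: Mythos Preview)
Your proof is correct and follows essentially the same approach as the paper: iterate the a priori estimate \eqref{eqn_key_estimate_linear_rdes} over the greedy partition $(\tau_i)$ associated with $N_{\alpha,[s,t]}(\mathbf{x})$ and chain the resulting local bounds by induction. The only difference is cosmetic: you track $1+|y_u|$ directly and obtain the clean multiplicative recursion $1+|y_{\tau_{i+1}}|\leq(1+C_1)(1+|y_{\tau_i}|)$, whereas the paper tracks the increment $|y_{s,u}|$ and carries an extra additive term through the induction, leading to slightly messier constants; your bookkeeping is arguably tidier, but the substance is identical.
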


\begin{proof}
From $\left( \ref{eqn_key_estimate_linear_rdes}\right) $ we have%
\begin{equation}
\left\vert y_{u,v}\right\vert \leq C\left( 1+\left\vert y_{u}\right\vert
\right) \nu \left\Vert \mathbf{x}\right\Vert _{p-var;\left[ u,v\right] }\exp
\left( C\nu ^{p}\left\Vert \mathbf{x}\right\Vert _{p-var;\left[ u,v\right]
}^{p}\right)  \label{eqn_reparam_linear_rde}
\end{equation}%
for any $u<v\in \left[ s,t\right] $. From $\left\vert y_{u,v}\right\vert
=\left\vert y_{s,v}-y_{s,u}\right\vert \geq \left\vert y_{s,v}\right\vert
-\left\vert y_{s,u}\right\vert $ we obtain%
\begin{eqnarray*}
\left\vert y_{s,v}\right\vert &\leq &C\left( 1+\left\vert y_{u}\right\vert
\right) \nu \left\Vert \mathbf{x}\right\Vert _{p-var;\left[ u,v\right] }\exp
\left( C\nu ^{p}\left\Vert \mathbf{x}\right\Vert _{p-var;\left[ u,v\right]
}^{p}\right) +\left\vert y_{s,u}\right\vert \\
&\leq &C\left( 1+\left\vert y_{s}\right\vert +\left\vert y_{s,u}\right\vert
\right) \exp \left\{ C\nu ^{p}\left\Vert \mathbf{x}\right\Vert _{p-var;\left[
u,v\right] }^{p}\right\}
\end{eqnarray*}%
by making $C$ larger. Now let $s=\tau _{0}<\ldots <\tau _{N}<\tau
_{M+1}=u\leq t$ with $M\geq 0$. By induction, one sees that%
\begin{eqnarray*}
\left\vert y_{s,u}\right\vert &\leq &C^{M+1}\left( \left( M+1\right) \left(
1+\left\vert y_{s}\right\vert \right) \right) \exp \left\{
C\sum_{i=0}^{M}\nu ^{p}\left\Vert \mathbf{x}\right\Vert _{p-var;\left[ \tau
_{i},\tau _{i+1}\right] }^{p}\right\} \\
&\leq &C^{M+1}\left( 1+\left\vert y_{s}\right\vert \right) \exp \left\{
C\sum_{i=0}^{M}\nu ^{p}\left\Vert \mathbf{x}\right\Vert _{p-var;\left[ \tau
_{i},\tau _{i+1}\right] }^{p}\right\} .
\end{eqnarray*}%
This shows that for every $u\in \left[ s,t\right] $,%
\begin{eqnarray*}
\left\vert y_{s,u}\right\vert &\leq &C^{\left( N_{\alpha ;\left[ s,t\right]
}\left( \mathbf{x}\right) +1\right) }\left( 1+\left\vert y_{s}\right\vert
\right) \exp \left( C\nu ^{p}\alpha \left( N_{\alpha ;\left[ s,t\right]
}\left( \mathbf{x}\right) +1\right) \right) \\
&=&\left( 1+\left\vert y_{s}\right\vert \right) \exp \left\{ \left( \log
\left( C\right) +C\nu ^{p}\alpha \right) \left( N_{\alpha ;\left[ s,t\right]
}\left( \mathbf{x}\right) +1\right) \right\}
\end{eqnarray*}%
and hence%
\begin{equation*}
\sup_{u\in \left[ s,t\right] }\left\vert y_{s,u}\right\vert \leq C\left(
1+\left\vert y_{s}\right\vert \right) \exp \left( CN_{\alpha ;\left[ s,t%
\right] }\left( \mathbf{x}\right) \right)
\end{equation*}%
for a constant $C=C\left( p,\nu ,\alpha \right) $ and therefore also%
\begin{equation*}
\left\vert y\right\vert _{\infty ;\left[ s,t\right] }\leq C\left(
1+\left\vert y_{s}\right\vert \right) \exp \left( CN_{\alpha ;\left[ s,t%
\right] }\left( \mathbf{x}\right) \right) .
\end{equation*}%
\bigskip
\end{proof}

\begin{corollary}
\label{cor_linear_RDEs}Let $\alpha >0$. Then there is a constant $C=C\left(
p,\nu ,\alpha \right) $ such that%
\begin{equation*}
N_{\alpha ,\left[ s,t\right] }\left( \mathbf{y}\right) \leq C\left(
1+\left\vert y_{s}\right\vert \right) ^{p}\exp \left( CN_{\alpha ;\left[ s,t%
\right] }\left( \mathbf{x}\right) \right)
\end{equation*}%
for any \thinspace $s<t$.
\end{corollary}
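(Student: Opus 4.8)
The plan is to combine the a~priori estimate $(\ref{eqn_key_estimate_linear_rdes})$ with the transitivity lemmas for $N$, using the supremum bound of Lemma \ref{lemma_sup_norm_linear_rde} to replace the varying reference points $y_u$ by a single uniform constant.

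First I would record a local comparison of the two controls $\omega_{\mathbf{y}}(u,v)=\left\Vert \mathbf{y}\right\Vert _{p\text{-var};[u,v]}^{p}$ and $\omega_{\mathbf{x}}(u,v)=\left\Vert \mathbf{x}\right\Vert _{p\text{-var};[u,v]}^{p}$. Fix $\alpha>0$ and set $R:=1+\left\vert y\right\vert _{\infty;[s,t]}$, which is finite by Lemma \ref{lemma_sup_norm_linear_rde}. For any $[u,v]\subseteq[s,t]$ with $\omega_{\mathbf{x}}(u,v)\le\alpha$ the exponential factor in $(\ref{eqn_key_estimate_linear_rdes})$ is bounded by $e^{C\nu^{p}\alpha}$, whence $\left\Vert \mathbf{y}_{u,v}\right\Vert \le C_{1}(1+\left\vert y_{u}\right\vert )\,\omega_{\mathbf{x}}(u,v)^{1/p}\le C_{1}R\,\omega_{\mathbf{x}}(u,v)^{1/p}$ with $C_{1}=C_{1}(p,\nu,\alpha)$. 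Every sub-increment of such a $[u,v]$ again satisfies the smallness condition, so summing over an arbitrary dissection and using super-additivity of $\omega_{\mathbf{x}}$ gives
\[
\omega_{\mathbf{y}}(u,v)\le C_{1}^{p}R^{p}\,\omega_{\mathbf{x}}(u,v)\qquad\text{whenever }\omega_{\mathbf{x}}(u,v)\le\alpha .
\]

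Next I would feed this into Lemma \ref{lemma_transitivity_of_N} (with $\omega_{1}=\omega_{\mathbf{y}}$, $\omega_{2}=\omega_{\mathbf{x}}$, constant $C_{1}^{p}R^{p}$, threshold $\alpha$) to obtain $N_{C_{1}^{p}R^{p}\alpha,[s,t]}(\mathbf{y})\le N_{\alpha,[s,t]}(\mathbf{x})$. Choosing all constants $\ge1$ so that $\beta:=C_{1}^{p}R^{p}\alpha\ge\alpha$, Lemma \ref{lemma_Nalpha_Nbeta_estimate} upgrades this to
\[
N_{\alpha,[s,t]}(\mathbf{y})\le C_{1}^{p}R^{p}\bigl(2N_{\alpha,[s,t]}(\mathbf{x})+1\bigr).
\]
Finally I substitute the bound $R\le C(1+\left\vert y_{s}\right\vert )\exp\bigl(CN_{\alpha;[s,t]}(\mathbf{x})\bigr)$ from Lemma \ref{lemma_sup_norm_linear_rde} and absorb the polynomial factor $2N_{\alpha,[s,t]}(\mathbf{x})+1$ into a slightly larger exponential, which yields the claimed estimate with a constant depending only on $p$, $\nu$ and $\alpha$.

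The proof is short once this reformulation is in place; the one genuinely delicate point is that the constant in $(\ref{eqn_key_estimate_linear_rdes})$ depends on the left endpoint through $\left\vert y_{u}\right\vert $, so it is \emph{not} of the fixed-ratio form $\omega_{\mathbf{y}}\le C\,\omega_{\mathbf{x}}$ required by Lemma \ref{lemma_transitivity_of_N}. This is precisely why Lemma \ref{lemma_sup_norm_linear_rde} must be proved first: replacing $\left\vert y_{u}\right\vert $ by the uniform bound $\left\vert y\right\vert _{\infty;[s,t]}$ is what makes the comparison of controls legitimate, at the acceptable price of an exponential-in-$N_{\alpha}(\mathbf{x})$ prefactor. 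One should also keep a little care with strict versus non-strict inequalities in passing through the greedy partitions, but Lemmas \ref{lemma_transitivity_of_N} and \ref{lemma_Nalpha_Nbeta_estimate} already package that issue.
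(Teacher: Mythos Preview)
Your proposal is correct and follows essentially the same route as the paper: replace $|y_u|$ by $|y|_{\infty;[s,t]}$ via Lemma~\ref{lemma_sup_norm_linear_rde} to obtain a fixed-ratio local comparison $\omega_{\mathbf{y}}\le \tilde C\,\omega_{\mathbf{x}}$ on intervals where $\omega_{\mathbf{x}}\le\alpha$, apply Lemma~\ref{lemma_transitivity_of_N}, then Lemma~\ref{lemma_Nalpha_Nbeta_estimate}, and finally substitute the exponential bound for $|y|_{\infty;[s,t]}$. The only cosmetic differences are that the paper first passes from the increment bound to the $p$-variation bound for \emph{all} $u<v$ (keeping the exponential factor) before specialising to small intervals, and that it splits into the cases $\tilde C\le 1$ and $\tilde C>1$ rather than simply enlarging constants to be $\ge 1$ as you do.
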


\begin{proof}
Using $\left( \ref{eqn_key_estimate_linear_rdes}\right) $ we can deduce that%
\begin{equation*}
\left\Vert \mathbf{y}_{u,v}\right\Vert \leq C\left( 1+\left\vert
y\right\vert _{\infty ;\left[ s,t\right] }\right) \nu \left\Vert \mathbf{x}%
\right\Vert _{p-var;\left[ u,v\right] }\exp \left( C\nu ^{p}\left\Vert 
\mathbf{x}\right\Vert _{p-var;\left[ u,v\right] }^{p}\right)
\end{equation*}%
holds for any $u<v\in \left[ s,t\right] $ and hence also%
\begin{equation*}
\left\Vert \mathbf{y}\right\Vert _{p-var;\left[ u,v\right] }\leq C\left(
1+\left\vert y\right\vert _{\infty ;\left[ s,t\right] }\right) \nu
\left\Vert \mathbf{x}\right\Vert _{p-var;\left[ u,v\right] }\exp \left( C\nu
^{p}\left\Vert \mathbf{x}\right\Vert _{p-var;\left[ u,v\right] }^{p}\right)
\end{equation*}%
for any $u<v\in \left[ s,t\right] $. Now take $u<v\in \left[ s,t\right] $
such that $\left\Vert \mathbf{x}\right\Vert _{p-var;\left[ u,v\right]
}^{p}\leq \alpha $. We then have%
\begin{equation*}
\left\Vert \mathbf{y}\right\Vert _{p-var;\left[ u,v\right] }^{p}\leq \tilde{C%
}\left\Vert \mathbf{x}\right\Vert _{p-var;\left[ u,v\right] }^{p}
\end{equation*}%
where%
\begin{equation*}
\tilde{C}=C^{p}\left( 1+\left\vert y\right\vert _{\infty ;\left[ s,t\right]
}\right) ^{p}\nu ^{p}\exp \left( pC\nu ^{p}\alpha \right) \text{.}
\end{equation*}%
From Lemma \ref{lemma_transitivity_of_N},%
\begin{equation*}
N_{\tilde{C}\alpha ,\left[ s,t\right] }\left( \mathbf{y}\right) \leq
N_{\alpha ,\left[ s,t\right] }\left( \mathbf{x}\right) .
\end{equation*}

If $\tilde{C}\leq 1$, this already shows the claim. For $\tilde{C}>1$, we
use Lemma \ref{lemma_Nalpha_Nbeta_estimate} and Lemma \ref%
{lemma_sup_norm_linear_rde} to see that%
\begin{eqnarray*}
N_{\alpha ,\left[ s,t\right] }\left( \mathbf{y}\right) &\leq &\left( 2N_{%
\tilde{C}\alpha ,\left[ s,t\right] }\left( \mathbf{y}\right) +1\right) 
\tilde{C} \\
&\leq &C\left( N_{\alpha ,\left[ s,t\right] }\left( \mathbf{x}\right)
+1\right) \left( 1+\left\vert y\right\vert _{\infty ;\left[ s,t\right]
}\right) ^{p} \\
&\leq &C\left( 1+\left\vert y_{s}\right\vert \right) ^{p}\exp \left(
CN_{\alpha ,\left[ s,t\right] }\left( \mathbf{x}\right) \right) .
\end{eqnarray*}
\end{proof}

\begin{remark}[Unbounded vector fields]
\label{remark_unbounded_vf}Let $\mathbf{x}$ be a $p$-rough path. Consider a
collection $V=\left( V_{i}\right) _{1\leq i\leq d}$ of locally $Lip^{\gamma
-1}$-vector fields on $\mathbb{R}^{e}$, $\gamma \in \left( p,[p]+1\right) $,
such that $V_{i}$ are Lipschitz continuous and the vector fields $%
V^{[p]}=\left( V_{i_{1}},\ldots ,V_{i_{[p]}}\right) _{i_{1},\ldots
,i_{[p]}\in \{1,\ldots ,d\}}$ are $(\gamma -[p])$-H\"{o}lder continuous.
Then the RDE%
\begin{equation*}
d\mathbf{y}=V\left( \mathbf{y}\right) \,d\mathbf{x;\quad y}_{0}\in G^{\left[
p\right] }\left( \mathbb{R}^{e}\right)
\end{equation*}%
has a unique solution (c.f. \cite{FV10}, Exercise 10.56 and the solution
thereafter and \cite{Lej09}). Moreover, in \cite{FV10} it is shown that%
\begin{equation*}
\left\Vert \mathbf{y}_{0,1}\right\Vert \leq C\left( 1+\left\vert
y_{0}\right\vert \right) \nu \left\Vert \mathbf{x}\right\Vert _{p-var;\left[
0,1\right] }\exp \left( C\nu ^{p}\left\Vert \mathbf{x}\right\Vert _{p-var;%
\left[ 0,1\right] }^{p}\right)
\end{equation*}%
where $C=C\left( p,\gamma \right) $ and $\nu $ is a bound on $\left\vert
V^{[p]}\right\vert _{\left( \gamma -[p]\right) \text{-H\"{o}l}}^{1/[p]}\vee
\sup_{y,z}\frac{\left\vert V\left( y\right) -V\left( z\right) \right\vert }{%
\left\vert y-z\right\vert }$. This shows that Lemma \ref%
{lemma_sup_norm_linear_rde} and Corollary \ref{cor_linear_RDEs} apply for $%
\mathbf{y}$, hence for any $\alpha >0$ there is a constant $C=C\left(
p,\gamma ,\nu ,\alpha \right) $ such that%
\begin{equation*}
N_{\alpha ,\left[ s,t\right] }\left( \mathbf{y}\right) \leq C\left(
1+\left\vert y_{s}\right\vert \right) ^{p}\exp \left( CN_{\alpha ;\left[ s,t%
\right] }\left( \mathbf{x}\right) \right)
\end{equation*}%
for all $s<t$ in this case.
\end{remark}

\section{Applications in stochastic analysis}

\subsection{Tail estimates for stochastic integrals and solutions of SDEs
driven by Gaussian signals}

We now apply our results to solutions of SDEs and stochastic integrals
driven by Gaussian signals, i.e. a Gaussian rough path $\mathbf{X}$. Remark
that all results here may be immediately formulated for SDEs and stochastic
integrals driven by random rough paths as along as suitable quantitative
Weibull-tail estimate for $N_{\alpha ,\left[ 0,T\right] }\left( \mathbf{X}%
\right) $ are assumed.

We first consider the non-linear case:

\begin{proposition}
\label{prop_tail_estimates_sdes_gaussian}Let $X$ be a centred Gaussian
process in $\mathbb{R}^{d}$ with independent components and covariance $%
R_{X} $ of finite $\rho $-variation, $\rho <2$. Consider the Gaussian $p$%
-rough paths $\mathbf{X}$ for $p>2\rho $ and assume that there is a
continuous embedding%
\begin{equation*}
\iota \colon H\hookrightarrow C^{q-var}
\end{equation*}%
where $\frac{1}{p}+\frac{1}{q}>1$. Let $Y\colon \lbrack 0,T]\rightarrow 
\mathbb{R}^{e}$ be the pathwise solution of the stochastic RDE%
\begin{equation*}
dY=V\left( Y\right) \,d\mathbf{X;\quad }Y_{0}\in \mathbb{R}^{e}
\end{equation*}%
where $V=\left( V_{i}\right) _{i=1,\ldots .d}$ is a collection of $%
Lip^{\gamma }$-vector fields in $\mathbb{R}^{e}$ with $\gamma >p$. Moreover,
let $Z\colon \lbrack 0,T]\rightarrow \mathbb{R}^{e}$ be the stochastic
integral given by%
\begin{equation*}
Z_{t}=\pi _{1}\left( \int_{0}^{t}\varphi \left( X\right) \,d\mathbf{X}\right)
\end{equation*}%
where $\varphi =\left( \varphi _{i}\right) _{i=1,\ldots ,d}$ is a collection
of $Lip^{\gamma -1}\left( \mathbb{R}^{d},\mathbb{R}^{e}\right) $-maps, $%
\gamma >p$. Then both $\left\Vert Y\right\Vert _{p-var;[0,T]}$ and $%
\left\Vert Z\right\Vert _{p-var;[0,T]}$ have Weibull tails with shape
parameter $2/q$. More precisely, if $K\geq \left\Vert \iota \right\Vert
_{op} $, $M\geq V_{\rho -\text{var}}\left( R;\left[ 0,T\right] ^{2}\right) $
and $\nu \geq \left\Vert V\right\Vert _{\text{Lip}^{\gamma -1}}$ there is a
constant $\eta =\eta \left( p,q,\rho ,\gamma ,\nu ,K,M\right) >0$ such that%
\begin{equation*}
P\left( \left\Vert Y\right\Vert _{p-var;[0,T]}>r\right) \leq \frac{1}{\eta }%
\exp \left( -\eta r^{2/q}\right) \quad \text{for all }r\geq 0
\end{equation*}%
and the same holds for $\left\Vert Z\right\Vert _{p-var;[0,T]}$ if $\nu \geq
\left\Vert \varphi \right\Vert _{\text{Lip}^{\gamma -1}}$ instead. In
particular, $\left\Vert Y\right\Vert _{p-var;[0,T]}$ and $\left\Vert
Z\right\Vert _{p-var;[0,T]}$ have finite exponential moments as long as $q<2$%
.
\end{proposition}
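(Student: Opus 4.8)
The plan is to push the Weibull-tail estimate for $N_{\alpha,[0,T]}\left(\mathbf{X}\right)$ of Corollary~\ref{cor_cll_results_single_gaussian} (in the version for $N_\alpha$ rather than $\bar N_\alpha$, granted by the remark following it) through the deterministic transitivity results of Section~4, and then to convert a bound on $N_{\alpha,[0,T]}$ of the output path into a bound on its $p$-variation norm via Lemma~\ref{lemma_pvar_dom_by_N}. It is convenient to fix $\alpha=1$ throughout.

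First I would treat $Y=\pi_1\left(\mathbf{Y}\right)$, where $\mathbf{Y}$ is the full RDE solution. Since $\left\Vert V\right\Vert_{\mathrm{Lip}^{\gamma-1}}\le\nu$ and $\gamma>p$, Corollary~\ref{cor_full_rdes} supplies a constant $C_1=C_1\left(p,\gamma,\nu\right)$ with $N_{1,[0,T]}\left(\mathbf{Y}\right)\le C_1\left(N_{1,[0,T]}\left(\mathbf{X}\right)+1\right)$. Lemma~\ref{lemma_pvar_dom_by_N} (with $\alpha=1$) gives $\left\Vert\mathbf{Y}\right\Vert_{p\text{-var};[0,T]}\le N_{1,[0,T]}\left(\mathbf{Y}\right)+1$, and since the level-one $p$-variation is dominated by the homogeneous rough-path $p$-variation, $\left\Vert Y\right\Vert_{p\text{-var};[0,T]}\le\left\Vert\mathbf{Y}\right\Vert_{p\text{-var};[0,T]}$. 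Chaining these gives $\left\Vert Y\right\Vert_{p\text{-var};[0,T]}\le C_2\left(N_{1,[0,T]}\left(\mathbf{X}\right)+1\right)$ for some $C_2=C_2\left(p,\gamma,\nu\right)$. The rough integral $Z=\pi_1\left(\int_0^\cdot\varphi\left(X\right)\,d\mathbf{X}\right)$ is handled identically, replacing Corollary~\ref{cor_full_rdes} by Corollary~\ref{cor_rough_integrals} and using $\left\Vert\varphi\right\Vert_{\mathrm{Lip}^{\gamma-1}}\le\nu$.

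Next I would invoke Corollary~\ref{cor_cll_results_single_gaussian} with $\alpha=1$ and $K\ge\left\Vert\iota\right\Vert_{op}$: there are $C_3=C_3\left(p,q\right)$ and a threshold $\hat a$, with $\hat a\le\Phi^{-1}\mu\{\,|||\mathbf{X}|||_{p\text{-var};[0,T]}^{p}\le (2^{p}[p])^{-1}\}$, such that $\mu\{N_{1,[0,T]}\left(\mathbf{X}\right)>u\}\le\exp\{-\tfrac12(\hat a+u^{1/q}/(C_3K))^2\}$ for all $u>0$. By Lemma~\ref{lemma_Aa_pos_meas} together with the norm equivalence $\left(\ref{eqn_equiv_hom_rp_norms}\right)$, the probability $\mu\{\,|||\mathbf{X}|||_{p\text{-var};[0,T]}^{p}\le (2^{p}[p])^{-1}\}$ is bounded below by a positive constant depending only on $\rho,p,M$, so $\hat a$ may be taken finite and depending only on $\rho,p,M$, independently of $r$. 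Substituting $u=r/C_2-1$ into the tail bound, and bounding the probability trivially by $1$ when $r\le C_2$, gives
\begin{equation*}
P\left(\left\Vert Y\right\Vert_{p\text{-var};[0,T]}>r\right)\le\exp\Bigl\{-\tfrac12\Bigl(\hat a+\tfrac{\left(r/C_2-1\right)_{+}^{1/q}}{C_3K}\Bigr)^2\Bigr\}.
\end{equation*}
An elementary estimate (expand the square, use $\left(a+b\right)^2\ge\tfrac12 b^2-a^2$, note $\left(r/C_2-1\right)_{+}^{1/q}\ge c\,r^{1/q}$ for $r$ large and that the bound is $\le1$ for $r$ bounded, then absorb the resulting constants) converts this into the asserted form $\tfrac1\eta\exp\left(-\eta r^{2/q}\right)$ with $\eta=\eta\left(p,q,\rho,\gamma,\nu,K,M\right)>0$. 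The same computation with $C_2$ replaced by the constant coming from Corollary~\ref{cor_rough_integrals} yields the bound for $\left\Vert Z\right\Vert_{p\text{-var};[0,T]}$. Since $q<2$ forces $2/q>1$, the stated finiteness of exponential moments is immediate from the Weibull tail.

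Since all the deterministic ingredients are already in place, there is no genuine obstacle here; the only points that demand care are the bookkeeping of constants — in particular, checking that $\hat a$ can be fixed independently of $r$ once $\alpha$ is fixed, and that it depends only on $\rho,p,M$ — and the elementary passage from a Gaussian-type tail in the variable $r^{1/q}$ to the clean Weibull form governed by a single constant $\eta$.
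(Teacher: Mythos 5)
Your overall strategy is the paper's own: push the Weibull tail for $N_{\alpha,[0,T]}(\mathbf{X})$ from Corollary \ref{cor_cll_results_single_gaussian} through Corollaries \ref{cor_full_rdes} and \ref{cor_rough_integrals}, then pass to $\left\Vert Y\right\Vert_{p\text{-var}}$ via Lemma \ref{lemma_pvar_dom_by_N}. The deterministic chain $\left\Vert Y\right\Vert_{p\text{-var};[0,T]}\leq C_{2}\left(N_{\alpha,[0,T]}(\mathbf{X})+1\right)$ and the elementary conversion of the Gaussian-type bound in $r^{1/q}$ into the form $\frac{1}{\eta}\exp\left(-\eta r^{2/q}\right)$ are fine.

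There is, however, one genuine gap, precisely at the point you flag and then dismiss: the claim that, with $\alpha=1$ fixed, $\hat{a}\leq\Phi^{-1}\mu\left\{|||\mathbf{X}|||_{p\text{-var};[0,T]}^{p}\leq(2^{p}[p])^{-1}\right\}$ can be chosen finite \emph{depending only on} $\rho,p,M$. Lemma \ref{lemma_Aa_pos_meas} does not give this. Its quantitative part reads $\mu\left\{|||\mathbf{X}|||_{p\text{-var}}<a\right\}\geq 1-C e^{-a}$ with $C=C(\rho,p,M)$, which is vacuous for the fixed small threshold $a=(2^{p}[p])^{-1/p}<1$ unless $C$ happens to be close to $1$; its qualitative part (positivity via the support theorem) yields a constant depending on the particular law of $X$, not merely on $M$. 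A uniform small-ball lower bound at a fixed small radius over all covariances with $V_{\rho\text{-var}}(R)\leq M$ is a nontrivial assertion that is nowhere established in the paper. Without it your $\eta$ depends on $X$ itself, which destroys the uniformity that the ``More precisely'' clause asserts and that the later SPDE application (uniformity in $\varepsilon$) relies on. The fix is exactly the paper's device: do not fix $\alpha=1$, but use Lemma \ref{lemma_Aa_pos_meas} to choose $\alpha=\alpha(\rho,p,M)$ large enough that $\mu\left\{|||\mathbf{X}|||_{p\text{-var};[0,T]}^{p}\leq\frac{\alpha}{2^{p}[p]}\right\}\geq\frac{1}{2}$, so that one may take $\hat{a}=\Phi^{-1}\left(\frac{1}{2}\right)=0$; the constants in Corollaries \ref{cor_full_rdes} and \ref{cor_rough_integrals} then additionally depend on $\alpha$, hence on $\rho,p,M$, which is harmless. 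With that modification the rest of your argument is correct.
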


\begin{proof}
From \ref{lemma_Aa_pos_meas} we know that there is a $\alpha =\alpha \left(
\rho ,p,M\right) $ such that%
\begin{equation*}
P\left\{ |||\mathbf{X}|||_{p-var;\left[ 0,T\right] }^{p}\leq \frac{\alpha }{%
2^{p}\left[ p\right] }\right\} \geq \frac{1}{2}.
\end{equation*}%
Hence, by Corollary \ref{cor_cll_results_single_gaussian}, applied with $%
\hat{a}=\Phi ^{-1}\left( \frac{1}{2}\right) =0$, and the remark thereafter,%
\begin{equation*}
P\left\{ N_{\alpha ,\left[ 0,T\right] }\left( \mathbf{X}\right) >r\right\}
\leq \exp \left\{ -\frac{1}{2}\left( \frac{\alpha ^{1/p}r^{1/q}}{c_{1}}%
\right) ^{2}\right\} \quad \text{for all }r\geq 0
\end{equation*}%
with $c_{1}=c_{1}\left( p,q,K,M\right) $. Corollary \ref{cor_full_rdes}
shows that there is a constant $c_{2}=c_{2}\left( p,q,K,M,\gamma ,\nu
\right) $ such that also%
\begin{equation*}
P\left\{ N_{\alpha ,\left[ 0,T\right] }\left( \mathbf{Y}\right) >r\right\}
\leq c_{2}\exp \left\{ -\frac{r^{2/q}}{c_{2}}\right\} \quad \text{for all }%
r\geq 0.
\end{equation*}

From Lemma \ref{lemma_pvar_dom_by_N} we see that%
\begin{equation*}
\left\Vert Y\right\Vert _{p-var;[0,T]}\leq \left\Vert \mathbf{Y}\right\Vert
_{p-var;[0,T]}\leq \alpha ^{1/p}\left( N_{\alpha ,\left[ 0,T\right] }\left( 
\mathbf{Y}\right) +1\right)
\end{equation*}

which shows the claim for $\left\Vert Y\right\Vert _{p-var;[0,T]}$. The same
holds true for $\left\Vert Z\right\Vert _{p-var;[0,T]}$ by using Corollary %
\ref{cor_rough_integrals}.
\end{proof}

\begin{remark}
In the Brownian motion case ($q=1$), we recover the well-known fact that
solutions $Y$ of the Stratonovich SDE%
\begin{equation*}
dY=V\left( Y\right) \,\circ dB;\quad Y_{0}\in \mathbb{R}^{e}
\end{equation*}%
have Gaussian tails at any fixed point $Y_{t}$ provided $V$ is sufficiently
smooth. We also recover that the Stratonovich integral%
\begin{equation*}
\int_{0}^{t}\varphi \left( B\right) \,\circ dB
\end{equation*}%
has finite Gaussian tails for every $t\geq 0$, $\varphi $ sufficiently
smooth.
\end{remark}

\begin{proposition}
\label{prop_tail_estimates_linear_sdes_gaussian}Let $X$ be as in Proposition %
\ref{prop_tail_estimates_sdes_gaussian}. Let $Y\colon \lbrack
0,T]\rightarrow \mathbb{R}^{e}$ be the pathwise solution of the stochastic
linear RDE%
\begin{equation*}
dY=V\left( Y\right) \,d\mathbf{X;\quad }Y_{0}\in \mathbb{R}^{e}
\end{equation*}%
where $V=\left( V_{i}\right) _{i=1,\ldots ,d}$ is a collection of linear
vector fields of the form $V_{i}\left( z\right) =A_{i}z+b_{i}$, $A_{i}$ are $%
e\times e$ matrices and $b_{i}\in \mathbb{R}^{e}$. Then $\log \left(
\left\Vert Y\right\Vert _{p-var;[0,T]}\right) $ has a Weibull tail with
shape $2/q$. More precisely, if $K\geq \left\Vert \iota \right\Vert _{op}$, $%
M\geq V_{\rho -\text{var}}\left( R;\left[ 0,T\right] ^{2}\right) $ and $\nu
\geq \max_{i}\left( |A_{i}|+|b_{i}|\right) $ there is a constant $\eta =\eta
\left( p,q,\rho ,\nu ,K,M\right) >0$ such that%
\begin{equation*}
P\left( \log \left( \left\Vert Y\right\Vert _{p-var;[0,T]}\right) >r\right)
\leq \frac{1}{\eta }\exp \left( -\eta r^{2/q}\right) \quad \text{for all }%
r\geq 0
\end{equation*}
In particular, $\left\Vert Y\right\Vert _{p-var;[0,T]}$ has finite $L^{s}$%
-moments for any $s>0$ provided $q<2$.
\end{proposition}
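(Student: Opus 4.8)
The plan is to follow the proof of Proposition~\ref{prop_tail_estimates_sdes_gaussian} line by line, but with Corollary~\ref{cor_linear_RDEs} in place of Corollary~\ref{cor_full_rdes} and with an extra logarithm at the end. First I would use Lemma~\ref{lemma_Aa_pos_meas} to fix $\alpha=\alpha(\rho,p,M)>0$ with $P\{\,|||\mathbf{X}|||_{p\text{-var};[0,T]}^{p}\le\alpha/(2^{p}[p])\,\}\ge\tfrac{1}{2}$, and then Corollary~\ref{cor_cll_results_single_gaussian} (applied with $\hat a=\Phi^{-1}(\tfrac{1}{2})=0$ and the remark that lets one pass from $\bar N$ to $N$) to produce a constant $c_{1}=c_{1}(p,q,K,M)$ with $P\{\,N_{\alpha,[0,T]}(\mathbf{X})>r\,\}\le\exp\{-\tfrac{1}{2}(\alpha^{1/p}r^{1/q}/c_{1})^{2}\}$ for all $r\ge0$. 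This Weibull-$2/q$ tail for $N_{\alpha,[0,T]}(\mathbf{X})$ is the only probabilistic input.

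Next I would insert the deterministic estimate of Corollary~\ref{cor_linear_RDEs}: since $\nu\ge\max_{i}(|A_{i}|+|b_{i}|)$ and the initial point $Y_{0}\in\mathbb{R}^{e}$ is non-random, that corollary gives a constant $C=C(p,\nu,\alpha)$ with $N_{\alpha,[0,T]}(\mathbf{Y})\le C(1+|Y_{0}|)^{p}\exp(C\,N_{\alpha,[0,T]}(\mathbf{X}))$ almost surely. Combining this with Lemma~\ref{lemma_pvar_dom_by_N}, which gives $\Vert Y\Vert_{p\text{-var};[0,T]}\le\Vert\mathbf{Y}\Vert_{p\text{-var};[0,T]}\le\alpha^{1/p}(N_{\alpha,[0,T]}(\mathbf{Y})+1)$, and taking logarithms while using the elementary bound $\log(1+De^{Cx})\le\log(1+D)+Cx$ for $D,x\ge0$, I get an almost sure inequality $\log\Vert Y\Vert_{p\text{-var};[0,T]}\le a+b\,N_{\alpha,[0,T]}(\mathbf{X})$ with $b=C$ and $a=\tfrac{1}{p}\log\alpha+\log(1+C(1+|Y_{0}|)^{p})$; the whole dependence on the (fixed) initial condition is thereby confined to the additive constant $a$.

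Finally I would read off the tail: for $r\ge a$, $P(\log\Vert Y\Vert_{p\text{-var};[0,T]}>r)\le P(N_{\alpha,[0,T]}(\mathbf{X})>(r-a)/b)\le\exp\{-\tfrac{1}{2}(\alpha^{1/p}((r-a)/b)^{1/q}/c_{1})^{2}\}$, which decays like a Weibull tail of shape $2/q$, while for $0\le r<a$ one simply bounds the probability by $1$. A routine adjustment of constants (splitting at $r=2a$, say, so that $(r-a)/b$ may be replaced by $r/(2b)$, and then shrinking $\eta$ so that $\tfrac{1}{\eta}e^{-\eta r^{2/q}}$ dominates both the resulting $\exp(-\text{const}\cdot r^{2/q})$ bound and the trivial bound $1$ on the bounded range) yields $P(\log\Vert Y\Vert_{p\text{-var};[0,T]}>r)\le\tfrac{1}{\eta}\exp(-\eta r^{2/q})$ for all $r\ge0$ with $\eta=\eta(p,q,\rho,\nu,K,M)$. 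The concluding $L^{s}$-moment assertion is then immediate, since a Weibull tail of any positive shape on $\log\Vert Y\Vert_{p\text{-var};[0,T]}$ forces finite $L^{s}$-moments of $\Vert Y\Vert_{p\text{-var};[0,T]}$ for every $s>0$. I do not expect any genuine analytic obstacle here: all the substantive work has already been done in Corollaries~\ref{cor_cll_results_single_gaussian} and~\ref{cor_linear_RDEs}, and the only mildly delicate point is the constant-chasing through the logarithm, in particular checking that the initial condition enters there only additively.
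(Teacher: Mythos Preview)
Your proposal is correct and follows exactly the approach the paper indicates: the paper's own proof is the single line ``Same as for Proposition~\ref{prop_tail_estimates_sdes_gaussian} using Corollary~\ref{cor_linear_RDEs},'' and you have simply spelled out what that means, including the extra logarithm needed to absorb the exponential in Corollary~\ref{cor_linear_RDEs}. One small slip in your last sentence: a Weibull tail of \emph{any} positive shape on $\log\Vert Y\Vert$ does not give all $L^{s}$-moments of $\Vert Y\Vert$; you need the shape $2/q>1$, which is precisely the hypothesis $q<2$ in the statement.
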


\begin{proof}
Same as for Proposition \ref{prop_tail_estimates_sdes_gaussian} using
Corollary \ref{cor_linear_RDEs}.
\end{proof}

\begin{remark}
In the case $q=1$, which covers Brownian driving signals, we have log-normal
tails. This is in agreement with trivial examples such as the standard
Black-Scholes model in which the stock price $S_{t}$ is log-normally
distributed.
\end{remark}

\begin{remark}
The same conclusion holds for unbounded vector fields as seen in remark \ref%
{remark_unbounded_vf}.
\end{remark}

\subsection{The Jacobian of the solution flow for SDEs driven by Gaussian
signals}

Let $x\colon \left[ 0,T\right] \rightarrow \mathbb{R}^{d}$ be smooth and let 
$V=\left( V^{1},\ldots ,V^{d}\right) \colon \mathbb{R}^{e}\rightarrow 
\mathbb{R}^{e}$ be a collection of vector fields. We can interpret $V$ as a
function $V\colon \mathbb{R}^{e}\rightarrow L\left( \mathbb{R}^{d},\mathbb{R}%
^{e}\right) $ with derivative $DV\colon \mathbb{R}^{e}\rightarrow L\left( 
\mathbb{R}^{e},L\left( \mathbb{R}^{d},\mathbb{R}^{e}\right) \right) \cong
L\left( \mathbb{R}^{d},\text{End}\left( \mathbb{R}^{e}\right) \right) $. It
is well-known that for sufficiently smooth $V$, the ODE%
\begin{equation*}
dy=V\left( y\right) \,dx
\end{equation*}%
has a solution for every starting point $y_{0}$ and the solution flow $%
y_{0}\rightarrow U_{t\leftarrow 0}\left( y_{0}\right) =y_{t}$ is (Fr\'{e}%
chet) differentiable. We denote its derivative by $J_{t\leftarrow
0}^{x}\left( y_{0}\right) =DU_{t\leftarrow 0}\left( \cdot \right) |_{\cdot
=y_{0}}$. Moreover, for fixed $y_{0}$, the Jacobian $J_{t}=J_{t\leftarrow
0}^{x}\left( y_{0}\right) $ is given as the solution of the linear ODE%
\begin{equation*}
dJ_{t}=dM_{t}\cdot J_{t};\quad J_{0}=Id
\end{equation*}%
where $M_{t}\in \,$End$\left( \mathbb{R}^{e}\right) $ is given by the
integral%
\begin{equation}
M_{t}=\int_{0}^{t}DV\left( y_{s}\right) \,dx_{s}\text{.}
\label{eqn_def_M_for_jacobian}
\end{equation}%
If $\mathbf{x}$ is a $p$-rough path, one proceeds in a similar fashion.
First, in order to make sense of $\left( \ref{eqn_def_M_for_jacobian}\right) 
$ if $\mathbf{x}$ and $\mathbf{y}$ are rough paths, one has to define the
joint rough path $\left( \mathbf{x,y}\right) =\mathbf{z}\in C^{p-var}\left( %
\left[ 0,T\right] ,G^{\left[ p\right] }\left( \mathbb{R}^{d}\oplus \mathbb{R}%
^{e}\right) \right) $ first. To do so, one defines $\mathbf{z}$ as the
solution of the full RDE%
\begin{equation*}
d\mathbf{z}=\tilde{V}\left( \mathbf{z}\right) \,d\mathbf{x;\quad z}_{0}=\exp
\left( 0,y_{0}\right) .
\end{equation*}%
where $\tilde{V}=\left( Id,V\right) $. Then, one defines $\mathbf{M}\in
C^{p-var}\left( \left[ 0,T\right] ,G^{\left[ p\right] }\left( \mathbb{R}%
^{e\times e}\right) \right) $ as the rough integral%
\begin{equation*}
\mathbf{M}_{t}=\int_{0}^{t}\phi \left( z\right) \,d\mathbf{z}
\end{equation*}%
where $\phi \colon \mathbb{R}^{d}\oplus \mathbb{R}^{e}\rightarrow L\left( 
\mathbb{R}^{d}\oplus \mathbb{R}^{e},\text{End}\left( \mathbb{R}^{e}\right)
\right) $ is given by $\phi \left( x,y\right) \left( x^{\prime },y^{\prime
}\right) =DV\left( y\right) \left( x^{\prime }\right) $ for all $x,x^{\prime
}\in \mathbb{R}^{d}$ and $y,y^{\prime }\in \mathbb{R}^{e}$. Finally, one
obtains $J_{t}^{\mathbf{x}}=J_{t\leftarrow 0}^{\mathbf{x}}\left(
y_{0}\right) $ as the solution of the linear RDE%
\begin{equation*}
dJ_{t}^{\mathbf{x}}=d\mathbf{M}_{t}\cdot J_{t}^{\mathbf{x}};\quad J_{0}=Id.
\end{equation*}%
All this can be made rigorous; for instance, see \cite{FV10}, Theorem 11.3.
Next, we give an alternative proof of the main result of \cite{CLL},
slightly sharpened in the sense that we consider the $p$-variation norm
instead of the supremum norm.

\begin{proposition}
Let $X$ be a centred Gaussian process in $\mathbb{R}^{d}$ with independent
components and covariance $R_{X}$ of finite $\rho $-variation, $\rho <2$.
Consider the Gaussian $p$-rough paths $\mathbf{X}$ for $p>2\rho $ and assume
that there is a continuous embedding%
\begin{equation*}
\iota \colon H\hookrightarrow C^{q-var}
\end{equation*}%
where $\frac{1}{p}+\frac{1}{q}>1$. Then $\log \left( \left\Vert J_{\cdot
\leftarrow 0}^{\mathbf{X}}\left( y_{0}\right) \right\Vert _{p-var;\left[ 0,T%
\right] }\right) $ has a Weibull tail with shape $2/q$. In particular, if $%
q<2$, this implies that $\left\Vert J_{\cdot \leftarrow 0}^{\mathbf{X}%
}\left( y_{0}\right) \right\Vert _{p-var;\left[ 0,T\right] }$ has finite $%
L^{r}$-moments for any $r>0$.
\end{proposition}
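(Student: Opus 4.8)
The plan is to realise $J_{\cdot\leftarrow 0}^{\mathbf{X}}\left( y_{0}\right)$ as the composition of the three elementary operations described above the statement --- a full RDE, a rough integral, and a linear RDE --- each of which has already been shown to transfer Weibull‑type integrability of $N_{\alpha}$, and then simply to chain the resulting estimates. Fix any $\alpha>0$. By Corollary \ref{cor_cll_results_single_gaussian} together with the remark following it (which lets us work with $N_{\alpha}$ rather than $\bar{N}_{\alpha}$), the random variable $N_{\alpha,\left[ 0,T\right] }\left( \mathbf{X}\right)$ has a Weibull tail with shape $2/q$. All the rough‑path objects below are well defined for sufficiently regular $V$; see \cite{FV10}, Theorem 11.3.

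First, the joint rough path $\mathbf{z}=\left( \mathbf{x},\mathbf{y}\right)$ solves the full RDE $d\mathbf{z}=\tilde{V}\left( \mathbf{z}\right) \,d\mathbf{x}$ with $\tilde{V}=\left( \mathrm{Id},V\right)$; the $\mathrm{Id}$‑part consists of constant (hence $Lip^{\gamma}$) vector fields and $V\in Lip^{\gamma}$ with $\gamma>p$, so $\tilde{V}$ is a collection of $Lip^{\gamma}$ vector fields and Corollary \ref{cor_full_rdes} gives $N_{\alpha,\left[ 0,T\right] }\left( \mathbf{z}\right) \leq C\left( N_{\alpha,\left[ 0,T\right] }\left( \mathbf{x}\right) +1\right)$. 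Second, $\mathbf{M}=\int\phi\left( z\right) \,d\mathbf{z}$ is a rough integral whose integrand $\phi$ is built from $DV\in Lip^{\gamma-1}$ and hence itself lies in $Lip^{\gamma-1}$, so Corollary \ref{cor_rough_integrals} gives $N_{\alpha,\left[ 0,T\right] }\left( \mathbf{M}\right) \leq C\left( N_{\alpha,\left[ 0,T\right] }\left( \mathbf{z}\right) +1\right)$. Third, $J^{\mathbf{X}}$ solves the linear RDE $dJ=dM\cdot J$, $J_{0}=\mathrm{Id}$, which is exactly of the type covered by Corollary \ref{cor_linear_RDEs} --- linear vector fields with no affine term, driving rough path $\mathbf{M}$, and structural coefficients bounded by a constant depending only on $e$ --- yielding $N_{\alpha,\left[ 0,T\right] }\left( \mathbf{J}\right) \leq C\left( 1+\left\vert J_{0}\right\vert \right) ^{p}\exp\left( C\,N_{\alpha,\left[ 0,T\right] }\left( \mathbf{M}\right) \right)$, with $\left\vert J_{0}\right\vert =\left\vert \mathrm{Id}\right\vert$ a fixed constant.

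Combining these three inequalities with Lemma \ref{lemma_pvar_dom_by_N} yields
\begin{align*}
\log\left\Vert J_{\cdot\leftarrow 0}^{\mathbf{X}}\left( y_{0}\right) \right\Vert _{p-var;\left[ 0,T\right] }
&\leq\log\left( \alpha^{1/p}\left( N_{\alpha,\left[ 0,T\right] }\left( \mathbf{J}\right) +1\right) \right) \\
&\leq c_{1}+c_{2}\,N_{\alpha,\left[ 0,T\right] }\left( \mathbf{M}\right) \ \leq\ c_{3}+c_{4}\,N_{\alpha,\left[ 0,T\right] }\left( \mathbf{X}\right)
\end{align*}
for suitable constants $c_{1},\ldots ,c_{4}$ with $c_{4}>0$. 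Hence the upper tail of $\log\left\Vert J_{\cdot\leftarrow 0}^{\mathbf{X}}\left( y_{0}\right) \right\Vert _{p-var;\left[ 0,T\right] }$ is dominated by that of $c_{3}+c_{4}N_{\alpha,\left[ 0,T\right] }\left( \mathbf{X}\right)$, which is again a Weibull tail of shape $2/q$; this proves the first assertion. For the moment statement, note that $p>2\rho\geq 2$ forces $q<2$, i.e. $2/q>1$, so that $E\big[\left\Vert J_{\cdot\leftarrow 0}^{\mathbf{X}}\left( y_{0}\right) \right\Vert _{p-var;\left[ 0,T\right] }^{r}\big]=E\big[\exp\big( r\log\left\Vert J_{\cdot\leftarrow 0}^{\mathbf{X}}\left( y_{0}\right) \right\Vert _{p-var;\left[ 0,T\right] }\big)\big]<\infty$ for every $r>0$.

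The substance of the argument lies entirely in the earlier sections, so I do not anticipate a genuine obstacle; the one point that requires care is the order of the composition. Only the linear‑RDE step produces an exponential blow‑up, $N\left( \mathbf{J}\right) \lesssim\exp\left( C\,N\left( \mathbf{M}\right) \right)$, so it must come last and be followed immediately by the logarithm --- inserting any further locally linear map after $J$ would destroy the Weibull exponent $2/q$. One should also verify that $\tilde{V}$ and $\phi$ genuinely have the claimed regularity, so that Corollaries \ref{cor_full_rdes} and \ref{cor_rough_integrals} apply with the given $\gamma$, and that the $J$‑equation really is linear in the narrow sense of Corollary \ref{cor_linear_RDEs} (homogeneous, with $\omega$‑independent structural constants).
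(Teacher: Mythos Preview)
Your proof is correct and follows essentially the same route as the paper: invoke Corollary~\ref{cor_cll_results_single_gaussian} for the Weibull tail of $N_{\alpha,[0,T]}(\mathbf{X})$, chain Corollaries~\ref{cor_full_rdes}, \ref{cor_rough_integrals} and~\ref{cor_linear_RDEs} through the three-step construction $\mathbf{X}\to\mathbf{z}\to\mathbf{M}\to\mathbf{J}$, and finish with Lemma~\ref{lemma_pvar_dom_by_N}. Your version is in fact more explicit than the paper's (which simply says ``combining the Corollaries''), and your side remark that $p>2\rho\geq 2$ together with $1/p+1/q>1$ already forces $q<2$ is a correct observation the paper does not spell out.
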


\begin{proof}
From Corollary \ref{cor_cll_results_single_gaussian} we know that $N_{1,%
\left[ 0,T\right] }\left( \mathbf{X}\right) $ has a Weibull tail with shape $%
2/q$. Combining the Corollaries \ref{cor_full_rdes}, \ref%
{cor_rough_integrals} and \ref{cor_linear_RDEs} shows that there is a
constant $C$ such that%
\begin{equation*}
\log \left( N_{1,\left[ 0,T\right] }\left( \mathbf{J}_{\cdot \leftarrow 0}^{%
\mathbf{X}}\left( y_{0}\right) \right) +1\right) \leq C\left( N_{1;\left[ 0,T%
\right] }\left( \mathbf{X}\right) +1\right) .
\end{equation*}%
From Lemma \ref{lemma_pvar_dom_by_N} we know that%
\begin{equation*}
\left\Vert J_{\cdot \leftarrow 0}^{\mathbf{X}}\left( y_{0}\right)
\right\Vert _{p-var;\left[ 0,T\right] }\leq \left\Vert \mathbf{J}_{\cdot
\leftarrow 0}^{\mathbf{X}}\left( y_{0}\right) \right\Vert _{p-var;\left[ 0,T%
\right] }\leq N_{1,\left[ 0,T\right] }\left( \mathbf{J}_{\cdot \leftarrow
0}^{\mathbf{X}}\left( y_{0}\right) \right) +1.
\end{equation*}
\end{proof}

\subsection{An example from rough SPDE theory\label%
{section_unif_integr_rough_integrals}}

In situations where one performs a change of measure to an equivalent
measure on a path space, one often has to make sense of the exponential
moments of a stochastic integral, i.e. to show that%
\begin{equation}
E\left( \exp \left\{ \int G\left( X\right) \,dX+\int F\left( X\right)
\,dt\right\} \right)  \label{eqn_exp_integral}
\end{equation}%
is finite for a given process $X$ and some suitable maps $G$ and $F$. The
second integral is often trivially handled (say, when $F$ is bounded) and
thus take $F=0$ in what follows. Various situations in the literarture (e.g. 
\cite{H11}, \cite{CDFO}) require to bound (\ref{eqn_exp_integral}) uniformly
over a family of processes, say $\left( X^{\varepsilon }:\varepsilon
>0\right) $. We will see in this section that our results are perfectly
suited for doing this.

In the following, we study the situation of \cite{H11}, section 4. Here $%
\psi ^{\varepsilon }=\psi ^{\varepsilon }\left( t,x;\omega \right) $ is the
stationary (in time) solution to the damped stochastic heat equation with
hyper-viscosity of parameter $\varepsilon >0$,%
\begin{equation*}
d\psi ^{\epsilon }=-\epsilon ^{2}\partial _{xxxx}\psi ^{\epsilon
}\,dt+\left( \partial _{xx}-1\right) \psi ^{\epsilon }\,dt+\sqrt{2}\,dW_{t}
\end{equation*}%
where $W$ is space-time white noise, a cylindrical Wiener process over $%
L^{2}\left( \mathbb{T}\right) $ where $\mathbb{T}$ denotes the torus, say $%
\left[ -\pi ,\pi \right] $ with periodic boundary conditions. Following \cite%
{H11} we fix $t$, so that the "spatial" interval $\left[ -\pi ,\pi \right] $
plays the role of our previous "time-horizon" $\left[ 0,T\right] $. Note that%
$\mathbb{\,}x\mapsto \psi ^{\epsilon }\left( x,t\right) $ is a centred
Gaussian process on $\mathbb{T}$, with independent components and covariance
given by%
\begin{equation*}
E\left( \psi ^{\epsilon }\left( x,t\right) \otimes \psi ^{\epsilon }\left(
y,t\right) \right) =R^{\epsilon }\left( x,y\right) I=K^{\epsilon }\left(
x-y\right) I
\end{equation*}%
where $K_{\epsilon }\left( x\right) $ is proportional to 
\begin{equation*}
\sum_{k\in \mathbb{Z}}\frac{\cos \left( kx\right) }{1+k^{2}+\epsilon
^{2}k^{4}}.
\end{equation*}%
As was pointed out by Hairer, it can be very fruitful in a non-linear SPDE
context to consider $\lim_{\varepsilon \rightarrow 0}\psi ^{\varepsilon
}\left( t,\cdot ,\omega \right) $ as random spatial rough path. To this end,
it is stated (without proof) in \cite{H11} that the covariance of $\psi
^{\varepsilon }$ has fnite $\rho $-variation in 2D sense, $\rho >1$,
uniformly in $\varepsilon $. In fact, we can show something slightly
stronger. Following \cite{H11}, $\psi ^{\varepsilon }$ is $C^{1}$ in $x$ for
every $\varepsilon >0$, and can be seen as $p$-rough path, any $p>2$, when $%
\varepsilon =0$.

\begin{lemma}
\label{lemma_unif_1_var_hyperviscosity}The map$\mathbb{\ T}^{2}\ni \left(
x,y\right) \mapsto R^{\epsilon }\left( x,y\right) $ has finite $1$-variation
in 2D sense, uniformly in $\varepsilon $. That is, 
\begin{equation*}
M:=\sup_{\varepsilon \geq 0}\,V_{1-var}\left( R^{\varepsilon };\mathbb{T}%
^{2}\right) <\infty .
\end{equation*}
\end{lemma}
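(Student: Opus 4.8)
The plan is to reduce the two–dimensional statement to a one–dimensional estimate for the second derivative of the kernel $K^{\varepsilon}$, and then to prove that estimate by factorising the Fourier symbol $1+k^{2}+\varepsilon^{2}k^{4}$. Throughout I normalise so that $\widehat{K^{\varepsilon}}(k)=(1+k^{2}+\varepsilon^{2}k^{4})^{-1}$ (the overall proportionality constant being irrelevant for a finiteness statement) and write $(K^{\varepsilon})',(K^{\varepsilon})''$ for the distributional derivatives on $\mathbb{T}=[-\pi,\pi]$.

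\emph{Step 1 (reduction to a $1$D bound).} Since $R^{\varepsilon}(x,y)=K^{\varepsilon}(x-y)$ and $K^{\varepsilon}\in C^{\infty}(\mathbb{T})$ for $\varepsilon>0$, every rectangular increment is a second difference,
\[
R^{\varepsilon}(t,v)-R^{\varepsilon}(t,u)-R^{\varepsilon}(s,v)+R^{\varepsilon}(s,u)=\int_{s}^{t}\bigl((K^{\varepsilon})'(\xi-v)-(K^{\varepsilon})'(\xi-u)\bigr)\,d\xi .
\]
Bounding $|(K^{\varepsilon})'(\xi-v)-(K^{\varepsilon})'(\xi-u)|$ by the mass of $|(K^{\varepsilon})''|$ on $[\xi-v,\xi-u]$ and telescoping over any grid $(y_{j})$ of $\mathbb{T}$ — the resulting intervals fill exactly one period, so their masses sum to $\|(K^{\varepsilon})''\|_{L^{1}(\mathbb{T})}$ — one gets, for arbitrary grids $(x_{i}),(y_{j})$ of $\mathbb{T}$,
\[
\sum_{i,j}\bigl|R^{\varepsilon}(x_{i+1},y_{j+1})-R^{\varepsilon}(x_{i+1},y_{j})-R^{\varepsilon}(x_{i},y_{j+1})+R^{\varepsilon}(x_{i},y_{j})\bigr|\le 2\pi\,\|(K^{\varepsilon})''\|_{L^{1}(\mathbb{T})} .
\]
Hence $V_{1-var}(R^{\varepsilon};\mathbb{T}^{2})\le 2\pi\,\|(K^{\varepsilon})''\|_{L^{1}(\mathbb{T})}$, and it suffices to show $\sup_{\varepsilon>0}\|(K^{\varepsilon})''\|_{L^{1}(\mathbb{T})}<\infty$. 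The value $\varepsilon=0$ then comes for free, since each partition sum is continuous in $\varepsilon$ and therefore $V_{1-var}(R^{0};\mathbb{T}^{2})\le\sup_{\varepsilon>0}V_{1-var}(R^{\varepsilon};\mathbb{T}^{2})$.

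\emph{Step 2 (large $\varepsilon$).} For $\varepsilon\ge\tfrac14$ one has $1+k^{2}+\varepsilon^{2}k^{4}\ge k^{4}/16$, so the Fourier coefficients $-k^{2}/(1+k^{2}+\varepsilon^{2}k^{4})$ of $(K^{\varepsilon})''$ are dominated by $16/k^{2}$ for $k\neq0$ (and vanish at $k=0$); summing, $\|(K^{\varepsilon})''\|_{L^{\infty}(\mathbb{T})}$, hence $\|(K^{\varepsilon})''\|_{L^{1}(\mathbb{T})}\le 2\pi\|(K^{\varepsilon})''\|_{L^{\infty}(\mathbb{T})}$, is bounded uniformly over $\varepsilon\ge\tfrac14$. \emph{Step 3 (small $\varepsilon$, the key case).} For $0<\varepsilon<\tfrac12$ factor $1+k^{2}+\varepsilon^{2}k^{4}=\varepsilon^{2}(k^{2}+a)(k^{2}+b)$ with $a+b=ab=\varepsilon^{-2}$; then $0<a<b$ and $b-a=\varepsilon^{-2}\sqrt{1-4\varepsilon^{2}}$. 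Partial fractions give $\widehat{K^{\varepsilon}}=\tfrac{1}{\varepsilon^{2}(b-a)}(\widehat{g_{a}}-\widehat{g_{b}})$, where $g_{c}$ is the nonnegative Green's function of $-\partial_{x}^{2}+c$ on $\mathbb{T}$, satisfying $\|g_{c}\|_{L^{1}(\mathbb{T})}=\widehat{g_{c}}(0)=1/c$ and $g_{c}''=c\,g_{c}-\delta_{0}$. The two $\delta_{0}$'s cancel, so $(K^{\varepsilon})''=\tfrac{1}{\varepsilon^{2}(b-a)}(a\,g_{a}-b\,g_{b})$ and
\[
\|(K^{\varepsilon})''\|_{L^{1}(\mathbb{T})}\le\frac{a\|g_{a}\|_{L^{1}}+b\|g_{b}\|_{L^{1}}}{\varepsilon^{2}(b-a)}=\frac{2}{\varepsilon^{2}(b-a)}=\frac{2}{\sqrt{1-4\varepsilon^{2}}}\le\frac{4}{\sqrt{3}}\qquad(0<\varepsilon\le\tfrac14).
\]
Combining Steps 1–3 gives $\sup_{\varepsilon\ge0}V_{1-var}(R^{\varepsilon};\mathbb{T}^{2})<\infty$, which is the assertion.

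The main obstacle is Step 3: every naive estimate (the $\ell^{1}$–norm of the Fourier coefficients, or $\|(K^{\varepsilon})''\|_{L^{\infty}}$) diverges like $\varepsilon^{-1}$ as $\varepsilon\downarrow0$, and this is unavoidable because $(K^{0})''=g_{1}-\delta_{0}$ is genuinely a measure with an atom; the symbol factorisation is exactly what exposes the cancellation keeping the \emph{total variation} of $(K^{\varepsilon})''$ bounded. The only other point needing (routine) care is the measure–theoretic telescoping in Step 1 when $(K^{\varepsilon})''$ is merely of bounded variation — but that case ($\varepsilon=0$) is bypassed by the continuity argument above.
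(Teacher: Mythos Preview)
Your proof is correct. Both arguments share the same reduction in Step~1 (for smooth covariance, $V_{1\text{-var}}(R^\varepsilon;\mathbb{T}^2)=\int_{\mathbb{T}^2}|K_\varepsilon''(x-y)|\,dx\,dy=2\pi\|K_\varepsilon''\|_{L^1}$) and the same lower-semicontinuity to handle $\varepsilon=0$, but the way you bound $\|K_\varepsilon''\|_{L^1}$ is genuinely different from the paper.

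The paper adds and subtracts a \emph{single} Green's function: it writes
\[
\frac{k^{2}}{1+k^{2}+\varepsilon^{2}k^{4}}=\frac{1}{1+\varepsilon^{2}k^{2}}-\frac{1}{(1+\varepsilon^{2}k^{2})(1+k^{2}+\varepsilon^{2}k^{4})},
\]
bounds the second sum in $\ell^{1}$ (coefficients $O(k^{-2})$ uniformly), and uses the explicit closed form $\sum_{k}\frac{\cos(kx)}{1+\varepsilon^{2}k^{2}}=\frac{\pi\cosh(\varepsilon^{-1}(|x|-\pi))}{\varepsilon\sinh(\pi/\varepsilon)}$ together with a direct computation of $\int_{\mathbb{T}^{2}}\cosh(\varepsilon^{-1}(|x-y|-\pi))\,dx\,dy$. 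No case split in $\varepsilon$ is needed. You instead factor the symbol completely, $1+k^{2}+\varepsilon^{2}k^{4}=\varepsilon^{2}(k^{2}+a)(k^{2}+b)$, express $K^{\varepsilon}$ as a difference of \emph{two} Green's functions, and exploit the exact cancellation of the $\delta_{0}$'s in $g_{c}''=cg_{c}-\delta_{0}$; this converts the $L^{1}$-bound into pure algebra ($a\|g_{a}\|_{L^{1}}=b\|g_{b}\|_{L^{1}}=1$), at the price of needing real roots ($\varepsilon<1/2$) and hence a separate easy argument for large $\varepsilon$. Your route avoids any explicit $\cosh$-integration and makes the mechanism---that the atom at $0$ of $(K^{0})''$ is what kills naive $L^{\infty}$ or $\ell^{1}$ estimates, and that it disappears in the partial-fraction difference---more transparent; the paper's route is more uniform in $\varepsilon$ and slightly more hands-on. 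Up to harmless normalisation constants (your convention $\widehat{g_{c}}(0)=\|g_{c}\|_{L^{1}}$ fixes one choice), all steps check out.
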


\begin{proof}
By lower semi-continuity of variation norms under pointwise convergence, it
suffices to consider $\varepsilon >0$. (Alternatively, the case $\varepsilon
=0$ is treated explicitly in \cite{H11}). We then note that%
\begin{equation*}
\sum_{k\in \mathbb{Z}}\frac{\cos \left( kx\right) }{1+\epsilon ^{2}k^{2}}=%
\frac{\pi \cosh \left( \frac{1}{\epsilon }\left( |x|-\pi \right) \right) }{%
\epsilon \sinh \left( \frac{\pi }{\epsilon }\right) }
\end{equation*}%
in $L^{2}\left( \mathbb{T}\right) $ as may be seen by Fourier expansion on $%
\left[ -\pi ,\pi \right] $ of the function $x\mapsto \cosh \left( \frac{1}{%
\epsilon }\left( |x|-\pi \right) \right) $. Since $\left\vert \partial
_{x,y}R_{\epsilon }\left( x,y\right) \right\vert =\left\vert K_{\epsilon
}^{\prime \prime }\left( x-y\right) \right\vert $ we have%
\begin{equation*}
V_{1-var}\left( R^{\varepsilon };\mathbb{T}^{2}\right) =\int_{\mathbb{T}%
^{2}}\left\vert \partial _{x,y}R_{\epsilon }\left( x,y\right) \right\vert
\,dx\,dy=\int_{\mathbb{T}^{2}}\left\vert K_{\epsilon }^{\prime \prime
}\left( x-y\right) \right\vert \,dx\,dy
\end{equation*}%
On the other hand, 
\begin{eqnarray*}
\left\vert K_{\epsilon }^{\prime \prime }\left( x\right) \right\vert  &\leq
&\left\vert \sum_{k\in \mathbb{Z}}\left( \frac{1}{1+\epsilon ^{2}k^{2}}-%
\frac{k^{2}}{1+k^{2}+\epsilon ^{2}k^{4}}\right) \cos \left( kx\right)
\right\vert +\frac{\pi \cosh \left( \frac{1}{\epsilon }\left( |x|-\pi
\right) \right) }{\epsilon \sinh \left( \frac{\pi }{\epsilon }\right) } \\
&=&\left\vert 1+\sum_{k\neq 0}\frac{1}{k^{2}}\frac{\cos \left( kx\right) }{%
\left( 1+\epsilon ^{2}k^{2}\right) \left( 1/k^{2}+1+\epsilon
^{2}k^{2}\right) }\right\vert +\frac{\pi \cosh \left( \frac{1}{\epsilon }%
\left( |x|-\pi \right) \right) }{\epsilon \sinh \left( \frac{\pi }{\epsilon }%
\right) } \\
&\leq &1+\sum_{k\neq 0}\frac{1}{k^{2}}+\frac{\pi \cosh \left( \frac{1}{%
\epsilon }\left( |x|-\pi \right) \right) }{\epsilon \sinh \left( \frac{\pi }{%
\epsilon }\right) } \\
&\leq &1+\frac{\pi ^{2}}{3}+\frac{\pi \cosh \left( \frac{1}{\epsilon }\left(
|x|-\pi \right) \right) }{\epsilon \sinh \left( \frac{\pi }{\epsilon }%
\right) }.
\end{eqnarray*}%
Hence%
\begin{equation*}
\int_{\mathbb{T}^{2}}\left\vert K_{\epsilon }^{\prime \prime }\left(
x-y\right) \right\vert \,dx\,dy\leq \left( 2\pi \right) ^{2}\left( 1+\frac{%
\pi ^{2}}{3}\right) +\frac{\pi }{\epsilon \sinh \left( \frac{\pi }{\epsilon }%
\right) }\int_{\mathbb{T}^{2}}\cosh \left( \frac{1}{\epsilon }\left(
|x-y|-\pi \right) \right) \,dx\,dy.
\end{equation*}%
We leave to the reader to see that the final integral is bounded,
independent of $\varepsilon $. For instance, introduce $z=x-y$ as new
variable so that only%
\begin{eqnarray*}
&&\frac{\pi }{\epsilon \sinh \left( \frac{\pi }{\epsilon }\right) }%
\int_{-2\pi }^{2\pi }\cosh \left( \frac{1}{\epsilon }\left( \left\vert
z\right\vert -\pi \right) \right) \,dz \\
&=&4\frac{\pi }{\epsilon \sinh \frac{\pi }{\epsilon }}\int_{0}^{\pi }\cosh
\left( \frac{z}{\epsilon }\right) \,dz
\end{eqnarray*}%
needs to be controlled. Using $\cosh \approx \sinh \approx \exp $ for large
arguments\ (or integrating explicitly ...) we get 
\begin{eqnarray*}
\frac{1}{\epsilon \sinh \left( \frac{\pi }{\epsilon }\right) }\int_{0}^{\pi
}\cosh \left( \frac{z}{\epsilon }\right) \,dz\, &\approx &\int_{0}^{\pi }%
\frac{\exp \left( \frac{z}{\epsilon }\right) }{\epsilon \exp \left( \frac{%
\pi }{\epsilon }\right) }\,dz \\
&=&\frac{\exp \left( \frac{\pi }{\varepsilon }\right) -1}{\exp \left( \frac{%
\pi }{\epsilon }\right) } \\
&\leq &1
\end{eqnarray*}
\end{proof}

\bigskip

We then have the following sharpening of \cite{H11},\ Theorem 5.1.

\begin{theorem}
\bigskip Fix $\gamma >2$ and $p\in \left( 2,\gamma \right) $. Assume $%
G=\left( G_{i}\right) _{i=1,\dots ,d}$ is a collection of $Lip^{\gamma
-1}\left( \mathbb{R}^{d},\mathbb{R}^{e}\right) $ maps. Then for some
constant $\eta =\eta \left( \gamma ,p,\left\Vert G\right\Vert _{Lip^{\gamma
-1}}\right) >0$ we have the uniform estimate%
\begin{equation*}
\sup_{t\in \lbrack 0,\infty )}\sup_{\varepsilon \geq 0}E\left\{ \exp \left(
\eta \left\vert \int_{\mathbb{T}}G\left( \psi ^{\epsilon }\left( x,t\right)
\right) \,d_{x}\psi ^{\epsilon }\left( x,t\right) \right\vert ^{2}\right)
\right\} <\infty .
\end{equation*}%
(When $\varepsilon >0$, $\psi ^{\epsilon }$ is known to be $C^{1}$ in $x$ so
that we deal with Riemann--Stieltjes integrals, when $\varepsilon =0$, the
integral is understood in rough path sense.)
\end{theorem}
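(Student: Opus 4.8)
The plan is to feed the uniform $1$-variation bound of Lemma~\ref{lemma_unif_1_var_hyperviscosity} into the Gaussian-tail estimate of Corollary~\ref{cor_cll_results_single_gaussian} and the deterministic transfer estimate of Corollary~\ref{cor_rough_integrals}, being careful that every constant produced along the way depends only on $p,\gamma,\|G\|_{\mathrm{Lip}^{\gamma-1}}$ and on $M=\sup_{\varepsilon\geq0}V_{1\text{-var}}(R^{\varepsilon};\mathbb{T}^{2})$, and never on $\varepsilon$ or $t$. Since $\psi^{\varepsilon}$ is stationary in time, the law of the centred Gaussian process $x\mapsto\psi^{\varepsilon}(x,t)$ on $\mathbb{T}$ — independent components, covariance $R^{\varepsilon}(x,y)=K^{\varepsilon}(x-y)$ — does not depend on $t$, so the supremum over $t\in[0,\infty)$ is over identical quantities and may be dropped. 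Fix $p\in(2,\gamma)$.

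I first set up the framework of Section~3. By Lemma~\ref{lemma_unif_1_var_hyperviscosity}, each $\psi^{\varepsilon}(\cdot,t)$ has covariance of finite $\rho$-variation with $\rho=1<2$, uniformly, so its enhanced Gaussian process $\mathbf{X}^{\varepsilon}$ lives in $p$-rough path space (for $\varepsilon>0$ this is simply the canonical Young lift of the $C^{1}$ path $\psi^{\varepsilon}(\cdot,t)$, for $\varepsilon=0$ the genuine Gaussian rough path lift). By \cite[Proposition~15.7]{FV10}, $|h|_{1\text{-var}}\leq\sqrt{M}\,\|h\|_{\mathcal{H}_{\varepsilon}}$ for $h$ in the Cameron--Martin space $\mathcal{H}_{\varepsilon}$ of $\psi^{\varepsilon}(\cdot,t)$, i.e.\ $\mathcal{H}_{\varepsilon}\hookrightarrow C^{1\text{-var}}$ with operator norm $\leq\sqrt{M}$; complementary Young regularity holds with $q=1$, since $\frac{1}{p}+\frac{1}{q}=\frac{1}{p}+1>1$. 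Then Lemma~\ref{lemma_Aa_pos_meas} produces $\alpha=\alpha(p,M)>0$ with
\begin{equation*}
P\Bigl\{\,|||\mathbf{X}^{\varepsilon}|||_{p\text{-var};\mathbb{T}}^{p}\leq\frac{\alpha}{2^{p}[p]}\,\Bigr\}\geq\frac12\qquad\text{for all }\varepsilon\geq0,
\end{equation*}
so that the parameter $\hat a$ in Corollary~\ref{cor_cll_results_single_gaussian} may be taken equal to $0$. That corollary, with $q=1$ (hence shape parameter $2/q=2$) and $K=\sqrt{M}$, together with the remark following it that lets us replace $\bar N_{\alpha}$ by $N_{\alpha}$, yields a constant $c_{0}=c_{0}(p,M)>0$ with
\begin{equation*}
P\bigl(N_{\alpha,\mathbb{T}}(\mathbf{X}^{\varepsilon})>r\bigr)\leq\exp\bigl(-c_{0}r^{2}\bigr)\qquad\text{for all }r\geq0,\ \varepsilon\geq0.
\end{equation*}

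Now comes the deterministic transfer, applied pathwise. The quantity of interest is the first level $\pi_{1}(\mathbf{Z}^{\varepsilon})$ over all of $\mathbb{T}$, where $\mathbf{Z}^{\varepsilon}=\int G(\psi^{\varepsilon})\,d\mathbf{X}^{\varepsilon}$ is the rough integral of the $\mathrm{Lip}^{\gamma-1}$ one-form $\varphi=G$ against $\mathbf{X}^{\varepsilon}$ (for $\varepsilon>0$ this coincides with the Riemann--Stieltjes integral, so one and the same chain of estimates covers $\varepsilon=0$ and $\varepsilon>0$). Corollary~\ref{cor_rough_integrals}, with $\nu=\|G\|_{\mathrm{Lip}^{\gamma-1}}$ and the above $\alpha$, gives $N_{\alpha,\mathbb{T}}(\mathbf{Z}^{\varepsilon})\leq C\bigl(N_{\alpha,\mathbb{T}}(\mathbf{X}^{\varepsilon})+1\bigr)$ with $C=C(p,\gamma,\|G\|_{\mathrm{Lip}^{\gamma-1}},M)$, and Lemma~\ref{lemma_pvar_dom_by_N} then yields
\begin{equation*}
\Bigl|\int_{\mathbb{T}}G(\psi^{\varepsilon}(x,t))\,d_{x}\psi^{\varepsilon}(x,t)\Bigr|\leq\bigl\|\mathbf{Z}^{\varepsilon}\bigr\|_{p\text{-var};\mathbb{T}}\leq\alpha^{1/p}\bigl(N_{\alpha,\mathbb{T}}(\mathbf{Z}^{\varepsilon})+1\bigr)\leq c_{1}N_{\alpha,\mathbb{T}}(\mathbf{X}^{\varepsilon})+c_{1}
\end{equation*}
for $c_{1}=c_{1}(p,\gamma,\|G\|_{\mathrm{Lip}^{\gamma-1}},M)$ (using that the homogeneous $p$-variation norm dominates the size of the first-level increment). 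Squaring, $|\cdot|^{2}\leq2c_{1}^{2}N_{\alpha,\mathbb{T}}(\mathbf{X}^{\varepsilon})^{2}+2c_{1}^{2}$, and combining with the Gaussian tail above gives, for every $\eta$ with $2\eta c_{1}^{2}<c_{0}$,
\begin{equation*}
\sup_{t\in[0,\infty)}\sup_{\varepsilon\geq0}E\Bigl\{\exp\Bigl(\eta\,\Bigl|\int_{\mathbb{T}}G(\psi^{\varepsilon})\,d_{x}\psi^{\varepsilon}\Bigr|^{2}\Bigr)\Bigr\}\leq e^{2\eta c_{1}^{2}}\sup_{\varepsilon\geq0}E\bigl[\exp\bigl(2\eta c_{1}^{2}N_{\alpha,\mathbb{T}}(\mathbf{X}^{\varepsilon})^{2}\bigr)\bigr]<\infty,
\end{equation*}
which is the claim.

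The only genuine work in the argument is Lemma~\ref{lemma_unif_1_var_hyperviscosity}, already carried out above; the remaining point that needs care is purely bookkeeping — verifying that the $\alpha$ from Lemma~\ref{lemma_Aa_pos_meas}, the tail exponent from Corollary~\ref{cor_cll_results_single_gaussian}, and the transfer constant from Corollary~\ref{cor_rough_integrals} depend on $\psi^{\varepsilon}$ only through $p$, $\gamma$, $\|G\|_{\mathrm{Lip}^{\gamma-1}}$ and the uniform bound $M$, together with the observation that for $\varepsilon>0$ the Riemann--Stieltjes integral agrees with $\pi_{1}$ of the canonical rough-path lift.
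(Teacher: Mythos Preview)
Your argument is correct and follows exactly the paper's approach: the paper simply cites Lemma~\ref{lemma_unif_1_var_hyperviscosity} together with Proposition~\ref{prop_tail_estimates_sdes_gaussian} (applied with $\rho=q=1$, $K=\sqrt{M}$), whereas you have unpacked the proof of that proposition step by step via Lemma~\ref{lemma_Aa_pos_meas}, Corollary~\ref{cor_cll_results_single_gaussian}, Corollary~\ref{cor_rough_integrals} and Lemma~\ref{lemma_pvar_dom_by_N}, keeping explicit track of the uniformity in~$\varepsilon$.
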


\begin{proof}
By stationarity in time of $\psi ^{\epsilon }\left( \cdot ,t\right) $,
uniformity in $t$ is trivial. Note that the Riemann--Stieltjes integral 
\begin{equation*}
\int_{\mathbb{T}}G\left( \psi ^{\epsilon }\left( x,t\right) \right)
\,d_{x}\psi ^{\epsilon }\left( x,t\right) 
\end{equation*}%
can also be seen as rough integral where the integrator is given by the
"smooth" rough path $\left( \psi ^{\varepsilon },\int \psi ^{\varepsilon
}\otimes d_{x}\psi ^{\varepsilon }\right) $ when $\varepsilon >0$. For $%
\varepsilon =0$, the above integral is a genuine rough integral, the
existence of a canonical lift of $\psi ^{0}\left( \cdot ,t\right) $ to a
geometric rough path is a standard consequence (cf. \cite%
{friz-victoir-2007-gauss, FV10}) of finite $1$-variation of $R^{0}$, the
covariance function of $\psi ^{0}\left( \cdot ,t\right) $. After these
remarks,    
\begin{equation*}
\sup_{\varepsilon \geq 0}E\left\{ \exp \left( C\left\vert \int_{\mathbb{T}%
}G\left( \psi ^{\epsilon }\left( x,t\right) \right) \,d_{x}\psi ^{\epsilon
}\left( x,t\right) \right\vert ^{2}\right) \right\} <\infty 
\end{equation*}%
is an immediate application of\ Lemma \ref{lemma_unif_1_var_hyperviscosity}
and Proposition \ref{prop_tail_estimates_sdes_gaussian}.
\end{proof}

\begin{acknowledgement}
It is a pleasure to thank M. Hairer, H. Weber and T. Lyons for related
discussions. P.K. Friz has received funding from the European Research
Council under the European Union's Seventh Framework Programme
(FP7/2007-2013) / ERC grant agreement nr. 258237. S. Riedel is supported by
an IRTG (Berlin-Zurich) PhD-scholarship.
\end{acknowledgement}

\end{document}